\newcommand\Defn[1]{\textbf{\color{black}#1}}
\newcommand\eps{\varepsilon}
\renewcommand\emptyset{\varnothing}
\renewcommand\phi{\varphi}
\newcommand\Z{\mathbb{Z}}               
\newcommand\Q{\mathbb{Q}}               
\newcommand\R{\mathbb{R}}
\newcommand{\hosymbol}{
\begin{tikzpicture}
\draw[very thin] (0,0) circle (0.05cm);
\filldraw[black] (0,0)--(0,-0.05cm) arc (-90:90:0.05cm) -- (0,0);
\end{tikzpicture}
}
\newcommand\Int[2]{{#1}^{\cap{#2}}}
\newcommand\Mink[2]{{#1}^{+{#2}}}
\newcommand\HO[1]{#1^{\hosymbol}}
\newcommand\half[2]{\mathrm{H}_{#1}{#2}}
\newcommand\ihalf[2]{\mathrm{H}^{#1}{#2}}
\newcommand\Triang{\mathcal{T}}
\newcommand\LL{\Lambda}
\newcommand\PolL{\mathcal{P}(\LL)}
\newcommand\Pol[1]{\mathcal{P}(#1)}
\newcommand\Vals{\mathcal{V}}
\newcommand\vol{V}
\newcommand\dvol{\mathrm{E}}
\newcommand\wSys{{\boldsymbol{\nu}}}
    \newcommand\Hom[1]{\widehat{#1}}
\newcommand\Qcomp[2]{\mathcal{C}_{#1}(#2)}
\newcommand\Euler{\tilde{\chi}}%
\newcommand\ValsCP{\Vals_{\text{\sc CP}}}
\newcommand\ValsWCP{\Vals_{\text{\sc WCP}}}
\newcommand\bVals{\overline{\mathcal{V}}}
\newcommand\bValsCP{\bVals_{\text{\sc CP}}}
\newcommand\bValsM{\bVals_{\text{\sc M}}}
\newcommand\bValsN{\bVals_{\text{\sc +}}}
\DeclareMathOperator*{\relint}{relint}
\DeclareMathOperator*{\aff}{aff}
\DeclareMathOperator*{\conv}{conv}
\newtheorem{thm}{Theorem}[section]
\newtheorem*{thm*}{Theorem}
\newtheorem{cor}[thm]{Corollary}
\newtheorem{lem}[thm]{Lemma}
\newtheorem{prop}[thm]{Proposition}
\newtheorem{conj}{Conjecture}
\newtheorem{quest}{Question}
\theoremstyle{definition}
\newtheorem{example}[thm]{Example}
\title[Combinatorial positivity and a discrete Hadwiger theorem]{Combinatorial positivity of translation-invariant valuations and a
discrete Hadwiger theorem}
\author{Katharina Jochemko}
\address{Department of Mathematics, %
Royal Institute of Technology (KTH), %
Sweden}
\email{jochemko@kth.se}
\author{Raman Sanyal}
\address{FB 12 - Institut f\"ur Mathematik, %
Goethe-Universit\"at Frankfurt, %
Germany}
\email{sanyal@math.uni-frankfurt.de}
\keywords{Ehrhart polynomials, $h^\ast$-vectors, combinatorial positivity,
translation-invariant valuations, discrete Hadwiger theorem, multivariate
reciprocity}
\subjclass[2010]{52B45, 05A10, 52B20, 05A15}
\date{\today}
\begin{document}

\maketitle

\begin{abstract}
We introduce the notion of combinatorial positivity of
translation-invariant valuations on convex polytopes that extends the
nonnegativity of Ehrhart $h^*$-vectors. We give a surprisingly simple
characterization of combinatorially positive valuations that implies
Stanley's nonnegativity and monotonicity of $h^*$-vectors and generalizes
work of Beck et al.\  (2010) from solid-angle polynomials to all
translation-invariant simple valuations.  For general polytopes, this
yields a new characterization of the volume as the unique combinatorially
positive valuation up to scaling. For lattice polytopes our results extend
work of Betke--Kneser (1985) and give a discrete Hadwiger theorem: There
is essentially a unique combinatorially-positive basis for the space of
lattice-invariant valuations.  As byproducts of our investigations, we
prove a multivariate Ehrhart--Macdonald reciprocity and we show
universality of weight valuations studied in Beck et al.\ (2010).
\end{abstract}

\section{Introduction}\label{sec:intro}

A celebrated result of Ehrhart~\cite{ehrhart} states that for a convex lattice
polytope $P = \conv(V)$, $V \subset \Z^d$, the function $ \dvol_P(n)  :=  | nP
\cap \Z^d |$  agrees with a polynomial---the \emph{Ehrhart polynomial} of $P$.
More precisely, there are unique $h^*_0,h^*_1,\dots,h^*_r \in \Z$ with $r = \dim P$ such
that
\begin{equation}\label{eqn:hE-vec}
    \dvol_P(n) \ = \ h^*_0 \binom{n+r}{r} + h^*_1 \binom{n+r-1}{r} + \cdots 
                 + h^*_r \binom{n}{r}
\end{equation}
for all $n \in \Z_{\ge0}$. In the language of generating functions this states
\[
    \sum_{n \ge 0} \dvol_P(n) z^n \ = \ \frac{h^*_0 + h^*_1 z + \cdots + h^*_r
    z^r}{(1-z)^{r+1}}.
\]
Ehrhart polynomials miraculously occur in many areas such as
combinatorics~\cite{BZ, breuerS, stanley74}, commutative algebra and algebraic
geometry~\cite{MillerSturmfels}, and representation
theory~\cite{BerensteinZelevinsky, deLMc}. The question which polynomials can
occur as Ehrhart polynomials is well-studied~\cite{BdLDPS, BetkeMcMullen,
HenkTagami, Stapledon} but wide open. Groundbreaking contributions to that
question are two theorems of Stanley~\cite{Stan80,Stan93}.  Define the
\Defn{$\boldsymbol{h^*}$-vector}\footnote{Also called the $\delta$-vector or
Ehrhart $h$-vector.} of $P$ as $h^*(P) := (h^*_0,h^*_1,\dots,h^*_d)$ where we
set $h^*_i =
0$ for $i > \dim P$. Stanley showed that $h^*$-vectors of lattice polytopes
satisfy a nonnegativity and monotonicity property: If $P \subseteq Q$ are
lattice polytopes, then 
\[
    0 \ \le \ h_i^*(P) \ \le \ h_i^*(Q)
\]
for all $i = 0,\dots, d$. 

McMullen~\cite{mcmullenEuler} generalized Ehrhart's result to
translation-invariant valuations. For now, let \mbox{$\LL \in \{\Z^d,\R^d\}$}
and $\Pol{\LL}$ be the collection of polytopes with vertices in $\LL$. A map
$\phi : \Pol{\LL} \rightarrow \R$ is a \Defn{translation-invariant valuation}
if $\phi(\emptyset) = 0$ and $\phi(P \cup Q) + \phi(P \cap Q) = \phi(P) +
\phi(Q)$ whenever $P, Q, P \cup Q, P\cap Q \in \Pol{\LL}$, and $\phi(t + P) =
\phi(P)$ for all $t \in \LL$. Valuations are a cornerstone of modern discrete
and convex geometry.  The study of valuations invariant under the action of a
group of transformations is an area of active research with beautiful
connections to algebra and combinatorics;
see~\cite{RotaKlain,mcmullenAlgComb}.  For example, for $\LL = \Z^d$, the
\Defn{discrete volume} $\dvol(P) := | P \cap \LL|$ is clearly a
translation-invariant valuation. 

McMullen showed that for every $r$-dimensional polytope $P \in \PolL$, there
are unique $h^\phi_0, h^\phi_1,\dots,h^\phi_r$ such that 
\begin{equation}\label{eqn:h-vec}
    \phi_P(n) \ := \ \phi(nP) \ = \  h^\phi_0 \binom{n+r}{r} + h^\phi_1
    \binom{n+r-1}{r} + \cdots + h^\phi_r \binom{n}{r}
\end{equation}
for all $n \in \Z_{\ge 0}$.
Hence, every translation-invariant valuation $\phi$ comes with the notion of
an \mbox{$h^*$-vector} $h^\phi(P) := (h^\phi_0,h^\phi_1,\dots,h^\phi_d)$ with
$h^\phi_i = 0$ for $i > \dim
P$.  We call a valuation $\phi$ \Defn{combinatorially positive} if
$h^\phi_i(P) \ge 0$ and \Defn{combinatorially monotone} if  $h^\phi_i(P) \le
h^\phi_i(Q)$ whenever $P \subseteq Q$. The natural question that motivated the
research presented in this paper was
\begin{center}
    \it Which valuations are combinatorially positive/monotone?
\end{center}

The Euler characteristic shows that not every translation-invariant valuation
is combinatorially positive.  Beck, Robins, and Sam~\cite{BRS10} showed that
\emph{solid-angle} polynomials are combinatorially positive/monotone and they
gave a sufficient condition for combinatorial positivity/monotonicity of
general \emph{weight valuations}.  Unfortunately, this condition is not
correct; see the discussion after Corollary~\ref{cor:solid_NN}. We will revisit
the construction of weight valuations in Section~\ref{sec:vals} and show that
they are universal for $\LL = \Z^d$. Our main result is the
following simple complete characterization.

\begin{thm*}
    For a translation-invariant valuation $\phi : \PolL \rightarrow \R$, the
    following are equivalent:
    \begin{enumerate}[\rm (i)]
        \item $\phi$ is combinatorially monotone; 
        \item $\phi$ is combinatorially positive; 
        \item For every simplex $\Delta \in \PolL$
        \[
            \phi(\relint(\Delta)) \ := \ \sum_F (-1)^{\dim \Delta-\dim F}
            \phi(F) \ \ge \ 0,
        \]
        where the sum is over all faces $F \subseteq \Delta$.
        \end{enumerate}
\end{thm*}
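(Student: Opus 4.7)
The plan is to establish the cycle $(i)\Rightarrow(ii)\Rightarrow(iii)\Rightarrow(i)$. The first two implications are straightforward: $(i)\Rightarrow(ii)$ follows by specializing monotonicity to $P = \emptyset$, for which all $h^\phi_i$ vanish; and $(ii)\Rightarrow(iii)$ follows from McMullen's translation-invariant Ehrhart--Macdonald reciprocity $\phi(\relint P) = (-1)^{\dim P}\phi_P(-1)$, since expanding $\phi_\Delta(-1) = \sum_i h^\phi_i(\Delta)\binom{r-i-1}{r}$ for an $r$-simplex $\Delta$ and noting that $\binom{r-i-1}{r} = 0$ for $0 \le i < r$ while $\binom{-1}{r} = (-1)^r$ gives $\phi(\relint \Delta) = h^\phi_r(\Delta) \ge 0$.

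For $(iii)\Rightarrow(i)$, given $P \subseteq Q$ in $\PolL$ of dimension $r$, I would choose a triangulation $\Triang$ of $Q$ in which $P$ appears as a subcomplex (for instance, by extending a pulling triangulation of $P$ to one of $Q$). A line shelling from a generic direction then yields a compatible half-open decomposition $Q = \bigsqcup_\tau \tau^{HO}$ over the maximal simplices $\tau \in \Triang$, with $\tau^{HO}$ equal to $\tau$ minus its shelling-visible facets and such that $Q \setminus P = \bigsqcup_{\tau \not\subseteq P}\tau^{HO}$. Scaling by $n$, summing, and multiplying the resulting generating function identity by $(1-z)^{r+1}$ yields
\[
    h^\phi_*(Q;z) - h^\phi_*(P;z) \ = \ \sum_{\tau \not\subseteq P} h^\phi_*(\tau^{HO};z),
\]
where $h^\phi_*(\tau^{HO};z) := (1-z)^{r+1}\sum_{n\ge 0}\phi(n\tau^{HO})z^n$. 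Monotonicity would then follow from the technical claim that each $h^\phi_*(\tau^{HO};z)$ has coefficients expressible as $\mathbb{Z}_{\ge 0}$-linear combinations of $\phi(\relint G)$ over the faces $G \preceq \tau$; combinatorial positivity would follow by specializing $P = \emptyset$.

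This technical claim is the main obstacle. Using the disjoint decomposition $\tau^{HO} = \bigsqcup_{T \supseteq S_\tau}\relint \tau_T$ (where $S_\tau$ is the index set of removed facets and $\tau_T := \conv\{v_i : i \in T\}$) together with Ehrhart--Macdonald reciprocity on each open face, one obtains an expression of $h^\phi_*(\tau^{HO};z)$ as a sum of terms $(1-z)^{r+1-|T|}\cdot z\cdot \mathrm{rev}(h^\phi_*(\tau_T;z))$ whose individual summands have alternating signs through the $(1-z)^{r+1-|T|}$ factors. The combinatorial cancellation that leaves nonnegative coefficients---and expresses them as $\mathbb{Z}_{\ge 0}$-combinations of $\phi(\relint G)$'s---is delicate and forms the heart of the proof. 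I would approach it by induction on $\dim \tau$ and $|S_\tau|$, with base case a single removed facet, using the already-established identity $h^\phi_{\dim F}(F) = \phi(\relint F)$ at the top coefficient of each face to anchor the induction. The argument parallels Stanley's half-open-parallelepiped construction for Ehrhart $h^*$-vectors but must be carried out purely combinatorially for abstract translation-invariant valuations; care is needed to arrange the shelling so that no $\tau^{HO}$ coincides with $\relint \tau$, a situation in which $h^\phi_*(\tau^{HO};z)$ genuinely has mixed signs.
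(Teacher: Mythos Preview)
Your implications $(i)\Rightarrow(ii)$ and $(ii)\Rightarrow(iii)$ are fine (modulo the harmless slip that reciprocity gives $h^\phi_r(\Delta)=\phi(\relint(-\Delta))$, not $\phi(\relint\Delta)$; since $-\Delta$ is again a $\Lambda$-simplex this does not matter). The overall architecture for $(iii)\Rightarrow(i)$---extend a dissection of $P$ to one of $Q$, pass to a half-open decomposition, reduce to $h^\phi_i(\HO{S})\ge 0$ for half-open simplices---is also the paper's.

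The genuine gap is your ``technical claim''. You propose to prove $h^\phi_i(\tau^{HO})\ge 0$ by writing $\tau^{HO}$ as a disjoint union of open faces, applying reciprocity face by face, and then cancelling the alternating signs coming from the $(1-z)^{r+1-|T|}$ factors by an induction you do not carry out. This is the hard part, and the sketch does not make clear why the cancellation succeeds. Your remark that the argument ``must be carried out purely combinatorially for abstract translation-invariant valuations'' is precisely where you miss the paper's key idea: the half-open parallelepiped construction works \emph{verbatim} for an arbitrary $\Lambda$-valuation. Lifting $\HO{S}$ to a half-open cone over $\R^{d+1}$ and tiling by translates of a half-open parallelepiped $\Pi$, one gets directly
\[
    h^\phi_i(\HO{S}) \ = \ \phi(\Pi_i(\HO{S})),
\]
where $\Pi_i$ is the $i$-th (partly open) hypersimplex slice of $\Pi$. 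No reciprocity, no sign cancellation: one simply triangulates the closure $\overline{\Pi}_i$ by $\Lambda$-simplices, keeps those $\sigma$ with $\relint\sigma\subset\Pi_i$, and obtains $\phi(\Pi_i)=\sum_\sigma \phi(\relint\sigma)\ge 0$ by $(iii)$. This replaces your entire inductive cancellation argument with one line. Your worry about $\tau^{HO}=\relint\tau$ is also moot in the paper's setup: with the K\"oppe--Verdoolaege choice of a general point $q$, one always has $I_q(S)\neq[m]$, so no half-open simplex is fully open.

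A second, smaller gap: you only treat $\dim P=\dim Q$. The paper handles $\dim P<\dim Q$ by interpolating a chain of pyramids $P=P^0\subset P^1\subset\cdots\subset P^r\subseteq Q$ and, for the pyramid step, observing that $\Pi_j(\HO{P}_i)$ sits inside $\Pi_j(\HO{Q}_i)$ as a partly-open face, so the same triangulation argument applies to the difference.
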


The combinatorial positivity/monotonicity for the discrete volume
(Corollary~\ref{cor:Ehr_NN}) and solid angles (Corollary~\ref{cor:solid_NN})
are simple consequences and we show that Steiner polynomials are not
combinatorially positive (Example~\ref{ex:Steiner}). In
Section~\ref{sec:weak}, we investigate the relation of combinatorial
positivity/monotonicity to the more common notion of nonnegativity and
monotonicity of a valuation. In particular, we show that combinatorially
positive valuations are necessarily monotone and hence nonnegative. All
implications are strict.

Condition (iii) above is linear in $\phi$. Hence, the combinatorially positive
valuations constitute a pointed convex cone in the vector space of
translation-invariant valuations. In Section~\ref{sec:cones}, we investigate
the nested cones of combinatorially positive, monotone, and nonnegative
valuations. For $\LL = \R^d$, this gives a new characterization of the volume
as the unique, up to scaling, combinatorially positive valuation. For $\LL =
\Z^d$, these cones are more intricate. By results of Betke and
Kneser~\cite{BetkeKneser}, the vector space of valuations on $\Pol{\Z^d}$ that
are invariant under lattice transformations is of dimension $d+1$. We show
that the cone of lattice-invariant valuations that are combinatorially
positive is full-dimensional and simplicial. 

Hadwiger's characterization theorem~\cite{Hadwiger} states that the
coefficients of the Steiner polynomial give a basis for the continuous
rigid-motion invariant valuations on convex bodies that can be characterized
in terms of homogeneity, nonnegativity, and monotonicity, respectively. Betke
and Kneser~\cite{BetkeKneser} proved a discrete analog: a homogeneous basis
for the vector space of lattice-invariant valuations on $\Pol{\Z^d}$ is given
by the coefficients of the Ehrhart polynomial in the monomial basis.
Unfortunately, nonnegativity and monotonicity are genuinely lost. In
Section~\ref{sec:hadwiger} we prove a discrete characterization theorem: Up to
scaling there is a unique combinatorially-positive basis for lattice-invariant
valuations. We close with an explicit descriptions of the three cones of
combinatorially positive, monotone, and nonnegative lattice-invariant
valuations for $d=2$.

While Stanley's approach made use of the strong ties between Ehrhart
polynomials and commutative algebra, our main tool are \emph{half-open}
decompositions introduced by K\"oppe and \mbox{Verdolaage}~\cite{KV}.  We give
a general introduction to translation-invariant valuations in
Section~\ref{sec:vals} and we use half-open decompositions to give a simple
proof of McMullen's result~\eqref{eqn:h-vec} in Section~\ref{sec:halfopen}.
As a byproduct, we recover and extend the famous Ehrhart--Macdonald reciprocity
to multivariate Ehrhart polynomials in Section~\ref{sec:reciprocity}.\\

\textbf{Acknowledgements.} We would like to thank Christian Haase, Martin
Henk, and Monika Ludwig for stimulating discussions.  K.~Jochemko was
supported by a \emph{Hilda Geiringer Scholarship} at the Berlin Mathematical
School. R.~Sanyal was supported by European Research Council under the
European Union's Seventh Framework Programme (FP7/2007-2013)/ERC grant
agreement n$^\mathrm{o}$ 247029 and by the DFG-Collaborative Research Center,
TRR 109 ``Discretization in Geometry and Dynamics''.

\section{Translation-invariant valuations}\label{sec:vals}

Let $\LL \subset \R^d$ be a lattice (i.e.\ discrete subgroup) or a
finite-dimensional vector subspace over a subfield of $\R$.
Following~\cite{mcmullenEuler}, a convex polytope $P \subset \R^d$ with
vertices in $\LL$ is called a \Defn{$\boldsymbol{\LL}$-polytope} and we denote
all $\LL$-polytopes by $\PolL$.  A map $\phi : \PolL \rightarrow G$ into some
abelian group $G$ is a \Defn{valuation} if $\phi(\emptyset) = 0$ and $\phi$
satisfies the valuation property
\[
    \phi(P_1 \cup P_2) \ = \ \phi(P_1) + \phi(P_2) - \phi(P_1 \cap P_2)
\]
for all $P_1,P_2 \in \PolL$ with $P_1 \cup P_2, P_1 \cap P_2 \in \PolL$.  It
can be shown that valuations satisfy the more general
\Defn{inclusion-exclusion property}: For every collection $P_1,P_2,\dots,P_k
\in \PolL$ such that $P = P_1 \cup \cdots \cup P_k \in \PolL$ and $P_I :=
\bigcap_{i \in I} P_i \in \PolL$ for all $I\subseteq [k]$ 
\begin{equation}\label{eqn:IE}
    \phi(P) \ = \ \sum_{\emptyset \neq I \subseteq [k]} (-1)^{|I|-1}
    \phi(P_I).
\end{equation}
For $\LL$ a vector subspace this was first shown by Volland~\cite{volland};
for the case that $\LL$ is a lattice this is due to Betke (unpublished) in
the case of real-valued valuations and by McMullen~\cite{mcmullenIE} in
general.  A valuation $\varphi \colon \PolL \rightarrow G$ is
\Defn{translation-invariant} with respect to $\LL$ and called a
\Defn{$\LL$-valuation} if $\phi(t + P) = \phi(P)$ for all $P \in \PolL$ and $t
\in \LL$. We write $\Vals(\Lambda,G)$ 
for the family of $\LL$-valuations into $G$.

Many well-known valuations can be obtained as integrals over polytopes such
as the $d$-dimensional \Defn{volume} $\vol(P) = \int_P dx$. The volume is an
example of a \Defn{homogeneous} valuation, that is, $\vol(nP) = n^d \vol(P)$
for all $n \ge 0$.  An important valuation that can not be represented as an
integral is the \Defn{Euler characteristic} $\chi$ defined by $\chi(P) = 1$
for all non-empty polytopes $P$. The volume and the Euler characteristic are
$\LL$-valuations with respect to any $\LL$.  If $\LL$ is discrete, the
\Defn{discrete volume} $\dvol(P) := | P \cap \LL|$ is a
$\LL$-valuation.

We mention two particular techniques to manufacture new valuations from old
ones. If $\LL$ is a vector space over a subfield of $\R$, then $P \cap Q \in
\PolL$ whenever $P, Q \in \PolL$, i.e.~$\PolL$ is an intersectional family.
For a fixed valuation $\phi$ and a polytope $Q \in \PolL$, the map
\[
    \Int{\phi}{Q}(P) \ := \ \phi( P \cap Q )
\]
is a valuation.  Observe that $\Int{\phi}{Q}$ is not translation-invariant
unless $Q = \emptyset$.

The \Defn{Minkowski sum} of two $P,Q \in \PolL$ is the
$\LL$-polytope $P+Q = \{p + q : p \in P, q \in Q\}$. For a fixed
$\LL$-polytope $Q$ and valuation $\phi$, we define
\[
    \Mink{\phi}{Q}(P) \ := \ \phi(P + Q)
\]
for $P \in \PolL$. That this defines a valuation follows from the fact that
\begin{align*}
        (K_1\cup K_2)+K_3 &\ = \ (K_1 +K_3)\cup (K_2 +K_3)\\
        (K_1\cap K_2)+K_3 &\ = \ (K_1 +K_3)\cap (K_2 +K_3)
\end{align*}
for any convex bodies $K_1,K_2,K_3 \subset \R^d$;
cf.~\cite[Section~3.1]{Schneider}. Observe that $\Mink{\phi}{Q}$ is 
translation-invariant whenever $\phi$ is.

A result that we alluded to in the introduction regards the behavior of
$\LL$-valuations with respect to dilations. It was first shown for the
discrete volume by Ehrhart~\cite{ehrhart} and then for all
\mbox{$\LL$-valuations} by McMullen~\cite{mcmullenEuler}.

\begin{thm}\label{thm:phi_poly}
    Let $\phi : \PolL \rightarrow G$ be a $\LL$-valuation. For every
    $r$-dimensional $\LL$-polytope $P \subset \R^d$ there are unique 
    $h^\phi_0, h^\phi_1, \dots, h^\phi_r \in G$ such that 
    \[
        \phi_P(n) \ := \ \phi(nP) \ = \ 
     h^\phi_0 \binom{n+r}{r} + h^\phi_1 \binom{n+r-1}{r} + \cdots 
                 + h^\phi_r \binom{n}{r}.
     \]
\end{thm}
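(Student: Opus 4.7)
My plan is a single induction on $r = \dim P$. Within each dimension, I first establish the theorem for simplices via a Minkowski-twist recursion on a facet, and then reduce the general case to simplices via triangulation and inclusion--exclusion. Uniqueness follows from a linear algebra observation about the basis.

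\textbf{Uniqueness.} The $r+1$ polynomials $\binom{n+r-i}{r}$, $i=0,\ldots,r$, form a basis of the space of polynomials in $n$ of degree at most $r$: the matrix with entries $\binom{j+r-i}{r}$ for $i,j\in\{0,\ldots,r\}$ is lower-triangular with unit diagonal, hence has integer inverse. Thus whenever $\phi_P(n)$ agrees with a polynomial of degree at most $r$ with values in $G$, the coefficients $h_i^\phi$ are uniquely determined and lie in $G$, recoverable as integer combinations of $\phi_P(0),\ldots,\phi_P(r)$.

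\textbf{Existence, simplex case.} The base case $r = 0$ is immediate since $P = \{v\}$ forces $\phi(nP) = \phi(\{0\})$ by translation invariance. Inductively, assume the theorem for all $\LL$-valuations and all $\LL$-polytopes of dimension less than $r$, and let $\Delta = \conv(v_0,\ldots,v_r)$ be an $r$-dimensional $\LL$-simplex. By translation invariance we may assume $v_0 = 0$. Writing $F = \conv(v_1,\ldots,v_r)$ and parametrizing points as $\sum_{i\geq 1} t_i v_i$, a direct check yields
\[
k\Delta = \bigl\{{\textstyle\sum} t_i v_i : t_i \geq 0,\ {\textstyle\sum} t_i \leq k\bigr\}, \qquad kF + \Delta = \bigl\{{\textstyle\sum} t_i v_i : t_i \geq 0,\ k \leq {\textstyle\sum} t_i \leq k+1\bigr\},
\]
and hence $(k+1)\Delta = k\Delta \cup (kF + \Delta)$ with $k\Delta \cap (kF + \Delta) = kF$. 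The valuation property then yields the telescoping identity
\[
\phi((k+1)\Delta) - \phi(k\Delta) \ = \ \phi(kF + \Delta) - \phi(kF) \ = \ \Mink{\phi}{\Delta}(kF) - \phi(kF).
\]
Because $\Mink{\phi}{\Delta}$ is itself an $\LL$-valuation (as observed earlier) and $\dim F = r - 1$, the inductive hypothesis applied separately to $\phi$ and to $\Mink{\phi}{\Delta}$ on $F$ shows that the right-hand side is a polynomial in $k$ of degree at most $r - 1$. Summing gives $\phi(n\Delta) = \phi(\{0\}) + \sum_{k=0}^{n-1}\bigl(\Mink{\phi}{\Delta}(kF) - \phi(kF)\bigr)$, a polynomial in $n$ of degree at most $r$.

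\textbf{Existence, general case; main obstacle.} For a general $P$ of dimension $r$, choose a triangulation into $\LL$-simplices $\Delta_1,\ldots,\Delta_m$ of dimensions at most $r$ with all intersections again $\LL$-simplices. By the inclusion--exclusion formula~\eqref{eqn:IE}, $\phi(nP) = \sum_{\varnothing \neq I}(-1)^{|I|-1}\phi(n\Delta_I)$; each summand is polynomial in $n$ of degree at most $r$ (by the simplex case in dimension $r$, or by the inductive hypothesis in lower dimensions), so $\phi(nP)$ is too. The main obstacle is finding the right decomposition of $(k+1)\Delta$: the identity $(k+1)\Delta = k\Delta \cup (kF + \Delta)$ with intersection $kF$ is what unlocks the recursion, allowing the Minkowski-twisted valuation $\Mink{\phi}{\Delta}$ to lower the dimension and drive the induction.
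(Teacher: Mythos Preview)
Your argument is correct (modulo a harmless slip: the matrix $\bigl(\binom{j+r-i}{r}\bigr)_{i,j}$ is \emph{upper}-triangular with unit diagonal, not lower). However, it takes a genuinely different route from the paper. The paper dissects $P$ into $\LL$-simplices and then uses the \emph{half-open decomposition} of Lemma~\ref{lem:ho-decomp} so that $\phi(P)$ becomes a sum of $\phi$-values on half-open simplices, with no overlaps and hence no inclusion--exclusion at all. For each half-open simplex, Proposition~\ref{prop:ho-poly} lifts to a cone in $\R^{d+1}$ and tiles by translates of a fundamental parallelepiped, yielding the explicit formula~\eqref{eqn:halfopenpoly}: $h_i^\phi(\HO{S}) = \phi(\Pi_i(\HO{S}))$ for certain partly-open hypersimplices $\Pi_i$. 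Your approach instead runs an induction on $\dim P$, handles simplices by the neat recursion $(k+1)\Delta = k\Delta \cup (kF+\Delta)$ and the Minkowski-twisted valuation $\Mink{\phi}{\Delta}$ on the facet $F$, and then invokes inclusion--exclusion~\eqref{eqn:IE} for the general case. Your proof is more elementary in that it avoids half-open polytopes and the cone construction, but it leans on the nontrivial inclusion--exclusion theorem (Volland/Betke/McMullen) that the paper's method is expressly designed to bypass. More importantly, the paper's proof is not just establishing polynomiality: the explicit identification $h_i^\phi = \phi(\Pi_i)$ of Corollary~\ref{cor:h-hyper} is the engine behind the combinatorial-positivity results (Theorems~\ref{thm:h-nonneg} and~\ref{thm:h-weaknonneg}), and your recursion does not produce such a closed form for the $h^\phi_i$.
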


That is, $\phi_P(n)$ agrees with a polynomial for all $n \ge 0$.  We define the
\Defn{$\mathbf{h^*}$-vector} of $\phi$ and $P$ as the vector of coefficients
$h^\phi(P) := (h^\phi_0,\dots,h^\phi_d)$ with $h^\phi_i =  0$ for $i > \dim P$.  We will give
a simple proof of this result in Section~\ref{sec:halfopen} whose inner
workings we will need for our main results.

We define the \Defn{Steiner valuation} of a polytope $P \subset \R^d$ as
\[
    S(P) \ := \ \Mink{\vol}{B_d}(P) \ = \ \vol(P + B_d).
\]
Using Theorem~\ref{thm:phi_poly}, we obtain the \Defn{Steiner polynomial} 
\begin{equation}\label{eqn:Steinerpoly}
    S_P(n) \ := \ \vol(nP + B_d) \ = \ \sum_{i=0}^d \binom{d}{i}W_{d-i}(P)\,
    n^i.
\end{equation}
The coefficient $W_i(P)$, called the $i$-th \Defn{quermassintegral}, is a homogeneous valuation of degree $d-i$; see~\cite[Sect.~6.2]{gruber2007convex}.
The Steiner valuation is invariant under rigid motions and so are the
quermassintegrals.  Hadwiger's characterization theorem~\cite{Hadwiger} states
that for any real-valued valuation $\phi$ on convex bodies in $\R^d$ that is
continuous and invariant under rigid motions, there are unique
$\alpha_0,\dots,\alpha_d\in \R$ such that 
\[
    \phi \ = \ \alpha_0 W_0 + \cdots + \alpha_d W_d.
\]

Let $\LL$ be a lattice. A less well-known $\LL$-valuation is the solid-angle
valuation. The solid angle of a polytope $P$ at the origin is defined as
\[
    \omega(P) \ := \ \lim_{\eps \rightarrow 0} \frac{V(\eps B_d \cap P)}
    {V(\eps B_d)},
\]
where $B_d$ is the unit ball centered at the origin.  It is easy to see that
$\omega$ is a valuation.  The \Defn{solid-angle valuation} of $P \in \PolL$ is
defined as
\[
    A(P) \ := \ \sum_{p \in \LL} \omega(-p+P)
\]
By construction, this is a $\LL$-valuation and an example of a \Defn{simple}
valuation: $A(P) = 0$ whenever $\dim P < d$. 

Beck, Robins, and Sam~\cite{BRS10} considered a class of $\LL$-valuations
that generalize the idea underlying the solid-angle valuation. Slightly
rectifying the definitions in~\cite{BRS10}, a system of weights $\wSys =
(\nu_p)$ is a choice of a valuation $\nu_p : \Pol{\LL} \rightarrow G$ for
every lattice point $p \in \LL$ such that
\[
    N_\wSys(P) \ := \ \sum_{p \in \LL} \nu_p(P) 
\]
is defined for all $P \in \Pol{\LL}$.  Certainly a sufficient condition for
this is that $\nu_p$ has \emph{bounded support}, i.e.~$\nu_p(P) = 0$ whenever
$P \cap (R \cdot B_d-p) = \emptyset$ for some $R = R(\nu_p) > 0$.  We call
$N_\wSys$ a
\Defn{weight valuation}. If we choose $\nu_p(P) := \phi(-p + P)$ for some
fixed valuation $\phi$, then $N_\nu$ is a $\LL$-valuation. This generalizes
the solid-angle valuation for $\nu_p(P) = \omega(-p+P)$ as well as the
discrete volume for $\nu_p(P) = 1$ if and only if $p \in P$. For other
valuations it is in general not clear if they can be represented by weight
valuations.

\begin{example}[Euler characteristic]\label{ex:euler}
    Let $t \in \R^d$ be an irrational vector. For a non-empty lattice polytope
    $P \in \Pol{\Z^d}$ there is then always a unique vertex $v_t \in Q$ such
    that $\langle t, x \rangle \le \langle t, v_t \rangle$ for all $x \in Q$.
    Let $\nu_p$ be the function defined by $\nu_p(P) = 1$ if $v_t = p$ and
    zero otherwise.  In particular, $\nu_p(\emptyset) = 0$.  It is easy to
    check that this is a valuation and that $N_\nu$ is the Euler
    characteristic.
\end{example}

Before we ponder the general case, let us consider one more example.

\begin{example}[Volume]\label{ex:weight_vol}
    We write $C_d = [0,1]^d \subset \R^d$ for the standard cube and we define
    $\nu := \Int{\vol}{C_d}$. The induced weights are then 
    \[
        \nu_p(P) \ = \ \vol(P \cap (p + C_d))
    \]
    for $p \in \Z^d$. Since $\vol$ is a simple valuation, we get
    \[
        N_\wSys(P) \ = \ \vol \Bigl( \bigcup\{ P \cap (p + C_d) : p \in \Z^d\}
        \Bigr) = \vol(P).
    \]
\end{example}

The example already hints at the fact that general valuations on rational
polytopes can be expressed as weight valuations. The following result is
phrased in terms of the standard lattice $\LL = \Z^d$ but, of course, can be
adapted to any lattice $\LL$.

\begin{prop}\label{prop:weight_rat}
    Let $\phi : \Pol{\Q^d} \rightarrow G$ be a valuation on rational
    polytopes.  Then there is a system of weights $\wSys$ such that 
    $\phi\big|_{\Pol{\Z^d}} =
    N_\wSys$. 
\end{prop}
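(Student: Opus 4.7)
The approach generalizes Example~\ref{ex:weight_vol}: partition $\R^d$ by translates of the half-open cube $\HO{C_d} := [0,1)^d$, and set $\nu_p(P)$ to be $\phi$ evaluated on the ``half-open piece'' $P \cap (p + \HO{C_d})$. To make this well-defined using only values of $\phi$ on closed rational polytopes, I would expand it via inclusion--exclusion on the excluded upper facets. Explicitly, let $C_p := p + C_d$ and $F_i^p := \{x \in C_p : x_i = p_i + 1\}$ for $i = 1, \ldots, d$, and define
\[
    \nu_p(P) \ := \ \sum_{I \subseteq [d]} (-1)^{|I|}\, \phi\!\Bigl(P \cap C_p \cap \bigcap_{i \in I} F_i^p\Bigr),
\]
with the empty intersection ($I = \emptyset$) interpreted as $P \cap C_p$. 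Each argument is a rational polytope, so $\phi$ applies by hypothesis.

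The next step is to verify that $\nu_p$ is a valuation on $\Pol{\Z^d}$ and that $\wSys = (\nu_p)_{p \in \Z^d}$ has bounded support. Valuativity follows term-by-term from that of $\phi$ on rational polytopes, using distributivity of intersection over union. For bounded support, $\nu_p(P) = 0$ whenever $P \cap C_p = \emptyset$, which rules out all but finitely many $p$.

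Finally, I would prove $N_\wSys(P) = \phi(P)$ for $P \in \Pol{\Z^d}$. Let $S := \{p : P \cap C_p \neq \emptyset\}$, a finite set. Inclusion--exclusion~\eqref{eqn:IE} applied to the closed cover $\{P \cap C_p\}_{p \in S}$ of $P$ gives
\[
    \phi(P) \ = \ \sum_{\emptyset \neq J \subseteq S} (-1)^{|J|-1}\, \phi\!\Bigl(P \cap \bigcap_{p \in J} C_p\Bigr).
\]
Each intersection $\bigcap_{p \in J} C_p$ is a face of the cubical grid, and regrouping the alternating sum over grid faces yields exactly $\sum_p \nu_p(P)$; equivalently, this encodes the additivity of $\phi$ across the half-open partition $P = \bigsqcup_{p} (P \cap (p + \HO{C_d}))$, the standard half-open decomposition identity developed in Section~\ref{sec:halfopen}.

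The main obstacle is this last combinatorial identity---matching the inclusion--exclusion expansion of $\phi(P)$ with the double sum $\sum_p \nu_p(P)$---which reduces to bookkeeping over the face lattice of the unit cube once the half-open decomposition machinery is in place.
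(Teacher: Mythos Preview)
Your approach is essentially the same as the paper's: define $\nu_p$ as $\phi$ applied to the half-open cube piece $P \cap (p + \HO{C_d})$, expanded via inclusion--exclusion over the missing facets of the unit cube, and then observe that summing over $p$ recovers $\phi(P)$ because the half-open cubes partition $\R^d$. The only cosmetic differences are that the paper removes the \emph{lower} facets (using $(0,1]^d$ rather than $[0,1)^d$) and states the final identity directly from the partition rather than regrouping an inclusion--exclusion over the closed cover; your reference to Section~\ref{sec:halfopen} is not needed here, since the requisite bookkeeping is just the inclusion--exclusion property~\eqref{eqn:IE} for the cover by closed cubes.
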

\begin{proof}
    Let $C_d = [0,1]^d$ be the standard cube and set $F_i := C_d \cap \{ x_i =
    0 \}$ for $i=1,\dots,d$. The set $H_d := C_d \setminus (F_1 \cup \cdots
    \cup F_d) = (0,1]^d$ is the \emph{half-open} standard cube. It is clear
    that $\{p + H_d\}_{p \in \Z^d}$ is a partition of $\R^d$.  Let us define the
    valuation
    \[
        \Int{\phi}{H_d} \ = \ \sum_{I \subseteq [d]} (-1)^{|I|}
        \Int{\phi}{F_I},
    \]
    where $F_I := \bigcap\{ F_i : i \in I\}$ and $F_\emptyset := C_d$. Then
    \[
        \sum_{p \in \Z^d} \phi( P \cap (p + H_d) ) \ = \
        \phi\Bigl( P \cap \biguplus\{p + H_d : p \in \Z^d\}\Bigr) = \phi(P),
     \]
     which proves the claim with $\nu _p(P)= \phi( P \cap (p + H_d) )$.
\end{proof}

Note that this result does not require $\phi$ to be invariant with respect to
translations. The main result of this section is a representation theorem for
$\Z^d$-valuations in terms of weight valuations.

\begin{thm}\label{thm:lat_weight_rep}
    Let $\phi : \Pol{\Z^d} \rightarrow G$ be a $\Z^d$-valuation taking values
    in divisible abelian group $G$. Then $\phi = N_\wSys$ for some system of
    weights $\wSys$.
\end{thm}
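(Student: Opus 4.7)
The plan is to reduce Theorem~\ref{thm:lat_weight_rep} to Proposition~\ref{prop:weight_rat} by first extending $\phi$ from $\Pol{\Z^d}$ to $\Pol{\Q^d}$. If I can produce a valuation $\tilde\phi \colon \Pol{\Q^d} \to G$ with $\tilde\phi|_{\Pol{\Z^d}} = \phi$, then the construction in the proof of Proposition~\ref{prop:weight_rat} supplies weights $\nu_p(P) := \tilde\phi(P \cap (p + H_d))$ satisfying $N_\wSys(P) = \tilde\phi(P) = \phi(P)$ for all $P \in \Pol{\Z^d}$.

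To construct $\tilde\phi$ I would combine Theorem~\ref{thm:phi_poly} with the divisibility of $G$. For $Q \in \Pol{\Q^d}$ of dimension $r$, pick $k \in \Z_{>0}$ with $kQ \in \Pol{\Z^d}$. Theorem~\ref{thm:phi_poly} provides unique coefficients $h_0^\phi(kQ), \ldots, h_r^\phi(kQ) \in G$ with
\[
    \phi(nkQ) \ = \ \sum_{i=0}^r h_i^\phi(kQ) \binom{n + r - i}{r}
    \qquad \text{for all } n \in \Z_{\geq 0},
\]
and since $G$ is divisible the formal evaluation of the right-hand side at $n = 1/k$ defines an element $\tilde\phi(Q) \in G$. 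Independence of $k$ is the first check: if $k' = mk$ is another valid choice, then the polynomials $\phi_{k'Q}(n)$ and $\phi_{kQ}(mn)$ in $G[n]$ agree at every $n \in \Z_{\geq 0}$ and hence agree as polynomials by the uniqueness in Theorem~\ref{thm:phi_poly}, yielding the same evaluation at $1/k' = 1/(mk)$.

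The remaining checks are that $\tilde\phi$ is a valuation and that $\tilde\phi|_{\Pol{\Z^d}} = \phi$; the latter is immediate upon taking $k=1$. For the valuation identity, given $Q_1, Q_2$ with $Q_1 \cup Q_2, Q_1 \cap Q_2 \in \Pol{\Q^d}$, I would choose a common $k$ so that all four $k$-dilates are lattice polytopes; the valuation identity for $\phi$ applied to the $nk$-dilates for every $n \in \Z_{\geq 0}$ is equivalent to a polynomial identity in $n$, which by the same uniqueness argument holds in $G[n]$ and therefore transfers to $\tilde\phi$ upon evaluation at $n = 1/k$. The main obstacle throughout is precisely this passage from polynomial identities on $\Z_{\geq 0}$ to polynomial identities in $G[n]$ and finally to evaluations at $1/k$, which rests on the uniqueness in McMullen's theorem and on the divisibility of $G$ to make the $1/k$-evaluation meaningful; once $\tilde\phi$ is in hand, Proposition~\ref{prop:weight_rat} finishes the proof.
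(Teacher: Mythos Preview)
Your proposal is correct and follows essentially the same route as the paper: extend $\phi$ to a valuation on $\Pol{\Q^d}$ using McMullen's polynomiality together with the divisibility of $G$, and then invoke Proposition~\ref{prop:weight_rat}. The only cosmetic difference is that the paper first passes to the monomial basis and extends each homogeneous component via $\bar\phi_i(Q) := \tfrac{1}{\ell^i}\phi_i(\ell Q)$, whereas you keep the polynomial intact and evaluate at $n = 1/k$; the two constructions produce the same extension.
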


This result is a direct consequence of Proposition~\ref{prop:weight_rat} and
the following lemma which is of interest in its own right.

\begin{lem}\label{lem:ext_trans}
    Let $\phi : \Pol{\Z^d} \rightarrow G$ be a $\Z^d$-valuation taking values
    in a divisible abelian group.  Then there is a valuation $\bar \phi :
    \Pol{\Q^d} \rightarrow G$ that is invariant under translations by $\Z^d$
    and \mbox{$\bar \phi(P) = \phi(P)$} for all lattice polytopes $P \in \Pol{\Z^d}$.
\end{lem}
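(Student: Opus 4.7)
The plan is to define $\bar\phi$ by rational extrapolation of the dilation polynomial from Theorem~\ref{thm:phi_poly}. Given $P \in \Pol{\Q^d}$ with $r = \dim P$, choose a positive integer $N$ with $Q := NP \in \Pol{\Z^d}$; such an $N$ exists because $P$ has rational vertices (any common denominator works). By Theorem~\ref{thm:phi_poly} applied to the lattice polytope $Q$, there are unique $h_0,\dots,h_r \in G$ with
\[
    \phi(nQ) \ = \ \sum_{i=0}^r h_i \binom{n+r-i}{r} \qquad \text{for all } n \in \Z_{\ge 0}.
\]
I would then \emph{define}
\[
    \bar\phi(P) \ := \ \sum_{i=0}^r \binom{\tfrac{1}{N} + r - i}{r} \, h_i,
\]
where each rational scalar is to be interpreted in $G$ via divisibility.

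The technical crux is making that rational scalar action coherent: divisibility guarantees that for every $h \in G$ and $k \in \Z_{\ge 1}$ there is some $b \in G$ with $kb = h$, but $b$ is not unique when $G$ has torsion. Before setting up the definition I would fix, once and for all, a splitting $G \cong G_{\mathrm{tors}} \oplus F$ with $F$ torsion-free (hence a $\Q$-vector space) and a coherent system of $k$-th divisions on $G_{\mathrm{tors}}$. With this data in place, $\sum_i \binom{1/N + r - i}{r}\, h_i$ is an unambiguous element of $G$. This is the main obstacle; the remaining verifications are routine polynomial manipulations.

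Four verifications then need to be carried out:
\begin{enumerate}[\rm(a)]
    \item \emph{Independence from $N$}: if $N$ and $N'$ both work, put $L = \mathrm{lcm}(N,N')$; the identity $\phi_{LP}(n) = \phi_{NP}(nL/N)$ on $\Z_{\ge 0}$ lifts to an identity of polynomials with $G$-coefficients, so evaluation at $t = 1/L$ agrees with evaluation of $\phi_{NP}$ at $1/N$, and symmetrically with $N'$.
    \item \emph{Extension}: when $P \in \Pol{\Z^d}$, take $N=1$ and observe $\phi_P(1) = \phi(P)$.
    \item \emph{Valuation property}: given $P_1,P_2 \in \Pol{\Q^d}$ with $P_1 \cup P_2, P_1 \cap P_2 \in \Pol{\Q^d}$, choose a common $N$ scaling all four to lattice polytopes; the valuation identity for $\phi$ applied to their $n$-dilates yields a polynomial identity in $n$ over $G$, and evaluation at $1/N$ transfers it to $\bar\phi$.
    \item \emph{$\Z^d$-invariance}: for $t \in \Z^d$, $N(P+t) = NP + Nt$ with $Nt \in \Z^d$, so $\Z^d$-invariance of $\phi$ forces $\phi_{N(P+t)} = \phi_{NP}$ as polynomials, and hence $\bar\phi(P+t) = \bar\phi(P)$.
\end{enumerate}
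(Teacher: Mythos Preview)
Your approach is essentially the same as the paper's: both extend $\phi$ to $\Pol{\Q^d}$ by evaluating the dilation polynomial of Theorem~\ref{thm:phi_poly} at the rational argument $1/N$, using divisibility of $G$ to make sense of the resulting coefficients. The only cosmetic difference is that the paper first passes to the monomial basis $\phi_P(n)=\sum_i \phi_i(P)\,n^i$ and extends each homogeneous component via $\bar\phi_i(Q):=\tfrac{1}{\ell^i}\phi_i(\ell Q)$, which makes the independence-of-$N$ step a one-line consequence of homogeneity rather than a polynomial-identity argument; your treatment of the torsion ambiguity is in fact more explicit than the paper's.
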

\begin{proof}
    Since $G$ is divisible, we can rewrite Theorem~\ref{thm:phi_poly} as
    \[
        \phi_P(n) \ = \ \phi_d(P) n^d + \cdots + \phi_0(P)
    \]
    for all $P\in \Pol{\Z^d}$. The coefficients $\phi_i(P)$ are 
    $\Z^d$-valuations homogeneous of degree $i$. It is sufficient to show that
    we can extend $\phi_i$ to rational polytopes.

    For $Q \in \Pol{\Q^d}$, let $\ell \in \Z_{>0}$ such that $\ell Q \in
    \Pol{\Z^d}$. We define
    \[
        \bar \phi_i (Q) \ := \ \tfrac{1}{\ell^i}\phi_i (\ell Q).
    \]
    To see that $\bar \phi_i$ is well-defined, observe that $\ell Q \in
    \Pol{\Z^d}$ if and only if $\ell=k\ell_0$ where $\ell_0$ is the least
    common multiple of the denominators of the vertex coordinates of $Q$ and
    $k \in \Z_{\ge 1}$.  Hence, by homogeneity
    \[
        \phi_i(\ell Q) = k^i \phi_i(\ell_0 Q).
    \]
    It remains to show that $\bar \phi_i$ satisfies the valuation property. Let
    $Q,Q'$ be rational polytopes such that $Q \cup Q' \in \Pol{\Q^d}$. Choose
    $\ell > 0$ such that $\ell Q, \ell Q', \ell (Q \cup Q')$ and $\ell(Q \cap
    Q')$ are lattice polytopes. Then
    \[
        \ell^i \bar \phi_i(Q \cup Q')  = 
        \phi_i(\ell(Q \cup Q'))
         =  \phi_i(\ell Q) + \phi_i(\ell Q') - \phi_i(\ell (Q \cap Q'))
         =  \ell^i \bar \phi_i(Q) + \ell^i \bar \phi_i(Q') - \ell^i \bar
\phi_i(Q \cap
        Q')
    \]
    which finishes the proof.
\end{proof}

Note that Lemma~\ref{lem:ext_trans} not necessarily yields the extension one
would expect: The discrete volume $\dvol$ clearly extends to rational
polytopes. However, the following example shows that this is not the extension
furnished by Lemma~\ref{lem:ext_trans}.

\begin{example}
    Consider the discrete volume $\dvol$ in dimension $d=1$. For lattice
    polytopes $P \subset \R$, the polynomial expansion is given by
    \[
        \dvol_P(n) \ = \ V(P)n + \chi(P),
    \]
    where $V$ is the $1$-dimensional volume. By Lemma~\ref{lem:ext_trans},
    there is an extension of $\dvol$ to rational segments and we compute
    \[
        \bar \dvol([0,\tfrac{1}{3}]) \ = \ \tfrac{1}{3}V(3[0,\tfrac{1}{3}]) +
        \chi(3[0,\tfrac{1}{3}]) \ = \ \tfrac{1}{3} + 1 \ \neq \ | Q \cap \Z |.
    \]
\end{example}

Since every abelian group $G$ can be embedded into a divisible group
$\overline{G}$, Theorem \ref{thm:lat_weight_rep} can be extended to abelian
groups if we allow the weights to take values in $\overline{G}$. However, the
assumption that $\phi$ is translation-invariant with respect to $\Z^d$ is
necessary for our proof. 

\begin{quest}
    Can Lemma~\ref{lem:ext_trans} be extended to general valuations $\phi :
    \Pol{\Z^d} \rightarrow G$?
\end{quest}

\section{Half-open decompositions and $h^{\ast}$-vectors}\label{sec:halfopen}

For a polytope $P \in \PolL$ and a valuation, we defined in the introduction 
\begin{equation}\label{eqn:phi_relint}
    \phi(\relint(P)) \ := \ \sum_{F} (-1)^{\dim P - \dim F} \phi(F),
\end{equation}
where the sum is over all faces $F$ of $P$. Using M\"obius inversion, this
definition is consistent with 
\begin{equation}\label{eqn:sumrelint}
    \phi(P) \ = \ \sum_{F} \phi(\relint F).
\end{equation}

In this section we will extend $\phi$ to half-open polytopes that allows us to
use half-open decompositions of polytopes for a proof of
Theorem~\ref{thm:phi_poly} that avoids inclusion-exclusion of any sort.

Let $P \subset \R^d$ be a full-dimensional polytope with facets
$F_1,\dots,F_m$. A point $q \in \R^d$ is \Defn{general} with respect to $P$ if
$q$ is not contained in any facet-defining hyperplane. The point $q$ is
\Defn{beneath} or \Defn{beyond} the facet $F_i$ if $q$ and $P$ are on the same
side or, respectively, on different sides of the facet hyperplane $\aff(F_i)$.
We write $I_q(P) \subset [m]$ for the set indexing the facets for which $q$ is
beyond. Since we assume $P$ to be full-dimensional, we always have $I_q(P)
\neq [m]$. A \Defn{half-open} polytope is a set of the form
\[
    \half{q}{P} \ := \ P \setminus \bigcup \{ F_i : i \in I_q(P) \}.
\]
We will write $\HO{P}$ for a half-open polytope $\half{q}{P}$ obtained
from $P$ with respect to some general point $q$.

Our interest in half-open polytopes stems from the following result of K\"oppe
and Verdolaage~\cite{KV} that is already implicit in the works of Stanley and
Ehrhart; see~\cite{stanley74}.  A \Defn{dissection} of a polytope $P$
is a presentation $P = P_1 \cup \cdots \cup P_k$, where each $P_i$ is a
polytope of dimension $\dim P$ and $\dim (P_i \cap P_j) < d$ for all $i \neq
j$.

\begin{lem}[{\cite[Thm.~3]{KV}}]\label{lem:ho-decomp}
    Let $P = P_1 \cup P_2 \cup \cdots \cup P_k$ be a dissection. If $q$ is a
    point that is general with respect to $P_i$ for all $i=1,\dots,k$, then
    \[
        \half{q}{P} \ = \ 
        \half{q}{P_1} \uplus
        \half{q}{P_2} \uplus
        \cdots \uplus
        \half{q}{P_k}.
    \]
\end{lem}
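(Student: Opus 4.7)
The strategy is to assign to each $x \in P$ a canonical piece of the dissection based on the direction from $q$ to $x$. Concretely, I introduce the parametrized segment $\gamma_x(t) := (1-t)q + tx$ and will show that (i) for $t$ slightly below $1$, $\gamma_x(t)$ lies in the interior of a \emph{unique} $P_i$, and (ii) this $P_i$ is precisely the one whose half-open part $\half{q}{P_i}$ contains $x$.

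First I would prove a segment criterion: for any full-dimensional polytope $Q$ such that $q$ is general with respect to $Q$, and any $x \in Q$,
\[
    x \in \half{q}{Q} \quad\Longleftrightarrow\quad \gamma_x(t) \in Q
    \text{ for all } t \in (1-\delta, 1) \text{ with } \delta>0 \text{ small}.
\]
This reduces to a facet-by-facet check: for a facet $F$ of $Q$ with defining inequality $a \cdot y \le b$, one has $a \cdot \gamma_x(t) = (1-t)(a\cdot q) + t(a\cdot x)$. If $x \notin F$, strict inequality at $x$ persists near $t=1$ by continuity; if $x \in F$, so that $a \cdot x = b$, then $a \cdot \gamma_x(t) \le b$ for $t<1$ close to $1$ iff $a \cdot q \le b$, i.e.\ iff $q$ is beneath $F$. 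Hence $\gamma_x(t) \in Q$ near $t=1$ exactly when $q$ is beneath every facet of $Q$ containing $x$, which is the definition of $\half{q}{Q}$.

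Next, every facet hyperplane of $P$ is a facet hyperplane of some $P_i$ (each facet of $P$ is tiled by outward-pointing facets of certain $P_i$'s, all lying in the same affine hull), so the hypothesis on $q$ forces $q$ to be general with respect to $P$ as well; the segment criterion therefore applies to $P$ itself. Moreover, since $q$ lies on no facet hyperplane of any $P_i$, the line $\{\gamma_x(t) : t \in \R\}$ is not contained in any of these finitely many hyperplanes, so it meets their union in only finitely many values of $t$. For $t$ slightly below $1$ outside this exceptional set, $\gamma_x(t)$ avoids the entire facet-hyperplane arrangement; if in addition $\gamma_x(t) \in P$, then $\gamma_x(t)$ lies in $\mathrm{int}(P_i)$ for exactly one $i$, since two distinct pieces of the dissection meet only inside facet hyperplanes.

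Assembling: for $x \in \half{q}{P}$, the criterion applied to $P$ gives $\gamma_x(t) \in P$ for $t$ just below $1$; the preceding paragraph pins this down to a unique $\mathrm{int}(P_{i(x)})$; and the criterion applied to $P_{i(x)}$ shows $x \in \half{q}{P_{i(x)}}$, while failing for all other indices. Conversely, for $x \in P \setminus \half{q}{P}$ the criterion for $P$ yields $\gamma_x(t) \notin P$ near $t=1$, hence $\gamma_x(t) \notin P_i$ for any $i$, and $x$ belongs to none of the $\half{q}{P_i}$. This delivers exactly the disjoint union claimed by the lemma. The main obstacle is the careful verification of the segment criterion—translating the beneath/beyond dichotomy into the pointwise behavior of $\gamma_x$ near $t=1$; once that equivalence is in place the partition statement follows from the general position of $q$ with essentially no further work.
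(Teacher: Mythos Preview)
Your proof is correct and follows essentially the same idea as the paper's: both hinge on moving from $x$ a tiny step toward $q$ along the segment $[q,x]$ and identifying the unique $P_i$ whose interior contains that perturbed point. Your segment criterion makes explicit what the paper uses implicitly (the paper simply asserts that if $p' = p + \eps(q-p)$ lies in the interior of $P_i$ then $p \in \half{q}{P_i}$, and that a separating facet hyperplane rules out $\half{q}{P_j}$ for $j \neq i$), and you also spell out the reverse inclusion $\bigcup_i \half{q}{P_i} \subseteq \half{q}{P}$, which the paper leaves to the reader.
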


For sake of completeness we include a short proof of this result.

\begin{proof}
    We only need to show that for every $p \in \half{q}{P}$ there is a unique
    $P_i$ with $p \in \half{q}{P_i}$. There is a $P_i$ such that for every
    $\eps > 0$ sufficiently small, the point $p' := p + \eps(q-p)$ is in the
    interior and $p$ possibly in the boundary.
    In particular, the segment $[q,p]$ meets $P_i$ in the interior of $P_i$
    which shows that $p \in
    \half{q}{P_i}$. If $p \in P_j$ for some $j \neq i$, then there is a
    facet-hyperplane $H$ of $P_j$ through $p$ that separates $P_j$ from $p'$.
    This, however, shows that $q$ and $P_j$ are on different sides of $H$ and
    hence $p \not\in \half{q}{P_j}$.
\end{proof}

For a valuation $\phi$ we define 
\[
    \phi(\half{q}{P}) \ := \ \phi(P) - \sum_{\emptyset \neq J \subseteq I_q(P)}
    (-1)^{|J|} \phi( F_J ),
\]
where we set $F_J := \bigcap_{i \in J} F_i$. Lemma~\ref{lem:ho-decomp} now
implies the following.

\begin{cor}\label{cor:ho-val}
    Let $P = P_1 \cup \cdots \cup P_k$ be a dissection with $P_1,\dots,P_k \in
    \PolL$. If $\phi$ is a valuation on $\PolL$, then for a general $q \in
    \relint(P)$
    \[
        \phi(P) \ = \ 
        \phi(\half{q}{P_1}) + 
        \cdots + 
        \phi(\half{q}{P_k}).
    \]
\end{cor}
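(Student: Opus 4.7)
My plan is to combine Lemma~\ref{lem:ho-decomp} with the generalised inclusion--exclusion property~\eqref{eqn:IE}.

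First I would observe that since $q \in \relint(P)$ is general with respect to $P$, the point $q$ lies beneath every facet of $P$; consequently $I_q(P) = \emptyset$, $\half{q}{P} = P$, and the defining formula gives $\phi(\half{q}{P}) = \phi(P)$. Lemma~\ref{lem:ho-decomp} then yields the set-theoretic disjoint decomposition
\[
P \ = \ \half{q}{P_1} \uplus \cdots \uplus \half{q}{P_k},
\]
and the task is to promote this partition to an additive identity for $\phi$.

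I would do this by reformulating both sides of the claimed equation as signed sums of $\phi$-values on faces of the $P_i$. On the right, unfolding the defining formula gives
\[
\sum_{i=1}^k \phi(\half{q}{P_i}) \ = \ \sum_{i=1}^k \sum_{J \subseteq I_q(P_i)} (-1)^{|J|}\,\phi(F^{(i)}_J),
\]
with $F^{(i)}_J := \bigcap_{j \in J} F_j$ and $F^{(i)}_\emptyset := P_i$. On the left, \eqref{eqn:IE} applied to $P = P_1 \cup \cdots \cup P_k$ rewrites $\phi(P)$ as a signed sum of $\phi\bigl(\bigcap_{i \in I} P_i\bigr)$ over nonempty $I \subseteq [k]$. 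Both sums are indexed by faces of the dissection, and the matching of the two expansions is controlled by the pointwise identity of polytopal indicator functions
\[
\mathbf{1}_P \ = \ \sum_{i=1}^k \sum_{J \subseteq I_q(P_i)} (-1)^{|J|}\, \mathbf{1}_{F^{(i)}_J},
\]
which is immediate from Lemma~\ref{lem:ho-decomp}: each $x \in P$ lies in a unique $\half{q}{P_i}$, and for that $i$ the inner alternating sum over $J$ collapses to $1$ (while all other $x$ contribute $0$).

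The main obstacle is the passage from this pointwise indicator-function identity to an identity of $\phi$-values. This is legitimate because every face $F^{(i)}_J$ is a face of the $\LL$-polytope $P_i$, hence lies in $\PolL$, and the indicator-function identity can be organised as an iterated use of~\eqref{eqn:IE} along the intersection poset of the dissection, with the uniqueness in Lemma~\ref{lem:ho-decomp} ensuring that the signs line up. Rearranging then yields $\phi(P) = \sum_{i=1}^k \phi(\half{q}{P_i})$, as claimed.
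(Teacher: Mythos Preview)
Your proposal is correct and follows exactly the route the paper intends: the paper offers no explicit proof beyond ``Lemma~\ref{lem:ho-decomp} now implies the following,'' and your argument is the natural unpacking of that sentence---use $q\in\relint(P)$ to get $\half{q}{P}=P$, invoke the disjoint decomposition of Lemma~\ref{lem:ho-decomp}, expand each $\phi(\half{q}{P_i})$ via its defining formula, and pass from the resulting indicator identity to a $\phi$-identity.

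One small caution about the final step. Your phrase ``organised as an iterated use of~\eqref{eqn:IE} along the intersection poset of the dissection'' is loose: when $\LL$ is a lattice and the dissection is not a polytopal subdivision, the intersections $P_I=\bigcap_{i\in I}P_i$ need not lie in $\PolL$, so~\eqref{eqn:IE} cannot be applied directly to the family $P_1,\dots,P_k$. What actually carries the argument is that the inclusion--exclusion property is equivalent (this is the content of the results of Volland and of McMullen cited just before~\eqref{eqn:IE}) to the extension of $\phi$ to a homomorphism on the abelian group generated by indicator functions of $\LL$-polytopes. Since your identity $\mathbf{1}_P=\sum_i\sum_{J}(-1)^{|J|}\mathbf{1}_{F_J^{(i)}}$ involves only $P$ and faces $F_J^{(i)}$ of the $P_i$, all of which \emph{are} $\LL$-polytopes, applying the extended $\phi$ to both sides yields the claim without ever evaluating $\phi$ on a non-$\LL$-polytope. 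Phrased this way the gap closes cleanly.
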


It is well-known (see for example~\cite{DLRS}) that every (lattice) polytope
$P$ can be dissected into (lattice) simplices. Thus,
Theorem~\ref{thm:phi_poly} follows from Corollary~\ref{cor:ho-val} and the
following proposition.

\begin{prop}\label{prop:ho-poly}
    Let $\HO{S}$ be a full-dimensional, half-open $\LL$-simplex and $\phi$ a
    $\LL$-valuation.  Then the function $\phi_{\HO{S}}(n) = \phi(n\HO{S})$ is
    a polynomial in $n$ of degree at most $d$. 
\end{prop}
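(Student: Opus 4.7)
The plan is to induct on $d = \dim S$. For the base case $d = 0$, $n\HO{S}$ is a single $\LL$-point and by translation-invariance $\phi(n\HO{S}) = \phi(\HO{S})$ is constant in $n$, hence a degree-zero polynomial.

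For the inductive step, assume the proposition for all dimensions less than $d$. Applying the defining formula for $\phi$ on half-open polytopes to $n\HO{S}$,
\begin{equation*}
\phi(n\HO{S}) \ = \ \phi(nS) \ - \sum_{\emptyset \ne J \subseteq I_q(S)} (-1)^{|J|}\, \phi(nF_J),
\end{equation*}
each $F_J$ with $J \ne \emptyset$ is a face of $S$ of dimension $d - |J| < d$, hence a lower-dimensional closed $\LL$-simplex (viewed as a trivially half-open simplex with empty index). The inductive hypothesis then gives that each $\phi(nF_J)$ is polynomial in $n$ of degree at most $d - |J|$, so it remains to prove that $\phi(nS)$ is polynomial of degree at most $d$ for the closed $\LL$-simplex $S$.

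For this closed case, I would dissect $nS$ into lattice translates of a fixed finite family of $\LL$-simplices. By translation-invariance assume $v_0 = 0$, and let $\LL_0 = \mathbb{Z}v_1 + \cdots + \mathbb{Z}v_d \subseteq \LL$. The hyperplane arrangement consisting of all $\LL_0$-translates of the facet-hyperplanes of $S$ cuts $nS$ into small $\LL$-simplices, each a $\LL_0$-translate of one of finitely many representatives $T_1, \ldots, T_r$. Choosing a single generic point $q$ close to $v_0$ so that the ``beyond/beneath'' relation of $q$ to any translated facet depends only on the facet's direction (which is preserved by $\LL_0$-translation), the half-open index $I_q(w + T_j)$ depends only on $j$. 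Corollary~\ref{cor:ho-val} combined with translation-invariance of $\phi$ then collapses the sum to
\begin{equation*}
\phi(nS) \ = \ \sum_{j=1}^r N_j(n)\, \phi(\half{q}{T_j}),
\end{equation*}
where $N_j(n)$ counts the $\LL_0$-translates of $T_j$ inside $nS$. Each $N_j(n)$ is a lattice-point enumeration in a scaled polyhedral region and is therefore a polynomial in $n$ of degree at most $d$, yielding the desired polynomiality.

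The main obstacle is arranging the dissection so that the boundary cells of the fundamental-parallelepiped tiling that meet $\partial(nS)$ genuinely fit into the finite family $\{T_1,\ldots,T_r\}$ together with a \emph{single} uniform choice of $q$; this is exactly what Lemma~\ref{lem:ho-decomp} is designed to handle, since it permits the partial cells to be absorbed as half-open translates rather than being tracked as genuinely different shapes.
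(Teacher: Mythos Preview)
Your overall strategy---reduce to the closed simplex and then tile $nS$ by $\LL_0$-translates of finitely many fundamental cells---is close in spirit to the paper's argument, but the execution has a genuine gap.

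First, a minor correction: the chambers of the arrangement of $\LL_0$-translates of the facet hyperplanes of $S$ are not simplices for $d \ge 3$. In the $\lambda$-coordinates (where $S$ becomes the standard simplex) the cells inside a unit cube are the hypersimplices $\{x \in [0,1]^d : k-1 \le \sum x_i \le k\}$, and the middle ones are not simplices. This is harmless---you only need finitely many $\LL$-polytope types---but it is not what you wrote.

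The real problem is the claim that $I_q(w + T_j)$ depends only on $j$. It does not. For any facet hyperplane $H$ of $T_j$, the translates $w + H$ are a family of parallel hyperplanes, and a fixed point $q$ lies beneath some and beyond others; the ``beyond'' set of $q$ genuinely depends on $w$, not just on the direction of $H$. Already in dimension~$1$ with $q \in (0,1)$ one has $\half{q}{[0,1]} = [0,1]$ but $\half{q}{[k,k+1]} = (k,k+1]$ for $k \ge 1$. In higher dimensions the half-open type of $w + T_j$ depends on which coordinates of $w$ vanish, producing on the order of $2^d$ half-open types per $T_j$ rather than one. The argument can be salvaged by checking that each such type occurs a polynomial-in-$n$ number of times, but that is substantially more bookkeeping than you indicate, and your stated justification is false. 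Lemma~\ref{lem:ho-decomp} does not help here: it guarantees a disjoint half-open decomposition, not uniformity of the half-open indices across translates.

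There is also a circularity hazard: saying ``$N_j(n)$ is a lattice-point enumeration in a scaled polyhedral region and is therefore a polynomial'' threatens to invoke Ehrhart's theorem, which is exactly the special case of Theorem~\ref{thm:phi_poly} that Proposition~\ref{prop:ho-poly} is being used to prove. In fact the relevant counts are the explicit binomial coefficients $\binom{n - k + d}{d}$ (stars and bars), so no circularity is necessary---but you should say so.

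The paper avoids all of this. It treats the half-open simplex directly (no inductive reduction to the closed case) by lifting to $\R^{d+1}$: the half-open cone over $\HO{S}$ is tiled by $\Z_{\ge 0}$-combinations of the $\bar v_i$ plus a unique remainder in a single half-open parallelepiped $\Pi$. The slices $\Pi_0,\dots,\Pi_d$ of $\Pi$ are partly-open hypersimplices with a \emph{uniform} open/closed pattern by construction, and slicing the cone at height $n$ gives $n\HO{S}$ as a disjoint union with exactly $\binom{n+d-r}{d}$ translates of each $\Pi_r$, yielding~\eqref{eqn:halfopenpoly} immediately. The parallelepiped tiling is precisely the device that supplies the uniformity you tried, unsuccessfully, to extract from a single generic point $q$.
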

\begin{proof}
    Let $S$ be the $\LL$-simplex such that $\HO{S} = \half{q}{S}$ for some
    general $q$ and set $I = I_q(S)$. Now, $S$ has vertices $v_1,\dots,v_{d+1}$ and facets
    $F_1,\dots,F_{d+1}$ labeled in such a way that $v_i \not\in F_i$ for $i
    \in [d+1]$.  An intrinsic description of $\HO{S}$ is given by
    \[
        \HO{S} \ = \ \left\{ \sum_i \lambda_i v_i : \sum_i \lambda_i = 1,
        \lambda_i \ge 0 \text{ for } i \not\in I, \lambda_i > 0 \text{ for } i
        \in I\right\}.
    \]
    Define $\bar v_i = (v_i,1) \in \R^{d+1}$ and consider the half-open
    polyhedral cone
    \[
        C \ := \ \bigl\{ \mu_1 \bar v_1 + \cdots + \mu_{d+1} \bar v_{d+1} :
        \mu_1,\dots,\mu_{d+1} \ge 0, \mu_i > 0 \text{ for } i \in I \bigr\}.
    \]
    For $n \ge 0$, the hyperplane $H_n = \{ x \in \R^{d+1} : x_{d+1} = n \}$
    can be naturally identified with $\R^d$ such that $H_n \cap C =
    n\,\HO{S}$, where $0\,\HO{S}:=\emptyset$ unless $I=\emptyset$.  Define the 
    (half-open) parallelepiped
    \[
        \Pi \ := \ \bigl\{ \mu_1 \bar v_1 + \cdots + \mu_{d+1} \bar v_{d+1} :
        0 \le \mu_i < 1 \text{ for } i \not\in I, 0 < \mu_i \le 1 \text{ for }
        i \in I \bigr\}.
    \]
    Then for every $p \in C$ there are unique $\mu_i \in \Z_{\ge0}$ and $r \in 
    \Pi$ such that $p = \sum_i \mu_i \bar
    v_i + r$.  Let us write 
    \begin{equation}\label{eqn:hypersimplex} 
        \Pi_i \ := \ \Pi \cap H_i \quad \text{ for } 0 \le i \le d. 
    \end{equation}        
    In general, the $\Pi_j$ are \emph{not} half-open polytopes but
    \emph{partly-open}: they are $\LL$-polytopes with certain relatively open
    faces removed.  It follows that
    \[
        n\, \HO{S} \ = \ C \cap H_n \ = \ \biguplus _{k,r \ge 0 \atop 
        k+r=n}\{ \bar v_{i_1} + 
        \cdots +
        \bar v_{i_k} + \Pi_r : 1 \le i_1 \le \cdots \le i_k \le d+1\}.
    \]
    This is a partition of $n\, \HO{S}$ into partly-open polytopes. Using the
    translation-invariance of $\phi$ yields
\begin{equation}\label{eqn:halfopenpoly}
        \phi_{\HO{S}}(n) \ = \ 
        \phi(\Pi_0) \binom{n+d}{d} + 
        \phi(\Pi_1) \binom{n+d-1}{d} + 
        \cdots + 
        \phi(\Pi_d) \binom{n}{d},
        \end{equation}
    where we used~\eqref{eqn:sumrelint} to compute $\phi(\Pi_j)$.
\end{proof}

A notion developed in the proof that will be of importance later is the
following. For a (half-open) simplex $S$, we define the $j$-th \Defn{(partly
open) hypersimplex} $\Pi_j(S)$ through~\eqref{eqn:hypersimplex}.
Proposition~\ref{prop:ho-poly} prompts the definition of an $h^*$-vector for
half-open polytopes. The proof of Proposition~\ref{prop:ho-poly} then yields
\begin{cor}\label{cor:h-hyper}
    If $\HO{S} \subset \R^d$ is a half-open $\LL$-simplex and $\phi$ a
    $\LL$-valuation, then
    \[
        h^\phi_j(\HO{S}) \ = \ \phi(\Pi_j(\HO{S}))
    \]
    for all $0 \le j \le d$.
\end{cor}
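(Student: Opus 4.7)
The proof essentially just reads off the coefficients from the expansion already established in the proof of Proposition~\ref{prop:ho-poly}. The plan is to combine that formula with the uniqueness of binomial-basis expansions.

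First I would recall that in the proof of Proposition~\ref{prop:ho-poly} the explicit partition of $n\,\HO{S}$ into translates of the partly-open hypersimplices $\Pi_j = \Pi_j(\HO{S})$ led, via translation-invariance of $\phi$ and identity~\eqref{eqn:sumrelint} applied to each $\Pi_j$, to the formula
\[
    \phi(n\HO{S}) \ = \ \phi(\Pi_0)\binom{n+d}{d} + \phi(\Pi_1)\binom{n+d-1}{d} + \cdots + \phi(\Pi_d)\binom{n}{d}.
\]
This already displays $\phi_{\HO{S}}(n)$ as a polynomial of degree at most $d$ in $n$ with coefficients $\phi(\Pi_j)$ in the binomial basis.

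Next I would extend the definition of the $h^*$-vector from Theorem~\ref{thm:phi_poly} to half-open $\LL$-simplices: namely, define $h^\phi_j(\HO{S})$ to be the unique scalars such that
\[
    \phi(n\HO{S}) \ = \ \sum_{j=0}^d h^\phi_j(\HO{S}) \binom{n+d-j}{d}.
\]
Uniqueness is the one point that requires verification, but it is immediate because the polynomials $\binom{n+d-j}{d}$ for $j=0,1,\dots,d$ have pairwise distinct degrees when viewed in the variable $n$ (they are of degree $d$ but their leading-coefficient expansions in the monomial basis give a triangular, hence invertible, change of basis). Thus $\{\binom{n+d-j}{d}\}_{j=0}^d$ is a basis for polynomials of degree at most $d$, which gives uniqueness of the expansion over the divisible abelian group (or more generally over a $\Q$-module); in the present statement $\phi$ takes values in $\R$, so there is no subtlety.

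Comparing the two expansions coefficient by coefficient yields $h^\phi_j(\HO{S}) = \phi(\Pi_j(\HO{S}))$ for all $0 \le j \le d$, as required. The only conceivable obstacle is the uniqueness step for valuations into a general group $G$, but for the target groups considered here this is handled by the basis property above, so I expect no real difficulty.
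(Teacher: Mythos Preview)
Your proof is correct and is exactly the paper's approach: the corollary is stated immediately after the sentence ``Proposition~\ref{prop:ho-poly} prompts the definition of an $h^*$-vector for half-open polytopes,'' so the $h^\phi_j(\HO{S})$ are \emph{defined} as the coefficients in the binomial expansion of $\phi_{\HO{S}}(n)$, and equation~\eqref{eqn:halfopenpoly} then gives the result verbatim. One small remark: the corollary is stated for valuations into a general abelian group $G$, not just $\R$; your triangular-basis argument for uniqueness still goes through in that setting (evaluate successively at $n=0,1,\dots,d$), so your hedge about divisibility is unnecessary.
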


The following is an immediate consequence of Corollary~\ref{cor:ho-val} and
Proposition~\ref{prop:ho-poly}.

\begin{cor}\label{cor:h-add}
    Let $P \in \PolL$ be a polytope and $\phi$ a $\LL$-valuation.  Let $P =
    P_1 \cup \cdots \cup P_k$ be a dissection into $\LL$-simplices and $q \in
    \relint(P)$ a point general with respect to $P_i$ for all $i=1,\dots,k$.
    Then
    \[
        h^\phi(P) \ = \ 
        h^\phi(\half{q}{P_1}) + \cdots +
        h^\phi(\half{q}{P_k}).
    \]
\end{cor}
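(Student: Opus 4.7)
The plan is to reduce Corollary~\ref{cor:h-add} to a polynomial identity in $n$, obtained by applying Corollary~\ref{cor:ho-val} to every dilate $nP$ at once, and then comparing coefficients in the binomial basis $\binom{n+d-j}{d}$.

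First I would note that for $n \ge 1$ the collection $nP_1,\dots,nP_k$ is again a dissection of $nP$, and the point $nq$ lies in $\relint(nP)$ and is general with respect to each $nP_i$. Indeed, a point is beyond a facet of $P_i$ iff the scaled point is beyond the corresponding facet of $nP_i$, because ``beyond'' is determined by the facet-defining hyperplanes, which dilate linearly. In particular $I_{nq}(nP_i)=I_q(P_i)$, so
\[
    \half{nq}{nP_i} \ = \ n\,\half{q}{P_i}
\]
for every $n\ge 1$. Applying Corollary~\ref{cor:ho-val} to the dissection $nP = nP_1\cup\cdots\cup nP_k$ with the general interior point $nq$ therefore gives
\[
    \phi(nP) \ = \ \sum_{i=1}^k \phi\bigl(n\,\half{q}{P_i}\bigr)
\]
for all $n\ge 1$.

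Next I would invoke Theorem~\ref{thm:phi_poly} on $P$ and Proposition~\ref{prop:ho-poly} on each $\half{q}{P_i}$, which expand both sides in the binomial basis:
\[
    \phi(nP) \ = \ \sum_{j=0}^{d} h^\phi_j(P)\binom{n+d-j}{d},
    \qquad
    \phi\bigl(n\,\half{q}{P_i}\bigr) \ = \ \sum_{j=0}^{d} h^\phi_j\bigl(\half{q}{P_i}\bigr)\binom{n+d-j}{d},
\]
where I pad with zeros past $\dim P$ as in the convention of the paper (and apply Proposition~\ref{prop:ho-poly} within $\aff(P)$ if $P$ is not full-dimensional, so that each $P_i$ is full-dimensional in its ambient affine span). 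Since the two sides agree for all $n\ge 1$, they agree as polynomials in $n$, and the binomials $\binom{n+d-j}{d}$ for $j=0,\dots,d$ form a basis of polynomials of degree at most $d$. Matching coefficients yields
\[
    h^\phi_j(P) \ = \ \sum_{i=1}^k h^\phi_j\bigl(\half{q}{P_i}\bigr)
\]
for every $0\le j\le d$, which is the claimed vector equality.

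There is no serious obstacle; the only point requiring care is the identity $\half{nq}{nP_i}=n\,\half{q}{P_i}$, which rests on the fact that ``beyond'' is a property of hyperplanes through the origin-less affine structure and is therefore preserved under positive dilation. Everything else is an immediate application of the polynomiality results from Section~\ref{sec:halfopen} together with the unique expansion in the binomial basis.
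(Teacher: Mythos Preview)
Your argument is correct and is precisely the natural unpacking of what the paper means by ``immediate consequence of Corollary~\ref{cor:ho-val} and Proposition~\ref{prop:ho-poly}'': apply Corollary~\ref{cor:ho-val} to each dilate, use polynomiality from Proposition~\ref{prop:ho-poly} (and Theorem~\ref{thm:phi_poly}), and match coefficients in the binomial basis. The only minor point is that when $\dim P = r < d$ your displayed binomials should read $\binom{n+r-j}{r}$ rather than $\binom{n+d-j}{d}$, consistent with your own remark about working inside $\aff(P)$; this is purely notational and does not affect the proof.
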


\subsection{Combinatorial positivity and monotonicity}\label{ssec:h-non}

We now assume that $G$ is an abelian group together with a partial order
$\preceq$ compatible with the group structure, that is, $(G,\preceq)$ is a
poset such that for all $a,b,c \in G$ 
\[
    a \ \preceq \ b \quad \Longrightarrow \quad
    a+c \ \preceq \ b+c .
\]
    
A $\LL$-valuation $\phi :  \PolL \rightarrow G$ is called
\Defn{combinatorially positive} or \Defn{$h^\ast$-nonnegative} if
\[
    h^\phi_i (P) \ \succeq \  0
\]
for all $P\in \PolL$ and $0 \le i \le d$ and \Defn{combinatorially monotone}
or \Defn{$h^*$-monotone} if
\[
h^\phi_i (P) \ \preceq \ h^\phi_i (Q)
\]
for $P \subseteq Q \in \PolL$ and $0 \le i \le d$.  Our main theorem from the
introduction is a special case of the following.

\begin{thm}\label{thm:h-nonneg}
    For a $\LL$-valuation $\phi : \PolL \rightarrow G$ into a partially
    ordered abelian group $G$, the following are equivalent:
    \begin{enumerate}[\rm (i)]
        \item $\phi$ is combinatorially monotone; 
        \item $\phi$ is combinatorially positive; 
        \item $\phi(\relint(\Delta)) \succeq 0$ for every $\LL$-simplex $\Delta$.
    \end{enumerate}
\end{thm}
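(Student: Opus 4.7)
I would establish the cycle $\text{(i)} \Rightarrow \text{(ii)} \Rightarrow \text{(iii)} \Rightarrow \text{(i)}$. The first implication is immediate from monotonicity applied to $\emptyset \subseteq P$, since $h^\phi(\emptyset) = 0$.

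For $\text{(ii)} \Rightarrow \text{(iii)}$, the key identity is $\phi(\relint \Delta) = h^\phi_r(-\Delta)$ for every $\LL$-simplex $\Delta$ of dimension $r$. By Corollary~\ref{cor:h-hyper}, $h^\phi_r(-\Delta) = \phi(\Pi_r(-\Delta))$. The substitution $\lambda_i := 1 - \mu_i$ in the parallelepiped description from the proof of Proposition~\ref{prop:ho-poly} identifies $\Pi_r(-\Delta)$ with the $\LL$-translate $(-\sum_i v_i) + \relint(\Delta)$, where $v_1, \ldots, v_{r+1}$ are the vertices of $\Delta$; translation-invariance then yields the identity. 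Applying (ii) to the $\LL$-simplex $-\Delta$ gives $h^\phi_r(-\Delta) \succeq 0$, hence $\phi(\relint \Delta) \succeq 0$. (The argument works verbatim in $\aff \Delta$ with the inherited sublattice/subspace structure when $\dim \Delta < d$.)

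The crux is $\text{(iii)} \Rightarrow \text{(i)}$, for which the key lemma is that $\phi(\Pi_j(\HO{\Delta})) \succeq 0$ for every half-open $\LL$-simplex and every $j$. The closure $\overline{\Pi_j(\HO{\Delta})}$ is an $\LL$-polytope (its vertices being lattice combinations of the lifted $\bar v_i$), so admits a triangulation into $\LL$-simplices. This induces $\overline{\Pi_j(\HO{\Delta})} = \biguplus_\sigma \relint(\sigma)$ over all simplices of the triangulation; the facets removed by the half-open conditions ($\mu_i = 0$ for $i \in I$ and $\mu_i = 1$ for $i \notin I$) are themselves subcomplexes, so $\Pi_j(\HO{\Delta}) = \biguplus_{\sigma \in \mathcal{S}} \relint(\sigma)$ for some subfamily $\mathcal{S}$ of $\LL$-simplices. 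By~\eqref{eqn:sumrelint} and (iii), $\phi(\Pi_j(\HO{\Delta})) = \sum_{\sigma \in \mathcal{S}} \phi(\relint \sigma) \succeq 0$. Granting the lemma, for $P \subseteq Q$ in $\PolL$ with $\dim P = \dim Q$, take an $\LL$-triangulation of $Q$ making $P$ a subcomplex and pick $q \in \relint P$ general with respect to all simplices; Corollaries~\ref{cor:h-hyper} and \ref{cor:h-add} give $h^\phi(Q) - h^\phi(P) = \sum_{\Delta_i \not\subseteq P} h^\phi(\half{q}{\Delta_i}) \succeq 0$ componentwise. For $\dim P < \dim Q$: coordinates $j > \dim P$ satisfy $h^\phi_j(P) = 0 \preceq h^\phi_j(Q)$ by combinatorial positivity (the lemma with $P = \emptyset$); for $j \le \dim P$ one chains through $\LL$-pyramid extensions $P = P_r \subset P_{r+1} \subset \cdots \subset P_d \subseteq Q$ with $P_{i+1} := \conv(P_i \cup \{w_i\})$ for $w_i$ a lattice vertex of $Q$ outside $\aff P_i$, each step handled by a variant of the same hypersimplex comparison.

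The main obstacle is ensuring that the $\LL$-triangulation of $\overline{\Pi_j(\HO{\Delta})}$ restricts cleanly to the half-open set, that is, that each removed facet of $\overline{\Pi_j}$ decomposes as a union of relative interiors of simplices in the triangulation. This is standard for polyhedral subdivisions but requires careful bookkeeping of the mixed constraints $\mu_i \in [0, 1)$ versus $\mu_i \in (0, 1]$. The pyramid step in the lower-dimensional monotonicity is the other delicate point, as it requires a dedicated comparison lemma relating the hypersimplices of a simplex and its pyramid.
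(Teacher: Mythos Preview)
Your plan is essentially the paper's proof: the same cycle of implications, the identification of $\phi(\relint\Delta)$ with the top hypersimplex value of $-\Delta$ (your index $r$ is in fact the correct one), the key lemma that each partly-open hypersimplex decomposes into relatively open $\LL$-simplices via a triangulation of its closure, the equal-dimension case via a dissection of $Q$ extending one of $P$ with $q\in\relint P$, and the pyramid-chain reduction for unequal dimensions. The paper resolves exactly the two technical points you flag: for the hypersimplex lemma it simply takes $\mathcal{T}'=\{\sigma\in\mathcal{T}:\relint(\sigma)\subset\Pi_j\}$ from any $\LL$-triangulation $\mathcal{T}$ of $\overline{\Pi_j}$, and for the pyramid step it observes that $\Pi_j(\HO{P}_i)\subseteq\Pi_j(\HO{Q}_i)$ is a partly-open face, so the difference is again a sum of $\phi(\relint\sigma)$ over the remaining simplices.
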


\begin{proof}
    The implication (i) $\Rightarrow$ (ii) simply follows from the fact that 
    $\emptyset$ is trivially a $\LL$-polytope. Hence, $h^\phi_i(P) \succeq
    h^\phi_i(\emptyset) = 0$ for every $P \in \PolL$ and all $i$.

    For (ii) $\Rightarrow$ (iii), let $\Delta$ be a $\LL$-simplex of dimension
    $r$. Note that the $(r-1)$-th partly-open hypersimplex $\Pi_{r-1}$ of
    $\Delta$ is a translate of $\relint(-\Delta)$. Combinatorial positivity
    implies that \mbox{$0 \preceq h^\phi_{r-1}(-\Delta) = \phi(\Pi_{r-1}(-\Delta)) =
    \phi(\relint(\Delta))$}.
    
    (iii) $\Rightarrow$ (i): Let $P\subseteq Q$ be two $\LL$-polytopes.  If $r
    = \dim P = \dim Q$, let $Q = T_1 \cup T_2 \cup \cdots \cup T_N$ be a
    dissection of $Q$ into $r$-dimensional $\LL$-simplices such that $P =
    T_{M+1} \cup T_{M+2} \cup \cdots \cup T_N$ for some $M < N$.  Such a
    dissection can be constructed using, for example, the
    \emph{Beneath-Beyond} algorithm~\cite[Section~8.4]{edelsbrunner}.
    For a point $q \in \relint P$ general with respect to all $T_i$, it
    follows from Corollary~\ref{cor:h-add} that
    \[
        h^\phi_i(Q) - h^\phi_i(P) \ = \ 
        h^\phi_i(\half{q}{T_1}) + 
        \cdots + 
        h^\phi_i(\half{q}{T_M}).
    \]
    Hence, it is sufficient to show 
    \[
        h^\phi_i(\HO{S}) \succeq 0
    \]
    for any \emph{half-open} $\LL$-simplex $\HO{S}$.  For $0 \le i \le \dim
    \HO{S}$, let $\Pi_i = \Pi_i(\HO{S})$ be the corresponding \mbox{$i$-th}
    hypersimplex and let $\overline{\Pi}_i$ be its closure.  Pick a
    triangulation $\Triang$ of $\overline{\Pi}_i$ into $\LL$-simplices.  Then
    \mbox{$\Triang^\prime = \{ \sigma \in \Triang : \relint(\sigma) \subset
    \Pi_i\}$}
    is a triangulation of the partly-open hypersimplex. From
    Corollary~\ref{cor:h-hyper} and inclusion-exclusion, we obtain
    \[
        h^\phi_i(\HO{S}) \ = \ \phi(\Pi_i) \ = \ \sum_{\sigma \in
        \Triang^\prime} \phi(\relint{\sigma}) \ \succeq \ 0,
    \]
    which completes the proof for the case $\dim P = \dim Q$. 

    Let $r := \dim Q - \dim P > 0$. Set $P^0 := P$ and 
    $P^i := \conv(P^{i-1} \cup q_i)$ for $i = 1,\dots,r-1$,
    where $q_i \in (Q \cap \LL)
    \setminus \aff(P^{i-1})$.  This yields a chain of $\LL$-polytopes
    \[
        P \ = \ P^0 \ \subset \ P^1 \ \subset \ \cdots \ \subset \ P^r \
        \subseteq \ Q
    \]
    with $\dim P^i = \dim P^{i-1} + 1$ for $1 \le i \le r$.  So, it remains to
    prove that $h^\phi(P) \preceq h^\phi(Q)$ when $Q$ is a pyramid with base $P$
    and apex $a$.  Let $P = P_1 \cup \cdots \cup P_k$ be a dissection of $P$
    into $\LL$-simplices. This induces a dissection of $Q$ with pieces $Q_i =
    \conv(P_i \cup a)$. A point $q \in \relint Q$ general with respect to
    all $Q_i$, gives half-open simplices $\HO{Q}_i = \half{q}{Q_i}$ with
    half-open facets $\HO{P}_i = \HO{Q}_i \cap P_i$. For $0 \le j \le d$, it
    is easy to see that $\Pi_j(\HO{P}_i) \subseteq \Pi_j(\HO{Q}_i)$ is a
(partly open) face.
    For fixed $j$, we compute from a triangulation $\Triang$ of
    $\Pi_j(\HO{Q}_i)$
    \[
        h^\phi_j(\HO{Q}_i) - h^\phi_j(\HO{P}_i) \ = \ \sum \{
        \phi(\relint(\sigma)) : \sigma \in \Triang, \relint(\sigma) \not
        \subseteq \Pi_j(\HO{P}_i) \} \ \succeq \ 0
    \]
    and hence
    \[
        h^\phi_j(Q) - h^\phi_j(P) \ = \ \sum_i h^\phi_j(\HO{Q}_i) -
        h^\phi_j(\HO{P}_i) \ \succeq 0. \qedhere
    \]
\end{proof}

As a direct consequence we recover Stanley's results regarding the
$h^*$-vector for the discrete volume.

\begin{cor}\label{cor:Ehr_NN}
    Let $\LL$ be a lattice. The discrete volume $\dvol(P) = |P\cap \Z^d|$ is a
    $h^*$-nonnegative and $h^*$-monotone valuation.
\end{cor}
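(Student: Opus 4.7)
The plan is to apply Theorem~\ref{thm:h-nonneg} to $\phi = \dvol$ with the standard order $\le$ on $\Z \subseteq \R$. Since discrete volume is translation-invariant and $\Z \subseteq \R$ is a partially ordered abelian group compatible with addition, it suffices to verify condition (iii) of the theorem: for every $\LL$-simplex $\Delta$,
\[
    \dvol(\relint(\Delta)) \ = \ \sum_F (-1)^{\dim \Delta - \dim F} \dvol(F) \ \ge \ 0,
\]
where the sum is over all faces $F \subseteq \Delta$.

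The key observation is that for the discrete volume the abstract combinatorial quantity $\dvol(\relint(\Delta))$ coincides with the number of lattice points in the geometric relative interior, which is manifestly nonnegative. To see this, recall that every lattice point $p \in \Delta \cap \LL$ lies in the relative interior of a unique face $F_p \subseteq \Delta$, and $p \in F$ if and only if $F_p \subseteq F$. Hence
\[
    \sum_F (-1)^{\dim \Delta - \dim F} \dvol(F) \ = \ \sum_{p \in \Delta \cap \LL} \sum_{F \supseteq F_p} (-1)^{\dim \Delta - \dim F}.
\]
By a standard M\"obius-inversion computation on the face poset of a simplex (an interval in the Boolean lattice), the inner sum vanishes unless $F_p = \Delta$, in which case it equals $1$. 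Therefore
\[
    \dvol(\relint(\Delta)) \ = \ |\relint(\Delta) \cap \LL| \ \ge \ 0,
\]
which establishes (iii). Theorem~\ref{thm:h-nonneg} then yields that $\dvol$ is both $h^*$-monotone and $h^*$-nonnegative.

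There is no real obstacle here — the corollary is essentially a one-line consequence of Theorem~\ref{thm:h-nonneg} once one notices that the formal alternating sum defining $\phi(\relint(\Delta))$ agrees with the naive geometric interpretation when $\phi = \dvol$. The only substantive ingredient is the M\"obius inversion on the face lattice of a simplex, which is routine.
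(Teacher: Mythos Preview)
Your proof is correct and follows essentially the same approach as the paper: apply Theorem~\ref{thm:h-nonneg}(iii) and observe that the alternating sum defining $\dvol(\relint(\Delta))$ coincides with $|\relint(\Delta)\cap\LL|$. The paper simply asserts this equality from the definition (via~\eqref{eqn:phi_relint} and~\eqref{eqn:sumrelint}), whereas you spell out the underlying M\"obius inversion explicitly.
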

\begin{proof}
    By Theorem~\ref{thm:h-nonneg}, it suffices to prove that
    $\dvol(\relint(P)) \ge 0$ for all polytopes $P \in \Pol{\Z^d}$.
    From the definition of $\dvol(\relint(P))$ it follows 
    that $\dvol(\relint(P)) = |\relint(P) \cap \Z^d| \ge
    0$. 
\end{proof}

Another simple application gives the following.
\begin{cor}\label{cor:simple_NN}
    A simple $\LL$-valuation $\phi : \PolL \rightarrow G$ is combinatorially
    positive
    if and only if $\phi(P) \succeq 0$ for all $P \in \PolL$.
\end{cor}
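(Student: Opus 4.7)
The plan is to verify both directions of the equivalence separately, using Theorem~\ref{thm:h-nonneg} for one direction and the polynomial expansion from Theorem~\ref{thm:phi_poly} for the other.

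For the easier implication, suppose $\phi$ is combinatorially positive. I would evaluate the Ehrhart-type expansion for $\phi$ at $n=1$: for an $r$-dimensional polytope $P \in \PolL$,
\[
    \phi(P) \ = \ \phi_P(1) \ = \ \sum_{i=0}^{r} h^\phi_i(P)\binom{r+1-i}{r}.
\]
Since $\binom{r+1-i}{r}=0$ whenever $i\ge 2$, this simplifies to $(r+1)h^\phi_0(P)+h^\phi_1(P)$ (or just $h^\phi_0(P)$ if $r=0$). As both coefficients are $\succeq 0$ by assumption and positive integer multiples preserve the order on $G$, we conclude $\phi(P)\succeq 0$. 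Note this direction does not even require simplicity.

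For the converse, assume $\phi$ is simple and nonnegative on all $\LL$-polytopes. By Theorem~\ref{thm:h-nonneg}, it suffices to check condition (iii): $\phi(\relint(\Delta)) \succeq 0$ for every $\LL$-simplex $\Delta$. Unfolding the definition,
\[
    \phi(\relint(\Delta)) \ = \ \sum_{F \subseteq \Delta} (-1)^{\dim\Delta-\dim F}\phi(F).
\]
Since $\phi$ is simple, $\phi(F)=0$ for every face $F$ with $\dim F<d$. If $\dim \Delta < d$, every term vanishes and $\phi(\relint(\Delta))=0$. If $\dim\Delta=d$, then only the top-dimensional face contributes, and the sum collapses to $\phi(\Delta)$, which is $\succeq 0$ by hypothesis.

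There is no serious obstacle here — the main point is that simplicity kills all lower-dimensional faces in the Möbius-type formula defining $\phi(\relint(\Delta))$, so nonnegativity on polytopes is transferred directly to nonnegativity of $\phi(\relint(\Delta))$, and Theorem~\ref{thm:h-nonneg} then does the work of propagating this to $h^*$-nonnegativity. The one thing to be mindful of is that in a partially ordered abelian group, multiplication by a positive integer $m$ preserves $\succeq 0$, which follows inductively from compatibility of $\preceq$ with addition.
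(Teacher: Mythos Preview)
Your proof is correct. The reverse direction (nonnegative $\Rightarrow$ combinatorially positive) is exactly the paper's argument: simplicity collapses the M\"obius sum defining $\phi(\relint\Delta)$ to $\phi(\Delta)$, and Theorem~\ref{thm:h-nonneg} applies. For the forward direction the paper proceeds more uniformly, using the same identity $\phi(\relint P)=\phi(P)$ together with Theorem~\ref{thm:h-nonneg}(iii) (the passage from simplices to arbitrary $P$ then being immediate from a triangulation and simplicity); your direct evaluation at $n=1$ is a valid alternative and, as you note, has the minor bonus of not requiring simplicity for that implication.
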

\begin{proof}
    For a simple valuation, we observe that
    \[
        \phi(\relint (P)) \ = \ \sum_F (-1)^{\dim (P)-\dim (F)} \phi(F) \
        = \ \phi (P).
    \]
    Theorem~\ref{thm:h-nonneg} yields the claim.
\end{proof}

Since the solid-angle valuation is simple, this implies the main results of
Beck, Robins, and Sam~\cite{BRS10}.

\begin{cor}\label{cor:solid_NN}
    The solid-angle valuation $A(P)$ is $h^*$-nonnegative and $h^*$-monotone.
\end{cor}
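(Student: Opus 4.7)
The plan is to reduce the statement immediately to Corollary~\ref{cor:simple_NN}. The preceding definition already records that $A$ is a simple $\LL$-valuation, so combinatorial positivity is equivalent to the pointwise nonnegativity $A(P) \ge 0$ for all $P \in \PolL$. Once combinatorial positivity has been verified, combinatorial monotonicity comes for free from the equivalence (i) $\Leftrightarrow$ (ii) in Theorem~\ref{thm:h-nonneg}. So the only substantive task is to check nonnegativity.

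For nonnegativity I would argue term-by-term in the defining sum $A(P) = \sum_{p \in \LL}\omega(-p+P)$. Each solid angle
\[
    \omega(-p+P) \ = \ \lim_{\eps \to 0} \frac{V(\eps B_d \cap (-p+P))}{V(\eps B_d)}
\]
is a limit of quotients of volumes of nested sets contained in $\eps B_d$, and hence lies in $[0,1]$. Summing nonnegative reals yields a nonnegative real, so $A(P) \ge 0$.

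The only subtlety worth flagging is that the defining sum is actually finite, so that no convergence issue arises. If $p \notin P$ then for all sufficiently small $\eps>0$ the ball $\eps B_d$ is disjoint from $-p+P$, so $\omega(-p+P)=0$; since $P$ is bounded, only the finitely many $p\in P\cap\LL$ can contribute. This is the one place where one has to be slightly careful, but it is routine rather than an obstacle. Combining the above with Corollary~\ref{cor:simple_NN} gives combinatorial positivity of $A$, and Theorem~\ref{thm:h-nonneg} then delivers combinatorial monotonicity as well.
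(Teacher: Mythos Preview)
Your proof is correct and follows exactly the route the paper takes: invoke Corollary~\ref{cor:simple_NN} using that $A$ is simple and pointwise nonnegative, then read off $h^*$-monotonicity from Theorem~\ref{thm:h-nonneg}. The paper leaves the verification of $A(P)\ge 0$ implicit, so your additional remarks on the nonnegativity and finiteness of the defining sum only add detail rather than deviate from the intended argument.
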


Beck, Robins, and Sam also give a sufficient condition for the
$h^*$-nonnegativity/-monotonicity of general weight valuations. Theorems~3
and~4 of~\cite{BRS10} state that $N_\wSys$ is \mbox{$h^*$-nonnegative} and
\mbox{$h^*$-monotone} if and only if $\nu_p(P) \ge 0$ for all $P \in \Pol{\Z^d}$ and
all $p \in \Z^d$. Unfortunately, this condition is not correct as
Example~\ref{ex:euler} shows.

The Steiner valuation $S$ also turns out not to be combinatorially
positive/monotone.
\begin{example}\label{ex:Steiner}
    Let $P=[0,\alpha e_1] \subset \R^d$ be a segment of length $\alpha >0$ in
    dimension $d > 1$. Then
    \[
        S(\relint (P)) \ = \ \vol( P + B_d ) - \vol(0 + B_d) - \vol(\alpha e_1
        + B_d)
        \ = \ 
        \alpha {\vol}_{d-1}(B_{d-1})-{\vol}_d(B_d) \ < \ 0
    \]
    for $\alpha$ sufficiently small.

\end{example}

\section{Reciprocity and a multivariate Ehrhart--Macdonald
Theorem}\label{sec:reciprocity}

A fascinating result in Ehrhart theory and an important tool in geometric and
enumerative combinatorics is the reciprocity theorem of Ehrhart and Macdonald.

\begin{thm}\label{thm:ehrhart_macdonald}
    Let $P \subset \R^d$ be a lattice polytope and $\dvol_P(n)$ its Ehrhart
    polynomial. Then
    \[
        (-1)^{\dim P} \dvol(-n) \ = \ \dvol(\relint(nP)) = |\relint(nP) \cap
        \Z^d|.
    \]
\end{thm}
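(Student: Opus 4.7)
The plan is to combine the half-open decomposition framework of Section~\ref{sec:halfopen} with the elementary binomial reciprocity $\binom{-m}{d} = (-1)^d \binom{m+d-1}{d}$. Both sides of the desired identity are polynomials in $n$ by Theorem~\ref{thm:phi_poly}, so it suffices to verify equality as polynomials. First, I would fix a triangulation $P = T_1 \cup \cdots \cup T_k$ into $\LL$-simplices and a generic point $q \in \relint(P)$. Lemma~\ref{lem:ho-decomp} and Corollary~\ref{cor:ho-val} then give the disjoint decomposition $P = \bigsqcup_i \HO{T}_i$ with $\HO{T}_i = \half{q}{T_i}$ and $\dvol_P(n) = \sum_i \dvol(n\HO{T}_i)$.

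The key step is a reciprocity for each half-open simplex: writing $S^\vee_i := T_i \setminus \bigcup\{F_{i,j} : j \notin I_q(T_i)\}$ for the complementary half-open simplex (obtained from $\HO{T}_i$ by swapping open and closed facets), I would prove
\[
    (-1)^d \dvol(-n\HO{T}_i) \ = \ \dvol(nS^\vee_i).
\]
This is immediate from the proof of Proposition~\ref{prop:ho-poly}: the formula $\dvol(n\HO{T}_i) = \sum_{j=0}^d g_j^{(i)} \binom{n+d-j}{d}$ with $g_j^{(i)} = |\Pi_j(\HO{T}_i) \cap \Z^d|$ combined with the binomial identity yields $(-1)^d \dvol(-n\HO{T}_i) = \sum_j g_j^{(i)} \binom{n+j-1}{d}$. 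The lattice-preserving affine involution $x \mapsto \sum_\ell \bar{v}^{(i)}_\ell - x$ on $\Pi(\HO{T}_i)$ bijects onto $\Pi(S^\vee_i)$ and sends height $j$ to height $d+1-j$, so after reindexing the sum equals $\dvol(nS^\vee_i)$.

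To conclude, I would show $\sum_i \dvol(nS^\vee_i) = \dvol(\relint(nP))$. Since $q \in \relint(P)$, a facet of $T_i$ is beneath $q$ if and only if it lies on $\partial P$, so $S^\vee_i$ removes from $T_i$ precisely the facets on $\partial P$ and keeps the internal dissection facets. The main obstacle is that the $S^\vee_i$ are not pairwise disjoint: a lattice point on an internal facet of the dissection lies in two complementary simplices and would be over-counted. I would overcome this either by applying Lemma~\ref{lem:ho-decomp} with a second auxiliary generic point so that the resulting complementary pieces disjointly cover $\relint(P)$, or by lifting the entire argument to the cone $\mathrm{cone}(P \times \{1\})$ in $\R^{d+1}$ and invoking the central involution on each simplicial subcone to obtain the identity at the level of the Ehrhart generating function.
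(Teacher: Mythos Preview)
Your overall strategy is correct and is exactly the paper's approach: the paper proves the general reciprocity Theorem~\ref{thm:halfopenrec} for half-open polytopes via the same binomial identity applied to~\eqref{eqn:halfopenpoly}, and your complementary half-open simplex $S^\vee_i$ is precisely the paper's $\ihalf{q}{T_i}$.

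However, your description of $S^\vee_i$ contains an error that creates a phantom obstacle. You claim that for $q\in\relint(P)$ a facet of $T_i$ is beneath $q$ if and only if it lies on $\partial P$. Only the ``if'' direction holds: an \emph{internal} facet $F$ shared by $T_i$ and $T_j$ has $q$ beneath it from exactly one of the two sides (say $T_i$) and beyond it from the other. Hence $F$ is removed from $S^\vee_i$ and retained in $S^\vee_j$, so the $S^\vee_i$ \emph{are} pairwise disjoint and your ``main obstacle'' does not exist. This is exactly the content of Lemma~\ref{lem:ho-decomp-complement}, which the paper proves by your second proposed workaround: lift to the homogenizing cones $\Hom{T_i}$ in $\R^{d+1}$ and apply Lemma~\ref{lem:ho-decomp} with the point $-\Hom{q}$, which turns each $\ihalf{q}{T_i}$ into an ordinary $\half{-\Hom{q}}{\Hom{T_i}}$. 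With this lemma in hand, the identity $\sum_i \dvol(nS^\vee_i)=\dvol(\relint(nP))$ is immediate, and no second auxiliary point is needed.
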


McMullen~\cite{mcmullenEuler} generalized this result to all
$\Lambda$-valuations as follows.

\begin{thm}\label{thm:reciprocity}
    Let $\phi : \PolL \rightarrow G$ be a $\LL$-valuation and $P \in \PolL$.
    Then
    \[
        (-1)^{\dim P}\phi_P(-n) \ = \ \phi(\relint(-nP)).
    \]        
\end{thm}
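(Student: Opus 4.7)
The plan is to first establish the identity for $\LL$-simplices via the hypersimplex expansion of Proposition~\ref{prop:ho-poly}, and then extend to arbitrary polytopes via a triangulation identity. Both sides of the claim are polynomials in $n$ (the left-hand side directly by Theorem~\ref{thm:phi_poly}, and the right-hand side as a $\Z$-linear combination of polynomials $\phi_{-F}(n)$ via \eqref{eqn:phi_relint}), so it suffices to verify equality as polynomials.

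\emph{Simplex case.} Let $S$ be a $d$-dimensional $\LL$-simplex with vertices $v_1,\ldots,v_{d+1}$, and write $\Pi_j(S)$ for its $j$-th closed hypersimplex as in the proof of Proposition~\ref{prop:ho-poly}. Substituting $n \mapsto -n$ in $\phi_S(n) = \sum_{j=0}^d \phi(\Pi_j(S))\binom{n+d-j}{d}$, using the identity $\binom{-n+d-j}{d} = (-1)^d \binom{n+j-1}{d}$, and reindexing $k = d+1-j$, I obtain
\[
    (-1)^d\phi_S(-n) \ = \ \sum_{k=1}^{d+1}\phi(\Pi_{d+1-k}(S))\binom{n+d-k}{d}.
\]
The key geometric step is the involution $\mu \mapsto \mathbf{1}-\mu$ on the fundamental parallelepiped, which sends $\Pi_{d+1-k}(S)$ to a translate of $-\Pi_k^{\circ}(S) = \Pi_k^{\circ}(-S)$, where $\Pi_k^{\circ}(-S)$ denotes the ``fully open'' $k$-th hypersimplex of $-S$ (conditions $\mu_i\in(0,1]$ for all $i$). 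Translation invariance of $\phi$ turns the sum into $\sum_{k=1}^{d+1}\phi(\Pi_k^{\circ}(-S))\binom{n+d-k}{d}$. Repeating the cone argument of Proposition~\ref{prop:ho-poly} verbatim for the open cone generated by $(-v_i,1)$ under the conditions $\mu_i > 0$---whose slice $H_n \cap C^{\circ}$ is exactly $\relint(-nS)$---produces the identical expansion for $\phi(\relint(-nS))$, completing the simplex case.

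\emph{General case.} For $P \in \PolL$ of dimension $d$, fix any triangulation $\Triang$ of $P$ into $\LL$-simplices. The disjoint decomposition $P = \biguplus_{\sigma \in \Triang}\relint(\sigma)$ gives, as polynomials in $n$, $\phi_P(n) = \sum_\sigma \phi(\relint(n\sigma))$. Applying the simplex reciprocity just established to $-\sigma$ yields $\phi(\relint(n\sigma)) = (-1)^{\dim\sigma}\phi_{-\sigma}(-n)$, so $\phi_P(n) = \sum_\sigma(-1)^{\dim\sigma}\phi_{-\sigma}(-n)$; substituting $n\mapsto -n$ and multiplying by $(-1)^d$ gives $(-1)^d\phi_P(-n) = \sum_\sigma(-1)^{d-\dim\sigma}\phi_{-\sigma}(n)$. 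For the right-hand side, I apply the general identity $\phi(\relint Q) = \sum_{\sigma'\in\Triang'}(-1)^{\dim Q-\dim\sigma'}\phi(\sigma')$---valid for any triangulation $\Triang'$ of $Q$ and deducible from \eqref{eqn:phi_relint} by an Euler-characteristic computation on stars of faces---to $Q = -nP$ triangulated by $-n\Triang$, which yields the same sum.

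The subtlest step is the direct cone decomposition of $\relint(-nS)$ in the simplex case. Since $\relint(-nS)$ is not a closed polytope in $\PolL$, $\phi$ must be applied through the extended definition~\eqref{eqn:phi_relint}, and one must verify that the partly-open parallelepiped pieces $\Pi_k^{\circ}(-S)$ combine additively under lattice translations---something that can be checked face-by-face using the same inclusion--exclusion machinery already deployed in Section~\ref{sec:halfopen}.
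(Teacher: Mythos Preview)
Your argument is correct, and the simplex step is essentially identical to the paper's: both expand $\phi_S$ in the hypersimplex basis and use the involution $\mu\mapsto\mathbf{1}-\mu$ on the fundamental parallelepiped to pass from the closed slices $\Pi_j(S)$ to the fully-open slices $\Pi^\circ_k(-S)$. The paper phrases this as a reduction to $n=1$ (via $\phi_{\half{q}{P}}(n)=\phi_{n\half{q}{P}}(1)$) and then simply observes that $\Pi_d$ is a translate of $-\ihalf{q}{P}$; you instead rerun the cone argument for the open cone to get the expansion of $\phi(\relint(-nS))$ directly. Same idea, different packaging.

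Where you genuinely diverge is in the extension to arbitrary $P$. The paper first proves the stronger half-open reciprocity $(-1)^{\dim P}\phi_{\half{q}{P}}(-1)=\phi(-\ihalf{q}{P})$ (Theorem~\ref{thm:halfopenrec}) and passes from simplices to general $P$ via a \emph{disjoint} half-open dissection together with the complementary decomposition of Lemma~\ref{lem:ho-decomp-complement}; no alternating signs appear. You instead take a simplicial-complex triangulation $\Triang$ of $P$ and use the M\"obius-type identity
\[
    \phi(\relint Q)\;=\;\sum_{\sigma\in\Triang'} (-1)^{\dim Q-\dim\sigma}\,\phi(\sigma),
\]
which you correctly say follows from an Euler-characteristic computation on links (interior faces have spherical links contributing $1$, boundary faces have ball links contributing $0$). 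This is a perfectly legitimate classical route and has the virtue of bypassing Lemma~\ref{lem:ho-decomp-complement} altogether; the price is that you must carry the sign bookkeeping and must take $\Triang$ to be a genuine polytopal triangulation restricting to each face of $P$, not merely a dissection. The paper's route stays sign-free and additive throughout, in keeping with its half-open philosophy, and as a byproduct yields the more general half-open statement.
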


In this section we succumb to the temptation to give a simple proof of
Theorem~\ref{thm:reciprocity} using the machinery of half-open decompositions
developed in Section~\ref{sec:halfopen}. As a corollary we obtain McMullen's
multivariate version of Theorem~\ref{thm:phi_poly} for Minkowski sums
$\phi(n_1 P_1 + \cdots + n_k P_k)$ and, from the perspective of weight
valuations, we give a multivariate Ehrhart--Macdonald reciprocity
(Theorem~\ref{thm:multireciprocity}). This section is not necessary for the remainder of the
paper and can, if necessary, be skipped.

We start with a generalization of Lemma~\ref{lem:ho-decomp}. Let $P \subset
\R^d$ be a full-dimensional polytope with facets $F_1,\dots,F_m$. 
For a general point $q \in \R^d$, we defined $I_q(P) = \{ i \in [m] : q \text{
beyond } F_i\}$ which led us to the definition of half-open polytopes. We now
define 
\[
    \ihalf{q}{P} \ := \ P \setminus \bigcup \{ F_i : i \not\in I_q(P) \} \ = \
    P \setminus \overline{\partial \half{q}{P}}.
\]
In a more general setting the relation between $\half{q}{P}$ and
$\ihalf{q}{P}$ was studied in~\cite{AdiprasitoS}.

\begin{lem}\label{lem:ho-decomp-complement}
    Let $P = P_1 \cup P_2 \cup \cdots \cup P_k$ be a dissection and $q$
    general with respect to all $P_i$. Then
    \[
    \ihalf{q}{P} \ = \ 
    \ihalf{q}{P_1} \uplus
    \ihalf{q}{P_2} \uplus
    \cdots \cup
    \ihalf{q}{P_k}.
    \]
\end{lem}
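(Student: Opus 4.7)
The plan is to prove the two inclusions separately, paralleling the proof of Lemma~\ref{lem:ho-decomp} but with the direction $-(q-p)$ in place of $q-p$. The basic geometric observation is that $p \in \ihalf{q}{Q}$ (for a full-dimensional polytope $Q$) is equivalent to: for every facet $F'$ of $Q$ containing $p$, the outer normal $n'$ of $Q$ at $F'$ satisfies $\inner{n'}{q-p} > 0$ (vacuously true when $p \in \relint(Q)$). Note also that genericity of $q$ with respect to every $P_i$ implies genericity with respect to $P$, since each facet hyperplane of $P$ is the affine span of some facet of some $P_i$.

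For the inclusion $\ihalf{q}{P_i} \subseteq \ihalf{q}{P}$: Let $p \in \ihalf{q}{P_i}$, and suppose that $p$ lies on some facet $F_\ell$ of $P$ with outer normal $n_\ell$. The inclusion $P_i \subseteq P$ yields the inclusion of outer normal cones $N_p P \subseteq N_p P_i$, so $n_\ell \in N_p P$ is a nonnegative combination of the outer normals $n'$ of $P_i$ at facets through $p$. By hypothesis each $\inner{n'}{q-p} > 0$, whence $\inner{n_\ell}{q-p} > 0$, i.e., $q$ is beyond $F_\ell$. Thus $p$ lies on no beneath facet of $P$, showing $p \in \ihalf{q}{P}$.

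For the reverse inclusion $\ihalf{q}{P} \subseteq \bigcup_i \ihalf{q}{P_i}$: Given $p \in \ihalf{q}{P}$, set $p'' := p - \eps(q - p)$. Every facet $F_\ell$ of $P$ through $p$ is a beyond facet (since $p \in \ihalf{q}{P}$), so $\inner{n_\ell}{p''-p} = -\eps \inner{n_\ell}{q-p} < 0$, which places $p''$ in $\relint(P)$ for all small $\eps > 0$. The dissection property together with the genericity of $q$ with respect to each $P_i$ (which rules out $-(q-p)$ being parallel to any facet hyperplane of any $P_i$ through $p$) yield an index $i_0$ and $\eps_0 > 0$ with $p'' \in \relint(P_{i_0})$ for all $0 < \eps < \eps_0$. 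Since $P_{i_0}$ is closed, $p \in P_{i_0}$; and for every facet $F'$ of $P_{i_0}$ through $p$ with outer normal $n'$, the membership $p'' \in \relint(P_{i_0})$ gives $\inner{n'}{p''-p} < 0$, hence $\inner{n'}{q-p} > 0$. Therefore $p \in \ihalf{q}{P_{i_0}}$.

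The main obstacle I anticipate is handling points $p$ that lie on low-dimensional faces of some $P_i$ (rather than on an actual facet of it): a naive facet-by-facet matching between facets of $P$ and facets of $P_i$ breaks down there, and one must instead rely on the cone inclusion $N_p P \subseteq N_p P_i$ in the first direction, and on the generic choice of $i_0$ via the assumption on $q$ in the second direction, to make $p''$ land inside the relative interior of a single $P_{i_0}$.
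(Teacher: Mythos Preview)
Your proof is correct and self-contained, but the paper takes a rather different and shorter route. Instead of arguing the two inclusions directly with normal cones and perturbation, the paper reduces the lemma to Lemma~\ref{lem:ho-decomp} via homogenization: for each polytope $Q$ one forms the cone $\Hom{Q} = \{(x,t) : t \ge 0,\ x \in tQ\} \subset \R^{d+1}$ and sets $\Hom{q} = (q,1)$. A short computation shows that a facet $F_i$ of $Q$ is \emph{beneath} $q$ precisely when the corresponding facet $\Hom{F_i}$ of $\Hom{Q}$ is \emph{beyond} $-\Hom{q}$; hence slicing at height $1$ gives $\ihalf{q}{Q} = \rho(\half{-\Hom{q}}{\Hom{Q}})$. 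Applying Lemma~\ref{lem:ho-decomp} to the dissection $\Hom{P} = \Hom{P_1} \cup \cdots \cup \Hom{P_k}$ with the general point $-\Hom{q}$ and then slicing yields the claim in one stroke. The paper's argument is slick and conceptual, explaining \emph{why} the two half-open constructions are dual to each other, at the cost of tacitly extending Lemma~\ref{lem:ho-decomp} from polytopes to polyhedral cones. Your argument is more elementary and stays entirely in $\R^d$; the use of the normal-cone inclusion $N_pP \subseteq N_pP_i$ to handle points on low-dimensional faces is exactly the right tool for the first inclusion, and your perturbation in the direction $-(q-p)$ is the natural mirror of the argument in Lemma~\ref{lem:ho-decomp}.
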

\begin{proof}
    For a polytope $P \subset \R^d$, define the homogenization $\Hom{P} :=
    \{ (x,t) : t \ge 0, x \in t P \}$. This is a polyhedral cone and $P$ can be 
    identified with $\rho(\Hom{P}) := \{ (x,1) \in \Hom{P}\}$. Let $\Hom{q} 
    =\binom{q}{1}$. Then $\ihalf{q}{P_i} = \rho(\half{-\Hom{q}}{\Hom{P_i}})$.
    Applying Lemma~\ref{lem:ho-decomp} with $-\Hom{q}$ to
    \[
        \Hom{P} \ = \  
        \Hom{P}_1 \cup
        \cdots \cup
        \Hom{P}_k
    \]
    then proves the claim.
\end{proof}

The following reciprocity is a simple extension of Stanley's result for
reciprocal domains; see~\cite{stanley74}.  Observe that for $q \in
\relint(P)$, we get $\ihalf{q}{P} = \relint(P)$ and hence the following
theorem subsumes Theorem~\ref{thm:reciprocity}.

\begin{thm}\label{thm:halfopenrec}
    Let $P$ be a $\LL$-polytope, and $\phi$ be a $\LL$-valuation. Then
    \[
    (-1)^{\dim P}\phi _{\half{q}{P}}(-n) \ = \ \phi (-n\ihalf{q}{P}).
    \]
\end{thm}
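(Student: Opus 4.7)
My plan is to reduce to the case where $P$ is a single $\LL$-simplex via half-open dissection, and then carry out an explicit cone/parallelepiped computation on both sides. For the reduction: dissect $P = S_1 \cup \cdots \cup S_k$ into $r$-dimensional $\LL$-simplices (with $r = \dim P$) and pick $q \in \relint P$ general with respect to each $S_i$. Lemma~\ref{lem:ho-decomp} gives the polynomial identity $\phi_{\half{q}{P}}(n) = \sum_i \phi_{\half{q}{S_i}}(n)$, which persists at $n \mapsto -n$. For the right-hand side, I would homogenize to polyhedral cones and apply Lemma~\ref{lem:ho-decomp} to the dissection $\Hom{-P} = \bigcup \Hom{-S_i}$ with an appropriate general point (as in the proof of Lemma~\ref{lem:ho-decomp-complement}); slicing at height $n$ yields the disjoint decomposition $n(-\ihalf{q}{P}) = \biguplus_i n(-\ihalf{q}{S_i})$, hence additivity of $\phi$. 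So it suffices to treat a single $\LL$-simplex.

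For the simplex case, let $S = \conv(v_1,\dots,v_{r+1})$, set $I := I_q(S)$, and let $\Pi$ be the fundamental parallelepiped from the proof of Proposition~\ref{prop:ho-poly} with level slices $\Pi_j$ and $h_j := \phi(\Pi_j)$. That proof gives $\phi_{\half{q}{S}}(n) = \sum_{j=0}^r h_j \binom{n+r-j}{r}$, and the binomial identity $\binom{-m}{r} = (-1)^r \binom{m+r-1}{r}$ yields
\[
(-1)^r \phi_{\half{q}{S}}(-n) \ = \ \sum_{j=0}^r h_j \binom{n+j-1}{r}.
\]
I would then apply the same cone decomposition to the half-open simplex $-\ihalf{q}{S}$ (whose open facet set is $J := [r+1] \setminus I$), obtaining level slices $\tilde\Pi_k$ of its fundamental parallelepiped $\tilde\Pi$. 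The key observation is that the coordinate involution $\mu \mapsto \mathbf{1} - \mu$, combined with the reflection $v_i \mapsto -v_i$, identifies $\tilde\Pi_k$ with the translate $-V + \Pi_{r+1-k}$, where $V := v_1 + \cdots + v_{r+1} \in \LL$; by translation-invariance, $\phi(\tilde\Pi_k) = \phi(\Pi_{r+1-k})$. Re-indexing with $j = r+1-k$ produces
\[
\phi(-n\,\ihalf{q}{S}) \ = \ \sum_k \phi(\tilde\Pi_k)\binom{n+r-k}{r} \ = \ \sum_j h_j \binom{n+j-1}{r},
\]
which matches the previous display.

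The main obstacle is the boundary case $I = \emptyset$: then $-\ihalf{q}{S} = -\relint S$ has \emph{all} facets open, so the corresponding cone is fully open rather than half-open and falls outside the scope of Proposition~\ref{prop:ho-poly}. For this case I would redo the cone decomposition by hand, picking up an additional top slice $\tilde\Pi_{r+1}$ which (under the involution) corresponds to $\Pi_0$ and contributes precisely the otherwise-missing $h_0 \binom{n-1}{r}$ term. For $I \neq \emptyset$, both boundary slices $\Pi_0$ and $\tilde\Pi_{r+1}$ are empty, so Proposition~\ref{prop:ho-poly} applies directly on both sides and the range bookkeeping works automatically. As a pleasant byproduct, the $I = \emptyset$ case simultaneously establishes Theorem~\ref{thm:reciprocity}.
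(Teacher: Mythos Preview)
Your approach is correct and follows the same overall strategy as the paper: reduce to simplices via half-open dissection (Lemmas~\ref{lem:ho-decomp} and~\ref{lem:ho-decomp-complement}), then handle the simplex case through the parallelepiped slicing of Proposition~\ref{prop:ho-poly}. The difference lies in the simplex step. The paper first observes that $\phi_{\half{q}{P}}(n) = \phi_{n\half{q}{P}}(1)$, so it suffices to prove the identity at $n=1$ for every half-open $\LL$-polytope. At $n=1$, formula~\eqref{eqn:halfopenpoly} evaluated at $-1$ collapses to the single surviving term $\phi(\Pi_d)$, and one checks directly that $\Pi_d$ is a $\LL$-translate of $-\ihalf{q}{S}$---which is precisely your involution $\mu \mapsto \mathbf{1}-\mu$ applied only at the top level. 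This reduction to $n=1$ bypasses both the second application of Proposition~\ref{prop:ho-poly} to $-\ihalf{q}{S}$ and the separate treatment of the fully-open case $I=\emptyset$, since the identification of $\Pi_d$ holds uniformly regardless of $I$. Your level-by-level matching $\tilde\Pi_k \cong \Pi_{r+1-k}$ is correct and a nice structural observation in its own right, but the $n=1$ reduction is the shortcut you are missing.
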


\begin{proof}
    Since $\phi_{\half{q}{P}}(n) = \phi_{n\half{q}{P}}(1)$, we only have to
    prove that $(-1)^{\dim P}\phi_{\half{q}{P}}(-1) = \phi(-\ihalf{q}{P})$.
    Let us first assume that $P$ is a simplex of dimension $d$. With the
    notation taken from the proof of Proposition~\ref{prop:ho-poly} and
    equation \eqref{eqn:halfopenpoly} we obtain
    \begin{equation}\label{eqn:halfopenrec}
        (-1)^{\dim P}\phi_{\half{q}{P}}(-n) \ = \ 
        \phi(\Pi_0) \binom{n-1}{d} + 
        \phi(\Pi_1) \binom{n}{d} + 
        \cdots + 
        \phi(\Pi_d) \binom{n+d-1}{d},
    \end{equation}
    where $\Pi_i = \Pi_i(\half{q}{P})$ and we used the identity $(-1)^b
    \binom{-a + b}{b} = \binom{a-1}{b}$.  Thus,
    \[
        (-1)^{\dim P}\phi_{\half{q}{P}}(-1)=\phi (\Pi_d)=\phi(-\ihalf{q}{P}),
    \]
    since $\Pi _d$ is a translate of $-\ihalf{q}{P}$. Now, let $P$ be an
    arbitrary $\LL$-polytope, and let $P=T_1\cup \cdots \cup T_k$ be a
    dissection into $\LL$-simplices. Then
    \begin{eqnarray*}
        (-1)^{\dim P}\phi_{\half{q}{P}}(-1) & = & 
        (-1)^{\dim P}\left( \phi_{\half{q}{T_1}}(-n)+ \cdots +
            \phi_{\half{q}{T_k}}(-n) \right)\\
            & = & \phi(-\ihalf{q}{T_1}) + \cdots + \phi(-\ihalf{q}{T_k}) \\
            & = &\phi(-\ihalf{q}{P})
    \end{eqnarray*}
    by Lemma \ref{lem:ho-decomp-complement}.
\end{proof}

\begin{cor}\label{cor:reciprocity_weights}
    Let $N_{\wSys}$ be a translation-invariant weight valuation and $P$ be a 
    lattice polytope. Then, also $P\mapsto (-1)^{\dim P}N_{\wSys} 
    (-\relint P)$ 
    is a weight valuation, and
    \[
    (-1)^{\dim P}(N_{\wSys})_P (-n)=\sum_{p\in \Z^d}\nu_p(\relint(-nP)).
    \]
\end{cor}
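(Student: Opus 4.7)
The plan is to deduce both assertions from Theorem~\ref{thm:reciprocity} and Theorem~\ref{thm:lat_weight_rep}, leveraging the fact that $N_{\wSys}$ itself is translation-invariant.

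For the displayed identity, apply Theorem~\ref{thm:reciprocity} to $\phi = N_{\wSys}$, obtaining
\[
    (-1)^{\dim P}(N_{\wSys})_P(-n) \;=\; N_{\wSys}(\relint(-nP)).
\]
Now expand the right-hand side by the inclusion-exclusion definition of $\phi(\relint(\cdot))$ and substitute $N_{\wSys}(G) = \sum_p\nu_p(G)$ on each face $G$ of $-nP$. Interchanging the outer finite face-sum with $\sum_p$ recognizes the result as $\sum_{p\in\Z^d}\nu_p(\relint(-nP))$; the interchange is legitimate because the defining hypothesis on $\wSys$ requires $\sum_p\nu_p$ to be defined on every polytope, so only finitely many weights contribute on each face.

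For the weight-valuation claim, set $\Psi(P) := (-1)^{\dim P}N_{\wSys}(\relint(-P))$, which by the $n=1$ instance of the identity just proved equals $(N_{\wSys})_P(-1)$. The plan is to verify that $\Psi$ is itself a translation-invariant $\Z^d$-valuation, so that Theorem~\ref{thm:lat_weight_rep} produces a system of weights representing it (after embedding $G$ in its divisible hull if needed). Translation invariance is inherited directly from $N_{\wSys}$: the polynomial $(N_{\wSys})_{P+t}(n) = N_{\wSys}(nP + nt) = N_{\wSys}(nP)$ is unchanged for $t\in\Z^d$, hence so is its evaluation at $-1$. For the valuation property, Theorem~\ref{thm:phi_poly} gives that $(N_{\wSys})_P$ has degree at most the ambient dimension $d$ for every $P$, so Lagrange interpolation through $n=0,1,\dots,d$ writes
\[
    \Psi(P) \;=\; \sum_{k=0}^{d} c_k\, N_{\wSys}(kP)
\]
with fixed rational coefficients $c_k$ independent of $P$; each term $P\mapsto N_{\wSys}(kP)$ is a valuation because dilation by $k\ge 0$ commutes with unions and intersections.

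The subtle step is this verification that $\Psi$ is itself a valuation. One would like simply to declare the evident candidates $\tilde\nu_p(P) := (-1)^{\dim P}\nu_p(\relint(-P))$ to be the new weights, and they do sum to $\Psi(P)$ by the first part; however, checking that each $\tilde\nu_p$ is a valuation directly from the face lattices is awkward, because the indicators ``$F$ is a face of $P_i$'' do not cancel coefficient-wise across $P_1,P_2,P_1\cap P_2,P_1\cup P_2$. The Lagrange detour circumvents this obstacle by operating on the translation-invariant aggregate $N_{\wSys}$ and invoking Theorem~\ref{thm:lat_weight_rep} to extract a system of weights a posteriori.
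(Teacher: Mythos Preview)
Your argument is correct and matches what the paper leaves implicit: the corollary is stated without proof, as an immediate consequence of reciprocity (Theorem~\ref{thm:reciprocity}/\ref{thm:halfopenrec}) together with the representation result Theorem~\ref{thm:lat_weight_rep}, which is precisely the route you take. One minor streamlining: instead of Lagrange interpolation you can argue directly that the polynomial identity $(N_{\wSys})_{P_1\cup P_2}(n)+(N_{\wSys})_{P_1\cap P_2}(n)=(N_{\wSys})_{P_1}(n)+(N_{\wSys})_{P_2}(n)$ holds for all $n\ge 0$ (since dilation respects $\cup,\cap$) and hence for $n=-1$, which avoids the detour through divisible hulls when checking that $\Psi$ is a valuation.
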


\subsection{Multivariate Ehrhart--Macdonald reciprocity}
\label{ssec:mult_reciprocity}

A multivariate version of Theorem \ref{thm:phi_poly} was given by 
Bernstein~\cite{Bernstein} for the discrete volume and by
McMullen~\cite{mcmullenEuler} for general $\LL$-valuations.

\begin{thm}[{\cite[Theorem~6]{mcmullenEuler}}]\label{Thm:multipolynomial}
    Let $\phi : \PolL \rightarrow G$ be a $\LL$-valuation and let
    $P_1,\ldots ,P_k \in \PolL$. Then the function
    \[
        \phi_{P_1,\dots,P_k}(n_1,\dots,n_k) \ = \ \phi( n_1 P_1 + \cdots + n_k
        P_k)
    \]
    agrees with a polynomial of total degree at most 
    $\dim P_1+\cdots + P_k$ for all $n_1,\dots,n_k \ge 0$.
\end{thm}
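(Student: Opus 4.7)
My strategy is to reduce the multivariate statement to the univariate Theorem~\ref{thm:phi_poly} by freezing all but one of the variables, then assemble the resulting separate polynomiality into a joint polynomial and finally sharpen the total degree bound.

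First I would fix $n_2,\ldots,n_k \in \Z_{\ge 0}$ and set $Q := n_2 P_2 + \cdots + n_k P_k \in \PolL$. By the discussion preceding Theorem~\ref{thm:phi_poly}, the map $\Mink{\phi}{Q}(R) = \phi(R+Q)$ is itself a $\LL$-valuation. Applying Theorem~\ref{thm:phi_poly} to $\Mink{\phi}{Q}$ and $P_1$ shows that $n_1 \mapsto \phi(n_1 P_1 + Q)$ agrees with a polynomial in $n_1$ of degree at most $\dim P_1$. By symmetry, $\phi_{P_1,\ldots,P_k}$ is polynomial in each variable $n_i$ separately, of degree at most $D_i := \dim P_i$, when the other variables are held fixed.

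Second, I would upgrade this to joint polynomiality via finite differences. Let $\Delta_i$ be the forward difference operator in $n_i$. The previous step implies $\Delta_i^{D_i+1}\phi_{P_1,\ldots,P_k} \equiv 0$ on $\Z_{\ge 0}^k$ for every $i$. A standard calculation with the basis $\prod_i \binom{n_i}{\alpha_i}$ shows that any $f:\Z_{\ge 0}^k \to G$ annihilated by each $\Delta_i^{D_i+1}$ admits a unique expansion
\[
    f(n_1,\ldots,n_k) \ = \ \sum_{0 \le \alpha_i \le D_i} c_\alpha \binom{n_1}{\alpha_1}\cdots\binom{n_k}{\alpha_k}
\]
with $c_\alpha \in G$, obtained by iterated evaluation $c_\alpha = (\Delta_1^{\alpha_1}\cdots\Delta_k^{\alpha_k}f)(0)$. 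Thus $\phi_{P_1,\ldots,P_k}$ agrees with a polynomial, of total degree at most $\sum D_i$.

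Third, to sharpen the bound to $D := \dim(P_1 + \cdots + P_k)$, I would restrict $f := \phi_{P_1,\ldots,P_k}$ to arbitrary integer rays. For any $c = (c_1,\ldots,c_k) \in \Z_{>0}^k$, the function $m \mapsto f(c_1 m,\ldots,c_k m) = \phi\bigl(m(c_1 P_1 + \cdots + c_k P_k)\bigr)$ is a polynomial in $m$ of degree at most $D$ by Theorem~\ref{thm:phi_poly}, since $\dim(c_1 P_1 + \cdots + c_k P_k) = D$ for $c_i > 0$. If $f$ had a nonzero homogeneous component $f_{D'}$ of degree $D' > D$, then $f_{D'}(c_1,\ldots,c_k)$ would govern the $m^{D'}$-coefficient of $f(cm)$; a standard argument (choosing $c$ of the form $(1,t,t^M,t^{M^2},\ldots)$ for a sufficiently large integer $M$ to separate all monomials of degree $D'$) forces every coefficient of $f_{D'}$ to vanish, contradicting $f_{D'} \ne 0$.

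The main obstacle is the last step when $G$ is an abelian group with torsion, since the separation-of-monomials argument is inherently a statement over a torsion-free ring. The cleanest remedy is to embed $G$ into its divisible hull $\Q \otimes_\Z G$, exactly as in the proof of Lemma~\ref{lem:ext_trans}, where all polynomial-identity arguments apply verbatim; the bound on the total degree then descends to the original $G$-valued polynomial by uniqueness of the $c_\alpha$. The first two steps are essentially formal, relying only on Theorem~\ref{thm:phi_poly} and the translation-invariance of the Minkowski construction $\Mink{\phi}{Q}$.
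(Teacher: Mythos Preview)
Your argument is correct and your degree-bound step is actually more careful than the paper's, but the route to joint polynomiality differs. The paper proceeds by induction on $k$: once $\Mink{\phi}{P_k}(n_1 P_1 + \cdots + n_{k-1} P_{k-1})$ is known (by induction) to be a polynomial in $n_1,\ldots,n_{k-1}$, the assignment $P_k \mapsto (\text{this polynomial})$ is itself a $\LL$-valuation taking values in the abelian group $G[n_1,\ldots,n_{k-1}]$, so a single further application of Theorem~\ref{thm:phi_poly} yields polynomiality in $n_k$ directly---no finite-difference interpolation is needed. Your separate-polynomiality-plus-interpolation route is equally valid but a touch longer; the paper's trick of enlarging the target group is what buys the shortcut. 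For the total-degree bound the paper simply restricts to the diagonal $n_1=\cdots=n_k=n$ and invokes Theorem~\ref{thm:phi_poly} for $P_1+\cdots+P_k$, without the monomial-separation argument you carry out. One caveat on your torsion remedy: the natural map $G \to \Q \otimes_\Z G$ is \emph{not} an embedding when $G$ has torsion (its kernel is exactly the torsion subgroup), so you cannot descend the degree bound along it as written; one would need the injective hull instead, or an argument via a universal (torsion-free) valuation group. The paper's diagonal argument is equally informal on this point, and for all applications in the paper $G$ is torsion-free anyway.
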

\begin{proof}
    For $k=1$, this is just Theorem~\ref{thm:phi_poly}. For $k > 1$, consider
    for fixed $P_k$ the $\LL$-valuation $\Mink{\phi}{P_k}$. By induction, 
    $\phi_{P_1,\dots,P_{k-1}}(P_k;n_1,\dots,n_{k-1}) :=
    \Mink{\phi}{P_k}(n_1P_1 + \cdots +
    n_{k-1}P_{k-1})$ is a polynomial in $n_1,\dots,n_{k-1}$. In particular, the map 
    \[
        P_k \ \mapsto \ \phi_{P_1,\dots,P_{k-1}}(P_k) \ := \
        \phi_{P_1,\dots,P_{k-1}}(P_k;n_1,\dots,n_{k-1}) \in G[n_1,\dots,n_{k-1}]
    \]
    is a $\LL$-valuation. Hence, again by Theorem~\ref{thm:phi_poly}, 
    \[
    (\phi_{P_1,\dots,P_{k-1}})_{P_k}(n_k) \ = \ \phi(n_1P_1 + \cdots + n_{k-1}P_{k-1} + n_k P_k) \
        \in \ G[n_1,\dots,n_{k-1}][n_k]
    \]
    is a multivariate polynomial. The total degree of
    $\phi_{P_1,\dots,P_k}(n_1,\dots,n_k)$
    is equal to the degree of $\phi_{P_1,\dots,P_k}(n,n,\dots,n) =
    \phi(n(P_1+\dots+P_k))$ in $n$ which, by Theorem~\ref{thm:phi_poly}, is 
    \mbox{$\le \dim(P_1 + \cdots + P_k)$}.

\end{proof}

Specializing Theorem \ref{Thm:multipolynomial} to the discrete volume yields
that for lattice polytopes $P_1,\dots,P_k \subset \R^d$ 
\[
    \dvol_{P_1,\dots,P_k}(n_1,\dots,n_k) \ = \ | (n_1P_1 + \cdots + n_kP_k) \cap
    \Z^d |
\]
agrees with a polynomial for all $n_1,\dots,n_k \ge 0$. Using
Ehrhart--Macdonald reciprocity (Theorem~\ref{thm:ehrhart_macdonald}), we can interpret $(-1)^r
\dvol_{P_1,\dots,P_k}(-n_1,\dots,-n_k)$ for $n_1,\dots,n_k \ge 0$ as the number
of lattice points in the relative interior of $P = n_1 P_1 + \cdots + n_k P_k$
where $r = \dim P$. This raises the natural question if there is a
combinatorial interpretation for the evaluation
\begin{equation}\label{eqn:gen_rec}
    \dvol_{P_1,\dots,P_k}(-n_1,\dots,-n_l,n_{l+1},\dots,n_k)
\end{equation}
for $n_1,\dots,n_k \ge 0$ and $1 < l < k$. The following example shows that
there cannot be a straightforward generalization of
Theorem~\ref{thm:ehrhart_macdonald}.

\begin{example}
    Let $P=[0,1]^2$ and $Q=[(0,0),(1,1)]$. Then
    \[
        \dvol _{P,Q}(n,m)=(n+1)^2+2nm+m.
    \]
    Therefore
    \begin{align*}
      \dvol _{P,Q}(-n,m) & \ < \ 0 \qquad \text{ for } 0 < n \ll m ,\\
      \dvol _{P,Q}(-n,m) & \ > \ 0  \qquad \text{ for } 0 < m \ll n .\\
    \end{align*}
\end{example}

However, from the perspective of weight valuations, we can give an
interpretation of~\eqref{eqn:gen_rec} in terms of the topology of certain
polyhedral complexes. We first note that for~\eqref{eqn:gen_rec}
\[
    \dvol_{P_1,\dots,P_k}(-n_1,\dots,-n_l,n_{l+1},\dots,n_k) \ = \
    \dvol_{P,Q}(-1,1) \ = \ \Mink{\dvol}{Q}_P(-1)
\]
where $P := n_1P_1 + \cdots +  n_l P_l$ and $Q = n_{l+1}P_{l+1} + \cdots +
n_kP_k$. Hence, it is sufficient to find an interpretation for
$\dvol_{P,Q}(-1,1)$ for general lattice polytopes $P, Q$.

For two polytopes $P,Q \subset \R^d$, the \Defn{$\boldsymbol{Q}$-complement} is 
the polyhedral complex
\[
    \Qcomp{Q}{P} \ := \ 
    \{F\subseteq P \text{ face} :  F \cap Q = \emptyset\}.
\]
Recall that the \Defn{reduced Euler characteristic} of a polyhedral complex
$\mathcal{K}$ is defined as $\Euler(\mathcal{K}) := \sum \{ (-1)^{\dim F} : F
\in \mathcal{K}\}$. 
Here is our generalization of Ehrhart--Macdonald
reciprocity to Minkowski sums of lattice polytopes.

\begin{thm}\label{thm:multireciprocity}
    Let $P,Q \subset \R^d$ be non-empty lattice polytopes. Then 
    \[
        P \mapsto \Euler(\Qcomp{Q}{P}) 
    \]
    defines a valuation on $\Pol{\Z^d}$ and 
    \begin{equation*}\label{eq:weights}
        \dvol_{P,Q}(-1,1) \ = \ - \sum _{p\in \Z^d} \Euler(\Qcomp{Q}{P + p}).
    \end{equation*}
\end{thm}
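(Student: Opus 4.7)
My plan is to apply the univariate reciprocity (Theorem~\ref{thm:reciprocity}) to the translation-invariant $\Z^d$-valuation $\Mink{\dvol}{Q}$, whose Ehrhart polynomial in $P$ is $(\Mink{\dvol}{Q})_P(n) = \dvol(nP+Q) = \dvol_{P,Q}(n,1)$. Evaluating at $n = -1$ and expanding $(\Mink{\dvol}{Q})(\relint(-P))$ via~\eqref{eqn:phi_relint}---substituting $F \mapsto -F$ on the faces of $-P$, with the empty-face term vanishing because $\dvol(Q - \emptyset) = 0$---gives
\[
\dvol_{P,Q}(-1,1) \;=\; \sum_{F \leq P}(-1)^{\dim F}\dvol(Q - F).
\]

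Next I would unpack $\dvol(Q-F)$ using the elementary equivalence $p \in Q - F \iff (F+p)\cap Q \ne \emptyset$, which gives $\dvol(Q-F) = \sum_{p \in \Z^d}\mathbf{1}[(F+p)\cap Q \ne \emptyset]$ as a finite sum. Swapping the order of summation and reindexing faces $F$ of $P$ as faces $F' = F+p$ of $P+p$ yields
\[
\dvol_{P,Q}(-1,1) \;=\; \sum_{p \in \Z^d}\;\sum_{\substack{F' \leq P+p\\ F' \cap Q \ne \emptyset}}(-1)^{\dim F'}.
\]
The Euler relation $\sum_{F' \leq R}(-1)^{\dim F'} = 0$---valid for any nonempty polytope $R$ when the empty face of dimension $-1$ is included---splits the faces of $P+p$ into those meeting $Q$ and those avoiding $Q$, the latter being exactly $\Qcomp{Q}{P+p}$. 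Hence the inner sum equals $-\Euler(\Qcomp{Q}{P+p})$, giving the asserted formula. The sum over $p$ is finite because whenever $(P+p) \cap Q = \emptyset$ the complex $\Qcomp{Q}{P+p}$ is the full face lattice of $P+p$, whose reduced Euler characteristic vanishes.

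For the valuation property of $P \mapsto \Euler(\Qcomp{Q}{P})$, the same Euler cancellation yields
\[
\Euler(\Qcomp{Q}{P}) \;=\; -\sum_{F \leq P}(-1)^{\dim F}\chi(F\cap Q),
\]
where $\chi(F\cap Q)$ equals $1$ if $F$ meets $Q$ and $0$ otherwise. The map $\mu_Q\colon R \mapsto \chi(R\cap Q)$ is itself a valuation on $\Pol{\Z^d}$, since Euler characteristic is a valuation and intersection with the fixed polytope $Q$ preserves the valuation property. Consequently $\Euler(\Qcomp{Q}{P}) = -(-1)^{\dim P}\mu_Q(\relint P)$, and the claim reduces to the general Macdonald/Sommerville principle that for any valuation $\mu$ on polytopes the assignment $P \mapsto \sum_{F \leq P}(-1)^{\dim F}\mu(F) = (-1)^{\dim P}\mu(\relint P)$ is again a valuation---an instance of the pointwise identity $(-1)^{\dim P}\mathbf{1}_{\relint P} = \sum_{F \leq P}(-1)^{\dim F}\mathbf{1}_F$ on $\R^d$, obtained by Möbius inversion on the face lattice. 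The main obstacle lies in this last reciprocity statement: because $\mu_Q$ is not translation-invariant Theorem~\ref{thm:reciprocity} does not apply, and one must verify the pointwise identity together with its valuation consequence by a case analysis of which of the four polytopes $P_1, P_2, P_1\cup P_2, P_1\cap P_2$ contain a given $x \in \R^d$ in their relative interior.
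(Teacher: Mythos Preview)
Your argument is correct and essentially parallels the paper's own proof. Both derive the identity by applying reciprocity to the translation-invariant valuation $\Mink{\dvol}{Q}$, then unpacking $\dvol(Q-F)$ as a lattice-point count, interchanging sums, and using the vanishing of the reduced Euler characteristic of the full face complex. The only cosmetic difference is that the paper routes the reciprocity through the weight-valuation representation $\Mink{\dvol}{Q}=N_{\wSys}$ with $\nu_p(P)=\chi\bigl((-p+P)\cap(-Q)\bigr)$ and Corollary~\ref{cor:reciprocity_weights}, whereas you invoke Theorem~\ref{thm:reciprocity} directly; the intermediate expressions are identical. Your treatment of the valuation claim is in fact more explicit than the paper's: you correctly isolate it as the statement that $P\mapsto(-1)^{\dim P}\mu_Q(\relint P)$ is a valuation for the (non--translation-invariant) valuation $\mu_Q=\Int{\chi}{Q}$, which is the standard Sommerville/Euler-map duality and does not follow from Theorem~\ref{thm:reciprocity} alone.
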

    
\begin{proof}
    Consider $\phi := \Int{\chi}{(-Q)}$ and define a system of
    weights $\wSys$ by \mbox{$\nu_p(P) := \phi(-p + P)$}. We have $\nu_p(P) = 1$ if
    and only if $(-p + P) \cap (-Q) \neq \emptyset$ if and only if $p \in P +
    Q$. Hence,
    \[
        \Mink{\dvol}{Q}(P) \ = \ \sum_{p \in \Z^d} \nu_p(P) \ = \ N_\wSys(P).
    \]
    By Corollary~\ref{cor:reciprocity_weights}, we obtain 
    \begin{eqnarray*}
        \Mink{\dvol}{Q}_P(-1)
        & = &\sum_{p\in \Z^d} (-1)^{\dim(P)} \Int{\chi}{(-Q)}(-(p + \relint
                P))\\
        & = & \sum_{p\in \Z^d}\sum \{ (-1)^{\dim F} : F\subseteq P \text{
        face}, (F+p) \cap Q\neq  \emptyset\}\\
        & = & - \sum_{p\in \Z^d}\Euler(\Qcomp{Q}{P+p})
    \end{eqnarray*}
    where the last equation follows from the fact that the complex of faces of
    $P$ has reduced Euler characteristic $=0$.
\end{proof}

For $Q = \{0\}$, we recover  Ehrhart--Macdonald reciprocity: For $p \in
\Z^d$, set 
\[
    \mathcal{C}_p := \Qcomp{Q}{-p+P} = \{ F \subseteq P \text{ face} : p
\not\in F\}.
\] 
For $p \in \relint(P)$, $\mathcal{C}_p$ is a sphere of dimension
$\dim P -1$. For $p \not\in P$ and $p \in \partial P$, the complex
$\mathcal{C}_p$ is a ball and hence $\Euler(\mathcal{C}_p) = 0$. Hence,
Theorem~\ref{thm:multireciprocity} yields
\[
    \dvol_P(-1) \ = \ \sum_{p \in \relint(P) \cap \Z^d} (-1)^{\dim P} \ = \
    (-1)^{\dim P} \dvol(\relint P).
\]

One could hope that the $Q$-complements are combinatorially well-behaved
(e.g. shellable, Cohen-Macaulay, Gorenstein, etc.), but it turns out that
$Q$-complements are universal.

\begin{prop}
    Let $\mathcal{C}$ be a simplicial complex. Then there are lattice
    polytopes $P$ and $Q$ such that 
    \[
        \mathcal{C} \ \cong \ \Qcomp{Q}{P}.
    \]
\end{prop}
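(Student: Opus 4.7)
The plan is to realize $\mathcal{C}$ as the ``non-face complement'' of a convex polytope $Q$ sitting inside a scaled standard simplex. Let $V = \{1,\dots,n\}$ be the vertex set of $\mathcal{C}$ and let $\mathcal{N}$ denote the collection of minimal non-faces of $\mathcal{C}$. I take
\[
    P \ := \ k\cdot \conv(e_1,\dots,e_n) \ \subset \ \R^n,
\]
whose faces are $kF_B := \conv(ke_i : i \in B)$ for $B \subseteq V$. Choosing the scaling $k$ to be divisible by $|A|$ for every $A \in \mathcal{N}$ (e.g.\ $k = n!$) ensures that the scaled barycenter $p_A := \frac{k}{|A|}\sum_{i \in A} e_i$ is a lattice point in $\relint(kF_A)$. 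I then set
\[
    Q \ := \ \conv\{p_A : A \in \mathcal{N}\},
\]
which is a lattice polytope in $\Z^n$.

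Since the faces of $P$ are in bijection with subsets $B \subseteq V$ via $B \mapsto kF_B$, proving $\mathcal{C} \cong \Qcomp{Q}{P}$ reduces to checking that $kF_B \cap Q = \emptyset$ if and only if $B \in \mathcal{C}$. A point of $Q$ has the form $\sum_{A \in \mathcal{N}} \mu_A p_A$ with $\mu_A \ge 0$ and $\sum \mu_A = 1$, and its set of non-zero coordinates is exactly $\bigcup\{A : \mu_A > 0\}$. Such a point lies in $kF_B$ precisely when this union is contained in $B$, which forces $A \subseteq B$ for every $A$ with $\mu_A > 0$. If $B \in \mathcal{C}$, then no minimal non-face $A$ can satisfy $A \subseteq B$, so $\mu_A = 0$ for all $A$, contradicting $\sum \mu_A = 1$; hence $Q \cap kF_B = \emptyset$. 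Conversely, if $B \notin \mathcal{C}$, then $B$ contains some $A \in \mathcal{N}$, and $p_A \in \relint(kF_A) \subseteq kF_B$ together with $p_A \in Q$ shows $Q \cap kF_B \neq \emptyset$.

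No serious obstacle arises; once the construction is set up, the verification is a short support-counting argument inside a simplex. The only technical point is ensuring that $Q$ is a \emph{lattice} polytope, which is precisely what dictates the choice of scaling $k$. Since $P$ is itself a simplex, the bijection $B \leftrightarrow kF_B$ (restricted to $B \in \mathcal{C}$) preserves the simplicial-complex structure, yielding the desired isomorphism $\mathcal{C} \cong \Qcomp{Q}{P}$.
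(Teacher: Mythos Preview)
Your proof is correct and follows essentially the same construction as the paper: realize $P$ as a scaled standard simplex and take $Q$ to be the convex hull of barycenters of non-faces, scaled to lie in the lattice. The only cosmetic differences are that the paper uses the barycenters of \emph{all} non-faces rather than just the minimal ones, and it defers the scaling (by $m!$) to the final sentence; your choice of minimal non-faces is a slight economy but changes nothing in the argument.
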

\begin{proof}
Let $\mathcal{C}$ be a simplicial complex on the vertex set $[m]$.  Let $P =
\conv(e_1,\dots,e_m)\subset \R^m$ be a lattice $(m-1)$-simplex. For $I\subseteq 
[m]$
let
\[
    w_I \ := \ \frac{1}{|I|}\sum _{i\in I} e_i
\]
be the barycenter of the face $F_I := \conv(e_i \colon i\in I) \subseteq P$.
Let $Q=\conv(w_I \colon I \not \in \mathcal{C})$. Then $F_I \cap Q = \emptyset$
if and only if $I \not\in \mathcal{C}$. Hence, $\Qcomp{Q}{P}$ is a geometric
realization of $\mathcal{C}$. Observing that $m!Q \subseteq m!P$ are lattice
polytopes finishes the proof.
\end{proof}

In particular, the weights appearing in Theorem \ref{thm:multireciprocity} can
be arbitrary. This, however, does not exclude the possibility that there are
combinatorial interpretations of $\dvol_{P,Q}(m,n)$ for certain regimes
$\mathcal{R} \subset \Z^2$ and it would certainly be interesting to find such
interpretations.

\section{Weak $h^*$-nonnegativity, monotonicity, and nonnegativity}
\label{sec:weak}

The Euler characteristic is a simple example of $\LL$-valuation that is not
combinatorially positive. Indeed, for a $r$-polytope $P \neq \emptyset$ we have
\[
    h^\chi_i(P) \ = \ (-1)^i \binom{r}{i}.
\]
In this section we consider a weaker notion than $h^*$-nonnegativity that
clarifies the relation of combinatorial positivity/monotonicity to the usual
nonnegativity and monotonicity of valuations. A $\LL$-valuation $\phi \in
\Vals(\LL,G)$ is \Defn{weakly combinatorially monotone} or \Defn{weakly
$h^*$-monotone} if $\phi(\{0\})\succeq 0$ and 
\[
    h^\phi_i (P) \ \preceq \ h^\phi_i (Q) 
\]
for all $\Lambda$-polytopes $P \subseteq Q$ such that $\dim (P)=\dim (Q)$.
Clearly, every combinatorially monotone valuation is also weakly
combinatorially monotone. Moreover, the Euler characteristic is weakly
$h^*$-monotone which also shows that weakly $h^*$-monotone does not imply
$h^*$-monotone. The main result of this section exactly characterizes the
weakly $h^*$-monotone valuations.

\begin{thm}\label{thm:h-weaknonneg}
    For a $\LL$-valuation $\phi : \PolL \rightarrow G$ into a partially
    ordered abelian group $G$, the following are equivalent:
    \begin{enumerate}[\rm (i)]
        \item $\phi$ is weakly $h^*$-monotone;
        \item $\phi(\relint(\Delta)) + \phi(\relint(F)) \succeq 0$ for every 
        $\LL$-simplex $\Delta$ and every facet $F$ of $\Delta$;
        \item $\phi (\HO{S}) \succeq 0$ for every half-open $\LL$-simplex
            $\HO{S}$.
    \end{enumerate}
\end{thm}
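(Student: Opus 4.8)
The plan is to run a cycle of implications mirroring the proof of Theorem~\ref{thm:h-nonneg}, namely (i) $\Rightarrow$ (ii) $\Rightarrow$ (iii) $\Rightarrow$ (i), with the half-open machinery of Section~\ref{sec:halfopen} doing the work throughout. The one new combinatorial input is this: deleting from an $r$-simplex $S$ all of its facets except one, keeping only the facet $F$, produces a half-open $r$-simplex whose surviving faces are exactly $S$ and $F$, so that this half-open simplex equals $\relint(S)\uplus\relint(F)$ and
\[
    \phi\bigl(S\setminus\textstyle\bigcup_{H\neq F}H\bigr) \ = \ \phi(\relint(S)) + \phi(\relint(F))
\]
for every $\LL$-valuation $\phi$. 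In particular, read with $S=\Delta$, this already gives (iii) $\Rightarrow$ (ii); the condition $\phi(\{0\})\succeq0$ — part of weak monotonicity, and the $0$-dimensional instance of (ii) and (iii) — travels along for free.

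For (i) $\Rightarrow$ (ii), fix an $r$-simplex $\Delta$ with a facet $G$ and let $a$ be the vertex of $\Delta$ opposite $G$. Reflecting $a$ through a vertex $m$ of $G$ gives $b:=2m-a\in\LL$, and $Q':=\conv(\Delta\cup\{b\})$ is again an $\LL$-simplex (the point $m$ lies on the segment $[a,b]$, hence is no longer a vertex), dissected into $\Delta$ and $T':=\conv(G\cup\{b\})$ along the common facet $G$. For $q\in\relint(T')$ general and in the interior of $Q'$ one has $\half{q}{\Delta}=\Delta\setminus G$, so Corollary~\ref{cor:h-add} gives $h^\phi(Q')=h^\phi(\Delta\setminus G)+h^\phi(T')$; since $T'\subseteq Q'$ and $\dim T'=\dim Q'=r$, weak monotonicity yields $h^\phi_r(\Delta\setminus G)=h^\phi_r(Q')-h^\phi_r(T')\succeq0$. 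By Corollary~\ref{cor:h-hyper} the top hypersimplex $\Pi_r(\Delta\setminus G)$ is, up to translation, $\relint(-\Delta)\uplus\relint(-G)$, so the inequality reads $\phi(\relint(-\Delta))+\phi(\relint(-G))\succeq0$; as every $\LL$-simplex is the reflection of an $\LL$-simplex, this is exactly (ii).

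For (ii) $\Rightarrow$ (iii), let $\HO{S}=S\setminus\bigcup_{i\in I}F_i$ be any half-open $\LL$-simplex, with $S=\conv(v_1,\dots,v_{r+1})$, $F_i$ the facet opposite $v_i$, and $I\subsetneq[r+1]$. Writing $G_J:=\conv(v_j:j\in J)$, the faces surviving in $\HO{S}$ are the $G_J$ with $\emptyset\neq J$ and $I\subseteq J\subseteq[r+1]$, so $\phi(\HO{S})=\sum_J\phi(\relint(G_J))$. Fixing an index $i_0\in[r+1]\setminus I$ and pairing each surviving $J$ with $J\triangle\{i_0\}$ matches $G_J$ (for $i_0\in J$) with its facet $G_{J\setminus\{i_0\}}$, leaving at most one face unpaired, a vertex; summing (ii) over the pairs and using $\phi(\{0\})\succeq0$ for the stray vertex gives $\phi(\HO{S})\succeq0$.

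Finally (iii) $\Rightarrow$ (i). Since weak monotonicity only concerns equal-dimensional pairs it suffices to take $\dim P=\dim Q=r$; dissecting $Q=T_1\cup\cdots\cup T_N$ with $P=T_{M+1}\cup\cdots\cup T_N$ and choosing $q\in\relint(P)$ general, Corollaries~\ref{cor:h-add} and~\ref{cor:h-hyper} give
\[
    h^\phi_i(Q)-h^\phi_i(P) \ = \ \sum_{j=1}^{M}\phi\bigl(\Pi_i(\half{q}{T_j})\bigr),
\]
and one triangulates the partly-open hypersimplices on the right and matches the resulting relatively open simplices into the facet--simplex pairs governed by (ii) (with isolated vertices absorbed by $\phi(\{0\})\succeq0$). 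This last step is the real obstacle: unlike in Theorem~\ref{thm:h-nonneg}, the individual summands $\phi(\Pi_i(\half{q}{T_j}))$ need not be $\succeq0$ — already for $\phi=\chi$ one has $\chi(\Pi_1(S))=-\dim S<0$, yet $h^\chi_i(Q)-h^\chi_i(P)=0$ because $h^\chi$ depends only on dimension — so the positivity is a property of the \emph{whole} sum, and the crux is to organise the open simplices of all the triangulations \emph{globally}, across the pieces $T_j$, into the pairs of (ii), uniformly in the dissection and in the index $i$.
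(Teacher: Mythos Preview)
Your arguments for (i)$\Rightarrow$(ii) and (ii)$\Rightarrow$(iii) are correct, though they differ from the paper's. For (i)$\Rightarrow$(ii) the paper compares the truncated pyramid $T=\overline{2\Delta\setminus\Delta}$ to $2\Delta$, whereas you build a larger simplex $Q'$ by reflecting a vertex; both routes land on the same inequality $h^\phi_r$ of a half-open simplex with one facet kept. For (ii)$\Rightarrow$(iii) the paper inducts on the number $f$ of present facets, peeling off one half-open facet at a time, while you do the bookkeeping in one shot via the involution $J\mapsto J\triangle\{i_0\}$ on the surviving face set. These are pleasant alternatives.

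The gap is in (iii)$\Rightarrow$(i). Your worry that the individual summands $\phi(\Pi_i(\half{q}{T_j}))$ can be negative is based on a miscalculation: the half-open simplices $\half{q}{T_j}$ for $j\le M$ are \emph{proper} (i.e.\ $I_q(T_j)\neq\emptyset$), because $q\in\relint(P)$ lies outside $T_j$. Your example $\chi(\Pi_1(S))=-\dim S$ takes $S$ \emph{closed}, which never occurs in this sum. The paper's key observation---which you missed---is that for a \emph{proper} half-open simplex $\HO{S}$ the partly-open hypersimplex $\Pi_i(\HO{S})$ is in fact a genuine \emph{half-open polytope}. Concretely, after an affine change putting $S=\{x\in V:x\ge0\}$ with $V=\{x_1+\cdots+x_r=1\}$ and $\HO{S}=\{x\in S:x_j>0\text{ for }j\in I\}$, one has
\[
    \Pi_i \ = \ \{x\in i\cdot V : x_j>0\ (j\in I),\ x_j<1\ (j\notin I)\} \ = \ \half{q'}{\overline{\Pi}_i}
\]
for a suitable general point $q'$ (take $q$ with $q_j<0$ for $j\in I$, $q_j>1$ for $j\notin I$, and $q'=i\cdot q$). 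Hence Lemma~\ref{lem:ho-decomp} dissects $\Pi_i$ into half-open $\LL$-simplices, and (iii) applied to each piece gives $\phi(\Pi_i(\half{q}{T_j}))\succeq0$ termwise. No global matching across the $T_j$ is needed; the positivity is local after all, once you know $I\neq\emptyset$.
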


\begin{proof}
    (i) $\Rightarrow$ (ii): Let $\Delta = \conv(v_0,\dots,v_r)$ be a
    $\LL$-simplex of dimension $r$. We can assume that $v_0=0$. If $r = 0$,
    then $\phi(\relint\Delta)\geq 0$ by definition. For $r > 0$, the truncated
    pyramid $T = \overline{2\Delta \setminus \Delta}$ is contained in
    $2\Delta$ and is of dimension $r$. Using that $\phi$ is weakly
    $h^*$-monotone, we obtain
    \[
        0 \ \preceq \ h^{\phi}_r(-2\Delta) - h^{\phi}_r(-T) 
        \ = \ \phi(\relint(2\Delta)) - \phi(\relint(T))
        \ = \ \phi(\relint(\Delta)) + \phi(\relint(F)),
    \]
    where $F$ denotes the facet opposite to $v_0 = 0$.

    (ii) $\Rightarrow$ (iii): Let $\HO{S}$ be a half-open simplex of dimension
    $r$ and let $f = f(\HO{S})$ be the number of facets present in $\HO{S}$.
    If $f=1$ or $r = 0$, then $\phi(\HO{S}) = \phi(\relint(S)) +
    \phi(\relint(F)) \succeq 0$ by (ii). For $f > 1$, let 
    $F \subset \HO{S}$ be a half-open facet. Then $T = \HO{S} \setminus F$ is a
    half-open simplex with $f(T) < f$ and, by induction on $f$ and $r$, we get
    \[
        \phi(\HO{S}) \ = \ \phi(T) + \phi(F) \ \succeq \ 0.
    \]

    (iii) $\Rightarrow$ (i): Let $P\subseteq Q$ be two $\LL$-polytopes with
    $r-1 = \dim P = \dim Q$. As in the proof of Theorem \ref{thm:h-nonneg}, we
    can choose a dissection $Q = T_1 \cup T_2 \cup \cdots \cup T_N$ of $Q$
    into $(r-1)$-dimensional $\LL$-simplices such that $P = T_{M+1} \cup
    T_{M+2} \cup \cdots \cup T_N$ for some $M < N$. For a point $q \in \relint
    P$ general with respect to all $T_i$, it follows from
    Corollary~\ref{cor:h-add} that
    \[
        h^\phi_i(Q) - h^\phi_i(P) \ = \ 
        h^\phi_i(\half{q}{T_1}) + \cdots + h^\phi_i(\half{q}{T_M}).
    \]
    It is thus sufficient to show 
    \[
        h^\phi_i(\HO{S}) \succeq 0
    \]
    for any \emph{proper half-open} $\LL$-simplex $\HO{S}$, that is, $\HO{S} =
    \half{q}{S}$ for some general $q \not\in S$. We will show, that the
    corresponding partly open hypersimplex $\Pi_i=\Pi_i(\HO{S})$ can be
    dissected into half-open simplices. By a change of coordinates, we can
    assume that $S = \{ x \in V : x \ge 0\}$, where $V
    = \{ x \in \R^r : x_1 + \cdots + x_r = 1\}$, and 
    \[
        \HO{S} \ = \ \{ x \in S : x_j > 0 \text{ for } j \in I \}
    \]
    with $I = I_q(S) \neq \emptyset$. We can also assume that the general
    point $q \in V$ satisfies $q_j > 1$ for $j \not\in I$. The corresponding
    $i$-th partly-open
    hypersimplex is 
    \[
        \Pi_i \ = \ \{ x \in i\cdot V : x_j > 0 \text{ for } j \in I, x_j < 1
        \text{ for } j \not\in I \} \ = \ \half{q'}{\overline{\Pi}_i}
    \]
    where $q' = i \cdot q$. Hence, $\Pi_i$ is a half-open polytope and
    after choosing a dissection $\overline{\Pi}_i = S_1 \cup \cdots \cup S_l$ into
    simplices, we obtain from Lemma~\ref{lem:ho-decomp}
    \[
    \Pi_i \ = \ 
    \half{q'}{S_1} \cup
    \cdots \cup
    \half{q'}{S_l}
    \]
    and thus,
    \[
        \phi(\Pi_i) \ = \ \sum_{l=1}^k \phi (H_{q'} (S_k))  \ \succeq \ 0.
        \qedhere
    \]

\end{proof}

A $\LL$-valuation is \Defn{monotone} if $\phi (P) \preceq \phi (Q)$ for all
$\LL$-polytopes $P \subseteq Q$ and \Defn{nonnegative} if $\phi (P) \succeq 0$
for all $P\in \PolL$.  Clearly, every monotone valuation is nonnegative
but the converse is in general not true as the following example shows. 

\begin{example}\label{ex:d2}
    For $\LL = \Z^2$, define the $\Z^2$-valuation  $b(P) := \dvol(P) -
    \vol_2(P) - \chi(P)$. If $\dim P \le 1$, then $b(P) = \vol_1(P)$.
    For $\dim P = 2$, $2b(P) = | \partial P \cap \Z^2|$.
     This is clearly a nonnegative 
    valuation. But as the
    following figure shows $b$ is not monotone.
    \begin{center}
        \includegraphics[width=3.5cm]{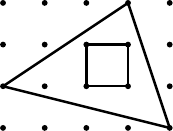}
    \end{center}
\end{example}

We call a $\LL$-valuation \Defn{weakly monotone} if $\phi(\{0\})\succeq 0$ and
$\phi(P) \preceq \phi (Q)$ for all $\Lambda$-polytopes $P \subseteq Q$ with $\dim
(P)=\dim(Q)$. It turns out, that monotonicity and weak monotonicity are in
fact equivalent.
\begin{prop}\label{prop:weakmonotonicity}
    Let $\phi$ be a $\LL$-valuation. Then $\phi$ is monotone if and only if
    $\phi$ is weakly monotone.
\end{prop}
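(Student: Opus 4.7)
The forward implication is immediate: if $\phi$ is monotone then $\phi(\emptyset)=0 \preceq \phi(\{0\})$, and restricting the defining inequality to the case $\dim P = \dim Q$ recovers weak monotonicity. The substance of the proposition is the converse.

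For the converse, fix $P \subseteq Q$ in $\PolL$ with $\phi$ weakly monotone. If $\dim P = \dim Q$ we are done by definition. Otherwise I build a chain $P = P^{(0)} \subsetneq P^{(1)} \subsetneq \cdots \subsetneq P^{(k)} \subseteq Q$ with $\dim P^{(i)} = \dim P + i$ and $\dim P^{(k)} = \dim Q$, by iteratively picking $q_i \in (Q \cap \LL) \setminus \aff P^{(i-1)}$ and setting $P^{(i)} := \conv(P^{(i-1)} \cup \{q_i\})$, exactly as in the proof of Theorem~\ref{thm:h-nonneg}. The top inclusion $P^{(k)} \subseteq Q$ is handled by weak monotonicity, so the whole problem reduces to the \emph{pyramid case}: for $B \in \PolL$ and $v \in \LL \setminus \aff B$, show $\phi(B) \preceq \phi(\conv(B \cup \{v\}))$.

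I would prove the pyramid case by induction on $r := \dim B$. For the base case $r = 0$, translation-invariance reduces us to $B = \{0\}$ and $P = [0,v]$; combining the valuation identity $\phi([0,2v]) = \phi([0,v]) + \phi([v,2v]) - \phi(\{v\}) = 2\phi([0,v]) - \phi(\{0\})$ with weak monotonicity $\phi([0,v]) \preceq \phi([0,2v])$ yields $\phi(\{0\}) \preceq \phi([0,v])$. For the inductive step, a crucial intermediate tool is that $\phi(B) \preceq \phi(B + [0,v])$ for any $B$ and $v$: the function $n \mapsto \phi(B + [0,nv])$ is a linear polynomial in $n$ (by the telescoping dissection $B + [0,nv] = \bigcup_{k=0}^{n-1}(kv + B + [0,v])$ with consecutive overlaps $(k+1)v + B$), hence equals $\phi(B) + n\bigl(\phi(B+[0,v]) - \phi(B)\bigr)$, and weak monotonicity on prisms of constant dimension $r+1$ forces the slope to be $\succeq 0$. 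The plan is then to choose a lattice triangulation of $B$, build the path triangulation of the prism $B + [0,v]$ starting at a vertex $0 \in B$ (so that $P$ appears as one of the top-dimensional simplices), and use Corollary~\ref{cor:h-add} together with the inductive hypothesis applied to the simplicial pyramids over the lower-dimensional faces of $B$ to extract $\phi(B) \preceq \phi(P)$.

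The main obstacle is precisely this inductive step. Weak monotonicity immediately gives both $\phi(P) \preceq \phi(B + [0,v])$ (via $P \subseteq B + [0,v]$ at matching dimension $r+1$, assuming $0 \in B$) and $\phi(B) \preceq \phi(B + [0,v])$ (by the linear-polynomial argument above); yet these bound $\phi(B+[0,v])$ from below by $\phi(B)$ and $\phi(P)$ separately and do not by themselves relate $\phi(B)$ and $\phi(P)$. Closing the gap requires carefully controlling the contributions of the complementary half-open simplices in the path triangulation of the prism, and this is where the argument is most delicate.
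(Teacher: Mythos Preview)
Your chain reduction to the pyramid case is exactly what the paper does, and your base case $r=0$ is correct. But you explicitly leave the inductive step open, and the route you sketch through prism path-triangulations and half-open simplices is far heavier than what is needed; you yourself name the gap, and it is not clear the bookkeeping you propose can close it.

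The point is that your base-case computation already \emph{is} the general argument --- no induction on $\dim B$ is required. After translating the apex to $0$, so that $Q = \conv(\{0\} \cup B)$ with $B$ the base, set
\[
    T \ := \ \conv(B \cup 2B),
\]
the frustum between $B$ and $2B$ (the paper writes it as $T = 2Q \setminus (Q \setminus B)$). Then $T$ is a $\LL$-polytope, $T \cup Q = 2Q$ is convex, $T \cap Q = B$, and $T \subseteq 2Q$ with $\dim T = \dim(2Q)$. The valuation identity gives
\[
    \phi(2Q) - \phi(T) \ = \ \phi(Q) - \phi(B),
\]
and weak monotonicity applied to $T \subseteq 2Q$ makes the left side $\succeq 0$. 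Hence $\phi(B) \preceq \phi(Q)$ directly, in every dimension at once. For $r=0$ this is literally your computation: with $B=\{v\}$ and $Q=[0,v]$ the frustum is $T=[v,2v]$, and $\phi([0,2v])-\phi([v,2v]) = \phi([0,v])-\phi(\{v\})$ is (after translation-invariance) your identity $\phi([0,2v]) = 2\phi([0,v]) - \phi(\{0\})$. You simply did not notice that replacing the translated segment $[v,2v]$ by the higher-dimensional frustum $\conv(B \cup 2B)$ makes the same two-line argument work for arbitrary $B$, eliminating the need for any prism decomposition or inductive hypothesis.
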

\begin{proof}
    For $\LL$-polytopes  $P\subseteq Q$ we construct a chain of
    $\LL$-polytopes
    \[
        P=P_0\subseteq P_1\subseteq \cdots \subseteq P_r\subseteq Q,
    \]
    where $P_{i+1}=\conv(P_i\cup q_i)$ for some $q_i\in (Q\cap \LL)\setminus
    \aff(P_i)$ for all $0\leq i\leq r-1$, and $\dim(P_r)=\dim(Q)$. Hence, it
    suffices to prove that $\phi(P)\preceq \phi (Q)$ when $Q$ is a pyramid over
    $P$ with apex $a = 0$. If $P=\emptyset$, then $Q=\{0\}$ and
    $\phi(Q)\succeq
    0$ by definition. If $\dim(P) \geq 0$, then the truncated pyramid $T : =
    2Q\setminus (Q\setminus P)$ is contained in $2Q$ and is of equal
    dimension. Therefore
    \[
        0 \preceq \phi(2Q) -\phi(T) \ = \ \phi (Q)-\phi(P). \qedhere
    \]
\end{proof}

The next result gives us the relation to monotone valuations.
\begin{prop}\label{prop:monotone}
    Let $\phi$ be a weakly $h^*$-monotone $\LL$-valuation. Then $\phi$ is
    monotone.
\end{prop}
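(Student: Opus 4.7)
The plan is to reduce via Proposition~\ref{prop:weakmonotonicity}: since monotonicity is equivalent to weak monotonicity, it suffices to show that a weakly $h^*$-monotone $\LL$-valuation $\phi$ is weakly monotone. The condition $\phi(\{0\}) \succeq 0$ is already built into weak $h^*$-monotonicity, so only the inequality $\phi(P) \preceq \phi(Q)$ for $\LL$-polytopes $P \subseteq Q$ of equal dimension $r$ remains to be verified.

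The key observation is that evaluating the polynomial expansion from Theorem~\ref{thm:phi_poly} at $n = 1$ isolates only the top two $h^*$-coefficients: since $\binom{1+r-i}{r}$ equals $r+1$ for $i = 0$, equals $1$ for $i = 1$, and vanishes for all $i \geq 2$, one obtains
\[
    \phi(P) \ = \ \phi_P(1) \ = \ (r+1)\, h^\phi_0(P) + h^\phi_1(P),
\]
and similarly for $Q$. Since $\dim P = \dim Q$, weak $h^*$-monotonicity gives $h^\phi_i(P) \preceq h^\phi_i(Q)$ for $i = 0, 1$. Using the fact that the partial order on $G$ is compatible with the group structure (so multiplication by the positive integer $r+1$ preserves $\succeq$), this yields
\[
    \phi(Q) - \phi(P) \ = \ (r+1)\bigl(h^\phi_0(Q) - h^\phi_0(P)\bigr) + \bigl(h^\phi_1(Q) - h^\phi_1(P)\bigr) \ \succeq \ 0.
\]

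I do not expect any real obstacle: the argument reduces to a short binomial identity plus an invocation of Proposition~\ref{prop:weakmonotonicity}. The only subtlety is recognizing that $\binom{1+r-i}{r}$ truncates after its first two entries, which is precisely what makes weak $h^*$-monotonicity strong enough to deliver weak monotonicity. The degenerate case $r = 0$ is vacuous, since $P \subseteq Q$ with $\dim P = \dim Q = 0$ forces $P = Q$.
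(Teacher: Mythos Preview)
Your proof is correct and takes a genuinely different route from the paper. The paper argues via half-open decompositions: it chooses a dissection of $Q$ into $\LL$-simplices extending one of $P$, picks a general $q \in \relint P$, and writes $\phi(Q) - \phi(P)$ as a sum of $\phi(\half{q}{T_i})$ over the ``extra'' simplices; nonnegativity then comes from Theorem~\ref{thm:h-weaknonneg}(iii). Your argument bypasses all of this machinery: you evaluate the defining expansion at $n=1$ and observe that only $h^\phi_0$ and $h^\phi_1$ survive, so the inequality follows directly from the \emph{definition} of weak $h^*$-monotonicity. This is strictly more elementary---it uses neither dissections nor the characterization theorem---and in fact shows that the conclusion already follows from the componentwise inequality for just the first two $h^*$-coefficients. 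The paper's approach, on the other hand, fits into the broader narrative of half-open decompositions that drives the rest of the article, and makes explicit use of the equivalence established in Theorem~\ref{thm:h-weaknonneg}.
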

\begin{proof}
    We have to show that $\phi(P)\leq \phi(Q)$ for $\LL$-polytopes $P\subseteq
    Q$. By Proposition~\ref{prop:weakmonotonicity} we may assume that
    $\dim(P)=\dim(Q)$.  Let $Q = T_1 \cup T_2 \cup \cdots \cup T_N$ be a
    dissection of $Q$ into $\LL$-simplices such that $P = T_{M+1} \cup T_{M+2}
    \cup \cdots \cup T_N$ for some $M < N$. For a point $q\in \relint P$
    general with respect to all $T_i$ we obtain
    \[
        \phi(Q)-\phi (P) \ = \ \sum_{i=1}^M \phi(H_q T_i) \ \succeq \ 0
    \]
    by Theorem \ref{thm:h-weaknonneg}.
\end{proof}

The converse, however, is not true.

\begin{example}\label{ex:notweaklyhmonotone}
    Let $R$ be the lattice triangle with vertices $a = \binom{0}{0},b =
    \binom{2}{0},c = \binom{2}{1}$. Consider the valuation $\Mink{\dvol}{Q}$
    where $Q = [(0,0),(1,1)]$. It is easy to see that
    $\Mink{\dvol}{Q}$ is monotone. To see that $\Mink{\dvol}{Q}$ is not weakly
    $h^*$-monotone, we appeal to Theorem~\ref{thm:h-weaknonneg} and compute
    for the facet $F = \conv(b,c)$
    \[
        \Mink{\dvol}{Q}(\relint R) + \Mink{\dvol}{Q}(\relint F) \ = \ (-1) + 0
        \ < \ 0.
    \]
\end{example}

We close this section by summarizing the various relationships in the
following diagram:
\begin{center}
    \begin{tabular}{c}
        $h^{\ast}$-nonnegative\\ 
        $\Updownarrow$\\
        $h^{\ast}$-monotone
    \end{tabular}
    \ $\Longrightarrow$ \
    \begin{tabular}{c}
        weakly\\ $h^{\ast}$-monotone
    \end{tabular}
    \ $\Longrightarrow$ \
    \begin{tabular}{c}
        monotone\\
        $\Updownarrow$\\
        weakly monotone
    \end{tabular}
    \ $\Longrightarrow$ \
    nonnegative
\end{center}

\section{Cones of combinatorially positive valuations}  \label{sec:cones}

Let us assume that $G$ is a finite-dimensional $\R$-vector space. Then 
\[
    \Vals(\LL,G) \ = \ \{ \phi : \PolL \rightarrow G \text{
    $\LL$-valuation}\}
\]
inherits the structure of a real vector space.  Let $C \subset G$ be a closed
and pointed convex cone. Then we can define a partial order on $G$ by
\[
    x \ \preceq_C \ y \quad :\Longleftrightarrow \quad  y-x \in C.
\]
This partial order is compatible with the group structure on $G$ and $C = \{ x
\in G : x \succeq 0\}$. Throughout this section, $G$ will be partially ordered
by some $C$.
    
We write $\ValsCP(\LL,G)$ for the collection of combinatorially positive
$\LL$-valuation $\phi : \PolL \rightarrow G$. Observing that condition (iii)
in Theorem~\ref{thm:h-nonneg} is linear in $\phi$ shows that 
$\ValsCP(\LL,G)$ has typically a nice structure.

\begin{prop}
    The set $\ValsCP(\LL,G)$ is a convex cone.
\end{prop}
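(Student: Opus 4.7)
The plan is to reduce the statement to the cone structure of $C \subset G$ via the characterization in Theorem~\ref{thm:h-nonneg}. Since $\Vals(\LL,G)$ carries the natural pointwise vector space structure, it suffices to verify that $\ValsCP(\LL,G)$ is closed under pointwise addition and nonnegative scaling.

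First, I would invoke Theorem~\ref{thm:h-nonneg} in the form of condition (iii): a valuation $\phi \in \Vals(\LL,G)$ lies in $\ValsCP(\LL,G)$ if and only if $\phi(\relint(\Delta)) \succeq_C 0$ for every $\LL$-simplex $\Delta$. Because the partial order is defined by $x \succeq_C 0 \iff x \in C$, this membership condition is equivalent to $\phi(\relint(\Delta)) \in C$ for all $\LL$-simplices $\Delta$.

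Next, I would observe that from \eqref{eqn:phi_relint} the map $\phi \mapsto \phi(\relint(\Delta))$ is linear in $\phi$. Thus for $\phi, \psi \in \ValsCP(\LL,G)$ and $\lambda, \mu \ge 0$,
\[
    (\lambda \phi + \mu \psi)(\relint(\Delta)) \ = \ \lambda\, \phi(\relint(\Delta)) + \mu\, \psi(\relint(\Delta)).
\]
Since both summands lie in $C$ and $C$ is a convex cone (closed under addition and nonnegative scaling), the right-hand side belongs to $C$ for every $\LL$-simplex $\Delta$. Applying Theorem~\ref{thm:h-nonneg} again gives $\lambda \phi + \mu \psi \in \ValsCP(\LL,G)$.

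There is no real obstacle here; the proposition is essentially a formal consequence of the linearity of condition (iii) in $\phi$ combined with the fact that $C$ itself is a convex cone. The only thing worth checking is that $\ValsCP(\LL,G)$ is nonempty (so that ``cone'' is not vacuous), which follows from $0 \in \ValsCP(\LL,G)$.
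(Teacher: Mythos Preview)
Your proof is correct and follows exactly the approach the paper indicates: the paper simply observes that condition (iii) of Theorem~\ref{thm:h-nonneg} is linear in $\phi$, and your argument spells out precisely how this linearity combined with $C$ being a convex cone yields the result.
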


In the following sections we will study the geometry of this cone for $\LL =
\R^d$ and $\LL = \Z^d$.

\subsection{$\R^d$-valuations}

Our main result for $\LL = \R^d$ gives a precise description of
$\ValsCP(\R^d,G)$.

\begin{thm}\label{thm:conevol}
    Let $G$ be a finite-dimensional real vector space partially ordered by a
    closed and pointed convex cone $C$. Then
    \[
        \ValsCP(\R^d,G) \ \cong \ C.
    \]
    The isomorphism takes $c$ to $c \, \vol_d$.
\end{thm}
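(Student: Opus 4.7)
The plan is to prove both directions of the isomorphism. \textbf{For the easy direction}, observe that $\vol_d$ is a simple $\R^d$-valuation with $\vol_d(P)\ge 0$ for every $P\in\Pol{\R^d}$, so Corollary~\ref{cor:simple_NN} yields that $\vol_d$ is combinatorially positive as a real-valued valuation. For $c\in C$, the translation-invariant $G$-valued valuation $c\,\vol_d$ satisfies $(c\,\vol_d)(\relint\Delta)=\vol_d(\Delta)\cdot c\in C$ for every simplex $\Delta$, so it lies in $\ValsCP(\R^d,G)$; the map $c\mapsto c\,\vol_d$ is clearly linear and injective.

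\textbf{For surjectivity}, I would invoke McMullen's homogeneous decomposition~\cite{mcmullenEuler} of translation-invariant $\R^d$-valuations to write $\phi=\phi_0+\phi_1+\cdots+\phi_d$, where each $\phi_j$ is homogeneous of degree $j$ and satisfies $\phi_j(P)=0$ whenever $\dim P<j$. For every simplex $\Delta$ and every rational $\lambda>0$ one has the polynomial identity $\phi(\relint(\lambda\Delta))=\sum_j\lambda^j\phi_j(\relint\Delta)$, and combinatorial positivity places the left-hand side in $C$; by closedness of $C$ the polynomial $\sum_j\lambda^j\phi_j(\relint\Delta)$ lies in $C$ for all $\lambda\ge 0$. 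The heart of the argument is an induction on $k<d$ showing $\phi_k\equiv 0$. Assuming $\phi_0=\cdots=\phi_{k-1}=0$, dividing the polynomial by $\lambda^k$ and letting $\lambda\to 0^+$ forces $\phi_k(\relint P)\in C$ for every polytope $P$, hence $\phi_k(P)=\sum_{G\le P}\phi_k(\relint G)\in C$ as a sum of elements of the cone. For a $(k+1)$-simplex $\Delta\subset\R^d$, matching coefficients in the homogeneous decomposition of Theorem~\ref{thm:reciprocity} yields the identity
\[
    \phi_j(\relint(-P)) \ = \ (-1)^{\dim P+j}\phi_j(P),
\]
which for $j=k$ and $\dim\Delta=k+1$ reads $\phi_k(\relint(-\Delta))=-\phi_k(\Delta)$. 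Since the left-hand side lies in $C$, we obtain $-\phi_k(\Delta)\in C$, and combined with $\phi_k(\Delta)\in C$ the pointedness of $C$ forces $\phi_k(\Delta)=0$; applied to the decomposition $\phi_k(\Delta)=\phi_k(\relint\Delta)+\sum_{F\text{ facet}}\phi_k(F)$ as a vanishing sum of elements of $C$, pointedness further kills each summand, and since every $k$-simplex occurs as a facet of some $(k+1)$-simplex in $\R^d$, $\phi_k$ vanishes on all $k$-simplices and hence on all $k$-polytopes by triangulation. To extend vanishing to $r$-polytopes with $r>k$, I would use a parallelepiped argument: the dilation $nP$ of an $r$-parallelepiped $P=\sum_{i=1}^r[0,v_i]$ dissects into $n^r$ lattice-translates of $P$ whose pairwise intersections are $(r-1)$-dimensional (and therefore annihilated by $\phi_k$ via a sub-induction on $r$), giving $\phi_k(nP)=n^r\phi_k(P)=n^k\phi_k(P)$ and forcing $\phi_k(P)=0$; every $r$-simplex $\conv(0,u_1,\ldots,u_r)$ arises as one of the $r!$ pieces in the dissection associated to $v_i:=u_i-u_{i-1}$, and a further application of pointedness extends $\phi_k=0$ to every $r$-simplex and every $r$-polytope.

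Having $\phi_k\equiv 0$ for $k<d$, we are left with $\phi=\phi_d$, a simple and $d$-homogeneous translation-invariant $\R^d$-valuation, which by the classical uniqueness theorem for such valuations equals $c\,\vol_d$ for some $c\in G$; combinatorial positivity evaluated on any $d$-simplex $\Delta$ gives $\phi_d(\relint\Delta)=c\cdot\vol_d(\Delta)\in C$ with $\vol_d(\Delta)>0$, whence $c\in C$. \textbf{The main obstacle} is the inductive step killing $\phi_k$: the $\lambda\to 0^+$ scaling limit only yields $\phi_k(\relint P)\in C$, so the proof must pair it with the reciprocity-derived identity on a $(k+1)$-simplex (to produce $-\phi_k(\Delta)\in C$) before pointedness of $C$ collapses $\phi_k$, and then propagate this vanishing across all dimensions through the parallelepiped dissection.
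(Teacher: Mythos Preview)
Your proof is correct and takes a genuinely different route from the paper's argument.

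The paper argues as follows: combinatorial positivity implies monotonicity (Proposition~\ref{prop:monotone}), and McMullen's theorem (Theorem~\ref{thm:monotonecont}) then gives that each $\ell\circ\phi$ is \emph{continuous} in the Hausdorff metric. Simpleness of $\phi$ is deduced by approximating a lower-dimensional $Q$ by full-dimensional $Q_n=Q+\tfrac{1}{n}[0,1]^d$; continuity forces the numerator coefficients $g_i(Q_n)\to g_i(Q)$ to lie in the closed cone $C$, while $(1-t)\mid g_Q(t)$ forces their sum to vanish, so pointedness kills each $g_i(Q)$. Finally, Lemma~\ref{lem:volsimple} (simple + monotone $\Rightarrow$ multiple of volume) finishes.

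Your argument bypasses continuity entirely. You work with the homogeneous decomposition $\phi=\sum_j\phi_j$, extract $\phi_k(\relint\Delta)\in C$ by rational scaling and the closedness of $C$, and then pair this with the reciprocity identity $\phi_k(\relint(-\Delta))=-\phi_k(\Delta)$ on a $(k{+}1)$-simplex to force $\phi_k(\Delta)=0$ by pointedness. The parallelepiped dissection then propagates vanishing to all higher dimensions, and you conclude via the classical fact that a translation-invariant valuation homogeneous of degree $d$ is a multiple of $\vol_d$ (this is indeed in McMullen~\cite{mcmullenEuler}, so your citation is legitimate, though you might make the reference explicit). What you gain is a self-contained, purely algebraic argument that does not invoke Theorem~\ref{thm:monotonecont} or Lemma~\ref{lem:volsimple}; what the paper gains is brevity, since the $g$-vector limiting argument handles all $\phi_k$ for $k<d$ in one stroke rather than via a nested induction with the parallelepiped step.
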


If $\dim G = 1$ and hence up to isomorphism $G = \R$ with the  usual order, we
obtain a new characterization of the volume.
\begin{cor}\label{cor:charvol}
    The volume is, up to scaling, the unique real-valued
    combinatorially positive \mbox{$\R^d$-valuation}.
\end{cor}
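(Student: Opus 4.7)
The plan is to deduce Corollary~\ref{cor:charvol} as an immediate specialization of Theorem~\ref{thm:conevol}. Taking $\dim G = 1$, I would identify $G$ with $\R$; a closed pointed convex cone $C \subset \R$ that induces the usual order on $\R$ must be $C = \R_{\ge 0}$ (a pointed cone in $\R$ is either $\R_{\ge 0}$ or $\R_{\le 0}$, and only the former corresponds to the standard order). Theorem~\ref{thm:conevol} then yields the cone isomorphism
\[
    \ValsCP(\R^d, \R) \ \cong \ \R_{\ge 0}, \qquad c \ \mapsto \ c\,\vol_d,
\]
from which the claim is immediate: every combinatorially positive real-valued $\R^d$-valuation is of the form $c\,\vol_d$ with $c \ge 0$, and conversely every such multiple is combinatorially positive since $\vol_d$ is.

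There is essentially no obstacle at the level of the corollary; all of the content lives in Theorem~\ref{thm:conevol}, which must be proved first. The only minor point worth stating carefully is the reduction to the cone $C = \R_{\ge 0}$, i.e.~the observation that ``real-valued combinatorially positive'' is just the instance of the definition in Section~\ref{ssec:h-non} obtained by ordering $\R$ with its standard cone. Once Theorem~\ref{thm:conevol} is established, the corollary reduces to this single line of interpretation, so I would phrase the proof as a one-sentence specialization rather than revisiting any of the geometric or valuation-theoretic machinery.
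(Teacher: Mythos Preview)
Your proposal is correct and matches the paper's approach exactly: the corollary is stated immediately after Theorem~\ref{thm:conevol} with the one-line remark that taking $\dim G = 1$ (so $G = \R$ with the usual order, i.e.\ $C = \R_{\ge 0}$) yields the result. No additional argument is given or needed.
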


As a first step towards a proof of Theorem~\ref{thm:conevol}, we recall the
following result of McMullen.
\begin{thm}[{\cite[Theorem 8]{mcmullenEuler}}]\label{thm:monotonecont}
    Every monotone $\R^d$-valuation $\phi : \Pol{\R^d} \rightarrow \R$
    is continuous with respect to the Hausdorff metric.
\end{thm}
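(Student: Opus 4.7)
The plan is to sandwich $\phi(P_n)$ between $\phi$-values of explicit polytopes whose behavior is controlled by the polynomial expansion of Theorem~\ref{thm:phi_poly}, extended to real dilations and Minkowski sums. First I would upgrade polynomiality from integer to real parameters: because $\R^d$-polytopes admit arbitrary positive scalings, applying Theorem~\ref{thm:phi_poly} both to $P$ and to $P/k$ for every integer $k \geq 1$ produces polynomial values for $\phi(tP)$ on $\tfrac{1}{k}\Z_{\geq 0}$; matching these on overlaps yields agreement with a single polynomial on $\Q_{\geq 0}$. Monotonicity of the map $t \mapsto \phi(tP)$, which follows from monotonicity of $\phi$ once $0 \in P$, combined with density of $\Q_{\geq 0}$, forces agreement with that polynomial on all real $t \geq 0$, so $t \mapsto \phi(tP)$ is continuous. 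The analogous argument applied to Theorem~\ref{Thm:multipolynomial} shows that $(s,t) \mapsto \phi(sP + tK)$ is continuous in $s, t \geq 0$ for any fixed $\R^d$-polytopes $P, K$.

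With this in hand, the full-dimensional case is straightforward. Translating so that $0$ lies in the interior of $P$, Hausdorff convergence $P_n \to P$ produces a sequence $\delta_n \to 0$ with $(1 - \delta_n) P \subseteq P_n \subseteq (1 + \delta_n) P$, and monotonicity of $\phi$ gives
\[
\phi((1 - \delta_n) P) \ \leq \ \phi(P_n) \ \leq \ \phi((1 + \delta_n) P).
\]
Both bounds tend to $\phi(P)$ by the first step, proving $\phi(P_n) \to \phi(P)$.

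The main obstacle is the low-dimensional case $\dim P = k < d$. The upper bound proceeds as above: fix a full-dimensional $K$ with $0$ in its interior; for $n$ large, $P_n \subseteq P + \epsilon K$, so monotonicity and the continuity of $\epsilon \mapsto \phi(P + \epsilon K)$ established in the first step yield $\limsup_n \phi(P_n) \leq \phi(P)$. The matching lower bound is delicate because $P$ has empty interior in $\R^d$ and so cannot be shrunk to sit inside $P_n$. I would resolve this by induction on $d$: the restriction $\phi|_{\Pol{\aff(P)}}$ is a monotone valuation on the $k$-dimensional affine space $\aff(P)$, continuous by the inductive hypothesis, so $\phi(\pi(P_n)) \to \phi(P)$ where $\pi$ denotes orthogonal projection onto $\aff(P)$. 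To relate $\phi(P_n)$ to $\phi(\pi(P_n))$, I would apply Minkowski-thickening in the normal direction and use that $\phi(Q + \epsilon K) - \phi(Q)$ is, by Theorem~\ref{Thm:multipolynomial}, a polynomial in $\epsilon$ vanishing at $0$ whose coefficients are uniformly bounded over any Hausdorff-bounded family of $Q$ (monotonicity implies $\phi$ is bounded on bounded collections, and the coefficients can be recovered from such bounded values by polynomial interpolation). Combining the two-sided comparison with the inductive continuity on $\aff(P)$ then pins $\liminf_n \phi(P_n) \geq \phi(P)$ and closes the argument.
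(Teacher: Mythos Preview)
The paper does not prove Theorem~\ref{thm:monotonecont}; it is quoted from McMullen~\cite{mcmullenEuler} and used as a black box, so there is no in-paper argument to compare against. On its own merits your outline is sound. The extension of Theorems~\ref{thm:phi_poly} and~\ref{Thm:multipolynomial} from integer to real dilation parameters via rational density plus monotonicity is correct, and the full-dimensional sandwich is standard. In the low-dimensional case your final paragraph is compressed but the ingredients are right: the two containments $P_n \subseteq \pi(P_n) + \delta_n K$ and $\pi(P_n) \subseteq P_n + \delta_n K$ (both consequences of $\mathrm{dist}(p,\aff P)\le\delta_n$ for every $p\in P_n$), combined with the uniform estimate $|\phi(Q+\epsilon K)-\phi(Q)|\le C\epsilon$ over Hausdorff-bounded families of $Q$ (which does follow from your interpolation argument, since $\epsilon\mapsto\phi(Q+\epsilon K)$ is a polynomial of degree at most $d$ whose values at $\epsilon=0,1,\dots,d$ are uniformly bounded by monotonicity), sandwich $\phi(P_n)$ against $\phi(\pi(P_n))$; the inductive hypothesis on $\aff(P)\cong\R^k$ then gives $\phi(\pi(P_n))\to\phi(P)$ and closes the argument. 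In a full write-up you should make the second containment and the interpolation bound explicit rather than leaving them inside the phrase ``two-sided comparison.''
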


Since every combinatorially positive valuation is monotone
(Proposition~\ref{prop:monotone})
we conclude that the cone $\ValsCP(\R^d,G)$ is indeed a closed convex cone.
We recall the following well-known result; see, for example,
Gruber~\cite[Chapter 16]{gruber2007convex}).

\begin{lem}\label{lem:volsimple}
    If $\phi : \Pol{\R^d} \rightarrow \R$ is a simple, monotone
    $\R^d$-valuation, then $\phi = \lambda \vol_d$ for some $\lambda \ge 0$.
\end{lem}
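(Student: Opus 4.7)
The plan is to set $\lambda := \phi([0,1]^d)$, observe $\lambda\ge 0$ (from monotonicity applied to $\emptyset\subseteq [0,1]^d$), and then prove $\phi=\lambda\vol_d$ in two main steps. A crucial preliminary: by Theorem~\ref{thm:monotonecont}, monotonicity upgrades $\phi$ to a function continuous in the Hausdorff metric.

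First I would show that $\phi$ and $\lambda\vol_d$ agree on axis-aligned boxes. Dissecting $[0,1]^d$ into $k^d$ translates of $[0,1/k]^d$, the valuation property yields $k^d$ copies of $\phi([0,1/k]^d)$ by translation-invariance, while the shared-facet corrections vanish by simplicity. Hence $\phi([0,1/k]^d)=\lambda/k^d$, and iterating the dissection extends $\phi(B)=\lambda\vol_d(B)$ first to axis-aligned boxes with rational edge lengths, then to all axis-aligned boxes by continuity.

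The main obstacle is extending the equality from boxes to arbitrary polytopes. For $P\in\Pol{\R^d}$, take a bounding axis-aligned box $B\supseteq P$ and subdivide $B$ by a grid of $N^d$ small cubes of side $1/N$. Let $U_N$ be the union of grid cubes fully contained in $P$; the pieces $C\cap P$ for boundary cubes $C$ are convex polytopes, and together with the interior cubes they give a dissection of $P$. Simplicity kills the shared-facet contributions, so
\[
    \phi(P) \;=\; \lambda\vol_d(U_N) \;+\; \sum_{C\text{ boundary}} \phi(C\cap P).
\]
Monotonicity and agreement on cubes yield $\phi(C\cap P)\le\phi(C)=\lambda/N^d$, and a convex polytope meets only $O(N^{d-1})$ boundary cubes, so the boundary sum is $O(\lambda/N)$. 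Combined with $\phi\ge 0$, this sandwiches
\[
    \lambda\vol_d(U_N) \;\le\; \phi(P) \;\le\; \lambda\vol_d(U_N)+O(\lambda/N);
\]
sending $N\to\infty$ and using $\vol_d(U_N)\to\vol_d(P)$ (Jordan measurability of convex polytopes) gives $\phi(P)=\lambda\vol_d(P)$, as desired.
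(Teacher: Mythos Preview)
The paper does not actually prove this lemma: it is stated as a well-known fact with a reference to Gruber's book, and no argument is given. So there is nothing to compare your approach against on the paper's side.

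Your argument is correct and is essentially the classical one. A couple of minor remarks. First, your invocation of Theorem~\ref{thm:monotonecont} is not strictly necessary: in the final approximation step you only use cubes of side $1/N$, so agreement on rational boxes already suffices, and the continuity step for irrational boxes can be dropped. Second, the phrase ``subdivide $B$ by a grid of $N^d$ small cubes of side $1/N$'' implicitly assumes $B$ is a unit cube; in general you should take a grid of mesh $1/N$ covering $B$ (or enlarge $B$ to have integer side lengths), but this is a cosmetic fix. The key inputs---inclusion--exclusion~\eqref{eqn:IE} combined with simplicity to make dissections additive, translation-invariance to identify values on congruent cubes, monotonicity to bound the boundary contributions, and the $O(N^{d-1})$ count of boundary cubes---are all correctly deployed.
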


\begin{proof}[Proof of Theorem \ref{thm:conevol}]
    Let $\phi$ be a combinatorially positive valuation. We will show that for
    every linear form $\ell : G \rightarrow \R$ that is nonnegative on $C$, the
    real-valued $\R^d$-valuation $\ell \circ \phi$ is a nonnegative multiple
    of the volume.  Since $C$ is pointed, this then proves $\phi = c
    \vol_d$ for $c = \phi([0,1]^d) \in C$.

    Since $\ell \ge 0$ on $C$, $\ell \circ \phi$ is monotone and by
    Theorem~\ref{thm:monotonecont} continuous in the Hausdorff metric.
    In light of Lemma~\ref{lem:volsimple} it thus suffices to prove that
    $\phi$ is simple.
    
    For every polytope $P \in \Pol{\R^d}$ let $g(P)=(g_0 (P),g_1 (P),\dots,
    g_d (P)) \in G^{d+1}$ be such that
    \[
        \sum_{n \ge 0} \phi(nP) t^n \ = \ 
        \frac{g_0(P)+g_1(P) t +\cdots + g_d(P) t^d}{(1-t)^{d+1}}.
    \]
    We denote the numerator polynomial by $g_P(t)$. For all $0 \le i \le d$,
    every $g_i$ is a continuous $\R^d$-valuation. If $\dim P =
    r$, then
    \[
        g_P(t) \ = \ (1-t)^{d - r} \sum_{i=0}^r h_i^\phi(P) t^i.
    \]
    In particular, if $\dim P = d$, then $g_i(P) = h^\phi_i(P) \in C$ for all
    $0 \le i \le d$.

    Now let $Q$ be of dimension $r < d$. Consider the sequence of
    polytopes \mbox{$Q_n = Q + \frac{1}{n} [0,1]^d$}. Then $\dim Q_n = d$ for all $n
    \ge 1$ and $h^\phi_i(Q_n) = g_i(Q_n) \rightarrow g_i(Q)$ for $n
    \rightarrow \infty$. Since $C$ is closed, we have $g_i(Q) \in C$ for all
    $i$. On the other hand, $(1-t) | g_Q(t)$ and therefore $\sum_{i=0}^d g_i
    (Q)=0$.  Since $C$ is pointed, we conclude that $g_i(Q)=0$ for all $i$ and
thus $\phi(Q) =
    0$.
\end{proof}

Using similar techniques, we can describe the cone
\[
    \ValsWCP(\R^d,G) \ := \ \{ \phi : \Pol{\R^d} \rightarrow G \text{ weakly
    $h^*$-monotone} \}.
\]

\begin{thm}\label{thm:wcp_real}
    Let $G$ be a finite-dimensional real vector space partially ordered by a
    closed and pointed convex cone $C$. Then
    \[
        \ValsWCP(\R^d,G) \ \cong \ C \times C.
    \]
    The isomorphism takes $(c_1,c_2)$ to $c_1 \chi + c_2 \vol_d$.
\end{thm}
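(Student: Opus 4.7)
The plan is to parallel the proof of Theorem~\ref{thm:conevol}, reducing weak $h^*$-monotonicity to combinatorial positivity by subtracting off an Euler-characteristic part. The easy direction is that $c_1\chi + c_2 \vol_d \in \ValsWCP(\R^d,G)$ for every $(c_1,c_2) \in C \times C$: the coefficients $h^\chi_i(P) = (-1)^i\binom{\dim P}{i}$ depend only on $\dim P$, while $h^{\vol_d}_i(P)$ is a nonnegative multiple of $\vol_d(P)$ on $d$-dimensional polytopes and vanishes in lower dimensions, so the equidimensional comparison in condition (i) of Theorem~\ref{thm:h-weaknonneg} is trivially satisfied. The map $(c_1,c_2) \mapsto c_1\chi + c_2\vol_d$ is injective because $c_1$ is recovered by evaluation at $\{0\}$ and $c_2$ by evaluation at $[0,1]^d$.

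For the converse, let $\phi \in \ValsWCP(\R^d,G)$, set $c_1 := \phi(\{0\}) \in C$, and define $\tilde\psi := \phi - c_1 \chi$. Since $h^\chi_i(P)$ depends only on $\dim P$, subtracting $c_1\chi$ preserves weak $h^*$-monotonicity and forces $\tilde\psi(\{0\}) = 0$. By Proposition~\ref{prop:monotone}, $\tilde\psi$ is monotone; composing with any nonnegative linear functional $\ell \in C^*$ yields a real-valued monotone valuation that is Hausdorff-continuous by Theorem~\ref{thm:monotonecont}. Since $C$ is closed and pointed in finite-dimensional $G$, the dual cone $C^*$ spans $G^*$, and therefore $\tilde\psi$ is itself continuous in the Hausdorff metric.

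The key step is to show that $\tilde\psi$ is combinatorially positive; granting this, Theorem~\ref{thm:conevol} gives $\tilde\psi = c_2 \vol_d$ for a unique $c_2 \in C$, and the decomposition $\phi = c_1\chi + c_2\vol_d$ follows. Fix $P \in \Pol{\R^d}$ of dimension $r$, pick $v \in \relint P$, and choose an $r$-dimensional polytope $K$ with $0 \in \relint K$ and $K \subset \aff(P-v)$. For $\epsilon > 0$ sufficiently small, the shrunk copy $P_\epsilon := v + \epsilon K$ is an $r$-dimensional polytope contained in $P$, and each dilation $n P_\epsilon$ converges to the singleton $\{nv\}$ in the Hausdorff metric as $\epsilon \to 0$. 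Continuity and translation-invariance give $\tilde\psi(n P_\epsilon) \to \tilde\psi(\{0\}) = 0$ for $n = 0,1,\dots,r$, and since the $h^*$-vector of an $r$-dimensional polytope is recovered from these $r+1$ polynomial values via an invertible (Vandermonde-type) linear transformation, $h^{\tilde\psi}_i(P_\epsilon) \to 0$ in $G$ for every $i$. Weak $h^*$-monotonicity applied to $P_\epsilon \subseteq P$ gives $h^{\tilde\psi}_i(P_\epsilon) \preceq h^{\tilde\psi}_i(P)$, and sending $\epsilon \to 0$ with $C$ closed yields $0 \preceq h^{\tilde\psi}_i(P)$ for every $P$ and every $i$. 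The main obstacle is precisely this reduction, since weak $h^*$-monotonicity by itself gives no absolute lower bound on $h^*$-vectors; the shrinking argument succeeds only because $\tilde\psi(\{0\}) = 0$ after removing the constant part $c_1 \chi$.
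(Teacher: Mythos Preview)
Your proof is correct, and the overall architecture---subtract $c_1\chi$, pass to a monotone hence continuous $\tilde\psi$, then identify $\tilde\psi$ with a multiple of volume---matches the paper. The substantive difference lies in the final identification.

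The paper never shows that $\tilde\psi$ is combinatorially positive. Instead it shows that $\tilde\psi$ is \emph{simple}: for $P\subseteq Q$ with $\dim P=\dim Q=r<d$, it \emph{fattens} both to $d$-dimensional $P_n=P+\tfrac1n[0,1]^d$ and $Q_n=Q+\tfrac1n[0,1]^d$, uses weak $h^*$-monotonicity in the top dimension (where the $g$-vector coincides with the $h^*$-vector) and passes to the limit to get $g_i(Q)-g_i(P)\in C$ for all $i$; since $(1-t)\mid g_Q(t)-g_P(t)$ and $C$ is pointed, this forces $g_P=g_Q$ and in particular $\tilde\psi(P)=\tilde\psi(Q)$. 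A dilation argument then gives $\tilde\psi(P)=\tilde\psi(\{0\})=0$, and Lemma~\ref{lem:volsimple} finishes.

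You take the opposite geometric limit: you \emph{shrink} $P_\epsilon\to\{v\}$ inside $\aff P$, use continuity and $\tilde\psi(\{0\})=0$ to drive the entire $h^*$-vector of $P_\epsilon$ to zero, and then let weak $h^*$-monotonicity and closedness of $C$ push the inequality through the limit. This yields the stronger conclusion that $\tilde\psi$ is combinatorially positive, after which Theorem~\ref{thm:conevol} applies. Your route avoids the $g$-vector and the divisibility trick entirely, at the cost of invoking Theorem~\ref{thm:conevol} (which internally reproves simplicity) rather than Lemma~\ref{lem:volsimple} directly; the paper's route isolates simplicity as the minimal property needed and reuses the $g$-vector setup from the proof of Theorem~\ref{thm:conevol}.
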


\begin{proof}
    Proposition~\ref{prop:monotone} shows that weakly $h^*$-monotone implies
    monotone. It follows that for
    $c_1 := \phi(\{0\}) \in C$, 
    \[
        \psi : =\phi  - c_1 \chi
    \]
    is still a weakly $h^*$-monotone $\R^d$-valuation and, in particular,
    monotone.  Analogous to the proof of Theorem~\ref{thm:conevol}, we show
    that $\psi$ is simple and conclude that $\psi = c_2 \vol_d$ for some $c_2
    \in C$.

    Let $P \subseteq Q$ be two polytopes of dimension $r < d$.
    Consider the $d$-polytopes $P_n := P + \frac{1}{n}[0,1]^d$ and $Q_n := Q +
    \frac{1}{n} [0,1]^d$.  Then $\dim (P_n)=\dim (Q_n)=d$ and $P_n\subseteq
    Q_n$ for all $n\geq 1$. Following the proof of Theorem~\ref{thm:conevol}, 
    we infer that $g_{Q_n}(t) - g_{P_n}(t)$ has all coefficients in $C$ and that
    \[
        g_{Q_n}(t) - g_{P_n}(t) \xrightarrow{n \rightarrow \infty} g_{Q}(t) -
        g_{P}(t).
    \]
    However, since $\dim(P) = \dim(Q) < d$, $g_P(1) - g_Q(1) = 0$. Since $C$
    is pointed, this implies that $g_P(t) = g_Q(t)$ and $\psi(P) = \psi(Q)$.

    Let us assume that $0 \in P$. Then $P \subseteq nP$ for all $n \ge 1$ and
    hence $\psi(nP) = c$ for all $n \ge 1$. In particular $\psi(0P) =
    \psi(\{0\}) = c$ which implies that $\psi(P) = 0$.
\end{proof}

\begin{cor}\label{cor:Steiner}
    The Steiner valuation $S(P) =\vol _d (P+B_d)$ is not weakly
    $h^\ast$-monotone for $d>1$.
\end{cor}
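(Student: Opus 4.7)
The strategy is to invoke Theorem~\ref{thm:wcp_real} with $G = \R$ ordered by the cone $C = \R_{\ge 0}$: it tells us that every weakly $h^\ast$-monotone real-valued $\R^d$-valuation must be of the form $c_1 \chi + c_2 \vol_d$ for some $c_1, c_2 \ge 0$. So I would argue by contradiction: assume $S = c_1 \chi + c_2 \vol_d$ and then produce a polytope on which the identity fails.

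First I would pin down $c_1$ by evaluating both sides at a point. Since $S(\{0\}) = \vol_d(B_d)$, $\chi(\{0\}) = 1$ and $\vol_d(\{0\}) = 0$, this forces $c_1 = \vol_d(B_d)$. The second step is to probe $S$ on a polytope of dimension strictly less than $d$, so that the $c_2 \vol_d$ term automatically drops out and only $c_1$ remains to be matched. A convenient test object is the unit segment $P = [0,e_1] \subset \R^d$: the assumed formula predicts $S(P) = c_1 = \vol_d(B_d)$, while a direct computation with the capsule $P + B_d$ (a cylinder of length $1$ with cross-section $B_{d-1}$ glued to two half-balls of radius $1$) yields
\[
    S(P) \ = \ \vol_d(P + B_d) \ = \ \vol_d(B_d) + \vol_{d-1}(B_{d-1}).
\]
Comparing the two expressions gives $\vol_{d-1}(B_{d-1}) = 0$, which is false whenever $d > 1$, the desired contradiction.

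I do not foresee any real obstacle: the entire argument is a one-shot test of Theorem~\ref{thm:wcp_real} against a classical Steiner-formula computation. The conceptual content is that $S$ picks up $(d-1)$-dimensional perimeter from a lower-dimensional body, whereas any linear combination of $\chi$ and $\vol_d$ is blind to such intermediate geometric information.
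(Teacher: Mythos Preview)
Your argument is correct. Both your proof and the paper's start from Theorem~\ref{thm:wcp_real}, reducing the question to whether $S$ lies in the linear span of $\chi$ and $\vol_d$. The paper settles this by appealing to the Steiner expansion $S = \sum_i \binom{d}{i} W_{d-i}$ together with the known linear independence of the quermassintegrals: since $W_0 = \vol_d$ and $W_d$ is proportional to $\chi$, the presence of the intermediate terms $W_1,\dots,W_{d-1}$ for $d>1$ rules out any such representation. You instead perform an explicit test on the segment $[0,e_1]$, computing the capsule volume directly; this is essentially the same computation that appears in Example~\ref{ex:Steiner}. Your route is more self-contained (it does not presuppose the linear independence of the $W_i$), while the paper's route is slightly more conceptual and explains structurally \emph{why} the obstruction appears, namely that $S$ genuinely involves all the intermediate homogeneous components.
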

\begin{proof}
    The quermassintegrals are linearly independent $\R^d$-valuations with
    $W_0$ being the volume and $W_d$ proportional to the Euler characteristic.
    Hence the representation~\eqref{eqn:Steinerpoly} shows that for $d>1$, $S$ is
not in
    the cone spanned by $\chi$ and $\vol_d$.
\end{proof}

It is known (cf.~\cite{gruber2007convex}) that the quermassintegrals are
nonnegative and monotone with respect to inclusion. Hence, using Hadwiger's
characterization result, the cone of nonnegative and the cone of monotone
rigid-motion invariant continuous valuations on convex bodies in $\R^d$ coincide
and are
isomorphic to $\R^{d+1}_{\ge 0}$. Meanwhile, the corresponding cones of
rigid-motion invariant (weakly) $h^*$-monotone valuations are still given by
Theorems~\ref{thm:conevol} and~\ref{thm:wcp_real}.

\subsection{Lattice-invariant valuations} 

Let $\LL$ be a lattice of rank $d$ that, without loss of generality, we can assume to be
$\Z^d$. A valuation $\phi : \Pol{\Z^d} \rightarrow G$ is
\Defn{lattice-invariant} if $\phi(T(P)) = \phi(P)$ for all $P \in \Pol{\Z^d}$
and every affine map $T$ with $T(\Z^d) = \Z^d$. A fundamental result on the
structure of lattice-invariant valuations was obtained by Betke and
Kneser~\cite{BetkeKneser}.  For $0 \le i \le d$, we define the \Defn{$\mathbf
i$-th standard simplex} as $\Delta_i := \conv\{0,e_1,\dots,e_i\}$, where
$\{e_1,\dots,e_d\}$ is a fixed basis for $\LL$.

\begin{thm}[{Betke--Kneser~\cite{BetkeKneser}}]\label{thm:betke_kneser}
    For every $a_0,a_1,\ldots,a_d \in G$ there is a unique lattice-invariant
    valuation $\phi : \Pol{\Z^d} \rightarrow G$ such that
	\[
        \phi(\Delta_i) = a_i \quad \text{ for all } 0 \le i \le d.
    \]
\end{thm}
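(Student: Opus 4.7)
The theorem asserts that the map sending a lattice-invariant valuation $\phi\colon\Pol{\Z^d}\to G$ to the tuple $(\phi(\Delta_0),\ldots,\phi(\Delta_d))\in G^{d+1}$ is a bijection. The plan is to treat injectivity (uniqueness) and surjectivity (existence) separately; the former is the substantive part.

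For \emph{uniqueness}, I would assume $\phi$ is lattice-invariant with $\phi(\Delta_i)=0$ for all $0\le i\le d$ and prove $\phi\equiv 0$ by induction on $d$. The base case $d=0$ is immediate. For $d\ge 1$, I would first note that every $k$-dimensional lattice polytope with $k<d$ can be moved by a lattice-affine map into the coordinate subspace $\Z^k\subset\Z^d$, so the inductive hypothesis applied to the restricted valuation gives $\phi\equiv 0$ on all polytopes of dimension less than $d$. For a $d$-dimensional $P$, I would dissect into lattice $d$-simplices $P_1,\ldots,P_N$ and apply Corollary~\ref{cor:ho-val} to write $\phi(P)=\sum_i\phi(\HO{P_i})$. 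Expanding each $\phi(\HO{S})$ via its defining inclusion--exclusion over the faces of $S$ (all lower-dimensional, hence already vanishing) reduces the problem to showing $\phi(S)=0$ for every lattice $d$-simplex $S$.

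The \emph{simplex case} contains the main obstacle. If $S$ is unimodular then it is lattice-equivalent to $\Delta_d$, whence $\phi(S)=0$. For non-unimodular $S$ the naive strategy---stellar-subdivide at a non-vertex lattice point and induct on normalized volume---fails, since in dimensions $d\ge 3$ there exist non-unimodular lattice simplices containing no non-vertex lattice points at all (Reeve tetrahedra). My workaround will be dilation: for every $n\ge 2$ the dilate $nS$ contains non-vertex lattice points on each edge (for instance $(n-1)v_0+v_1$), so a stellar subdivision of $nS$ at such a point yields lattice $d$-simplices whose normalized volumes are strict fractions of $\mathrm{nvol}(nS)$, together with lower-dimensional pieces. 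A strong induction on normalized volume of $d$-simplices, nested inside the outer induction on $d$, then shows $\phi(nS)=0$ for all sufficiently large $n$. Since $\phi_S(n)=\phi(nS)$ is a polynomial of degree at most $d$ by Theorem~\ref{thm:phi_poly} that vanishes at infinitely many integers, it is identically zero, giving $\phi(S)=\phi_S(1)=0$ and completing the uniqueness argument.

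For \emph{existence}, given $(a_0,\ldots,a_d)\in G^{d+1}$, I would construct a realizing $\phi$. When $G$ is a $\Q$-vector space, the Ehrhart coefficients $L_0,\ldots,L_d$ defined by $\dvol_P(n)=\sum_i L_i(P)\,n^i$ are lattice-invariant valuations $\Pol{\Z^d}\to\Q$, and the matrix $(L_j(\Delta_i))_{i,j}$ is lower-triangular with diagonal entries $1/j!\neq 0$, hence invertible; the unique solution produces coefficients $c_i\in G$ with $\phi:=\sum_i c_iL_i$ satisfying $\phi(\Delta_i)=a_i$. For a general abelian group $G$, I would parallel the uniqueness argument inductively to build $\phi$ directly: on lower-dimensional polytopes via the inductive hypothesis, and on $d$-simplices via the same dilation-and-polynomial scheme, checking lattice-invariance and the valuation property along the way. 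The central difficulty in all of this is the simplex step above, which is resolved by the dilation plus polynomial-extrapolation device.
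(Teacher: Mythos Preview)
The paper does not prove Theorem~\ref{thm:betke_kneser}; it is quoted from Betke--Kneser~\cite{BetkeKneser} and used as a black box, so there is no proof in the paper to compare your attempt against.

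That said, your uniqueness argument has a real gap at the simplex step. You propose to treat a (possibly empty) lattice $d$-simplex $S$ with $\mathrm{nvol}(S)=N>1$ by stellar-subdividing $nS$ at the edge point $p=(n-1)v_0+v_1$ and then invoking the strong induction on normalized volume for the resulting pieces. But $p$ divides the edge $[nv_0,nv_1]$ in ratio $1:(n-1)$, so the two $d$-simplices produced have normalized volumes $n^{d-1}N$ and $(n-1)n^{d-1}N$. For $d\ge 2$ and $n\ge 2$ both numbers are \emph{at least} $N$, so the induction hypothesis---which only covers simplices of normalized volume strictly below $N$---does not apply to either piece. Iterating stellar subdivisions does not rescue the argument: the process terminates once every cell is an empty simplex, and nothing prevents those empty cells inside $nS$ from having normalized volume $\ge N$ as well. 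Turning the dilation idea into a proof would require knowing that some dilate $nS$ admits a lattice triangulation into simplices of normalized volume $<N$ (for instance a unimodular one), which is essentially the Knudsen--Mumford--Waterman theorem and is far from elementary.

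Betke and Kneser avoid dilation altogether. They take a nonzero lattice point $w$ in the half-open fundamental parallelepiped of $S$ and use a signed simplex identity (replacing vertices of $S$ by $w$) to express $\phi(S)$, up to lower-dimensional terms, through $\phi$ evaluated on lattice $d$-simplices of strictly smaller normalized volume; this is precisely the reduction your plan lacks. Your existence argument via the Ehrhart coefficients $L_i$ is fine when $G$ is a $\Q$-vector space; for an arbitrary abelian group $G$ you can instead use an integral basis such as the $f^*_j$ of~\eqref{eqn:disc_Had}, since Lemma~\ref{lem:f_star_rep} gives $f^*_j(\Delta_i)=\binom{i+1}{j+1}$ and hence the matrix $(f^*_j(\Delta_i))_{i,j}$ is unipotent lower-triangular over $\Z$.
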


In particular, there are lattice-invariant valuations $\phi_0,\dots,\phi_d :
\Pol{\Z^d} \rightarrow \Z$ such that $\phi_j(\Delta_i) = \delta_{ij}$ and every
valuation $\phi : \Pol{\Z^d} \rightarrow G$ admits a unique presentation as
\begin{equation}\label{eqn:phi_unique}
    \phi \ = \ 
    \phi(\Delta_0) \phi_0 \ + \  \cdots \ + \ \phi(\Delta_d) \phi_d.
\end{equation}

This implies that
\[
    \bVals( \Z^d, G) \ := \ \{ \phi : \Pol{\Z^d} \rightarrow G : \phi \text{
    lattice invariant}\} \ \cong \ G^{d+1}.
\]

We assume that $G$ is a real vector space of finite dimension,  partially
ordered by a closed and pointed convex cone $C$.  In this section we study the
cone of combinatorially positive, lattice-invariant valuations 
\[
    \bValsCP({\Z^d},G) \ := \ \ValsCP({\Z^d},G) \cap \bVals({\Z^d},G).
\]
In contrast to the case of (rigid-motion invariant) $\R^d$-valuations, this is
a proper convex cone.

\begin{prop} 
    The cone $\bValsCP({\Z^d},G)$ is of full dimension $(d+1) \cdot \dim C$.
\end{prop}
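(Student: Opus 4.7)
My plan is to bound $\dim\bValsCP(\Z^d,G)$ above and below by $(d+1)\dim C$, viewing the dimension of the cone as the dimension of its linear span.

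For the upper bound, the Betke--Kneser isomorphism (Theorem~\ref{thm:betke_kneser}) identifies $\bVals(\Z^d,G)$ with $G^{d+1}$ via $\phi\mapsto(\phi(\Delta_0),\ldots,\phi(\Delta_d))$. Any combinatorially positive $\phi$ is monotone by Proposition~\ref{prop:monotone}, so $\phi(\Delta_i)\succeq\phi(\emptyset)=0$, i.e.\ $\phi(\Delta_i)\in C$. Hence $\bValsCP(\Z^d,G)$ lies in $C^{d+1}$, whose linear span $(\mathrm{span}\,C)^{d+1}$ has dimension $(d+1)\dim C$.

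The lower bound is the heart of the proof. For each positive integer $n$, define $\phi^n(P):=\dvol(nP)$. Since $n$-dilation commutes with unions and intersections of polytopes, $\phi^n$ is a valuation; it is lattice-invariant because, for any affine lattice map $T$, the sets $nT(P)$ and $T(nP)$ differ by an integer translation. Combinatorial positivity then follows from Theorem~\ref{thm:h-nonneg}(iii) via the identity
\[
\phi^n(\relint\Delta)\ =\ \sum_{F\subseteq\Delta}(-1)^{\dim\Delta-\dim F}\dvol(nF)\ =\ \dvol(\relint(n\Delta))\ \ge\ 0,
\]
valid for any lattice simplex $\Delta$. Evaluating on standard simplices gives $\phi^n(\Delta_i)=\binom{n+i}{i}$, and since $\{\binom{x+i}{i}\}_{i=0}^d$ is a basis for real polynomials of degree at most $d$ while $n=1,\ldots,d+1$ are distinct, the resulting $(d+1)\times(d+1)$ matrix is invertible by a Lagrange-interpolation argument. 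Therefore $\phi^1,\ldots,\phi^{d+1}$ are linearly independent elements of $\bVals(\Z^d,\R)\cap\bValsCP(\Z^d,\R)$.

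To promote to $G$-valued valuations, pick linearly independent $c_1,\ldots,c_m\in C$ spanning the $m$-dimensional subspace $\mathrm{span}\,C$ (such a basis exists inside $C$ since $C$ spans this subspace). The $m(d+1)$ products $c_k\phi^n$ lie in $\bValsCP(\Z^d,G)$, and evaluating a vanishing linear combination on each $\Delta_i$ yields $\sum_k\bigl(\sum_n\lambda_{kn}\binom{n+i}{i}\bigr)c_k=0$, whence linear independence of the $c_k$ and invertibility of the binomial matrix force all $\lambda_{kn}=0$. This matches the upper bound. The main conceptual obstacle is exhibiting a sufficiently rich family of combinatorially positive lattice-invariant valuations; the dilations of $\dvol$ succeed precisely because the Ehrhart polynomials $\binom{n+i}{i}$ of the $d+1$ standard simplices have all distinct degrees $0,1,\ldots,d$, so varying $n$ produces $d+1$ linearly independent evaluations on the Betke--Kneser basis.
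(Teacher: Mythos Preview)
Your proof is correct and follows essentially the same strategy as the paper: both exhibit the $d+1$ dilated Ehrhart valuations $\phi^n(P)=\dvol(nP)$ for $n=1,\ldots,d+1$ as combinatorially positive lattice-invariant valuations, verify their linear independence, and then tensor with a basis $c_1,\ldots,c_m$ of $C$. The only minor differences are that you establish linear independence by evaluating on standard simplices (yielding the binomial matrix $\binom{n+i}{i}$) whereas the paper evaluates on dilated unit cubes (yielding the polynomials $(\ell n+1)^d$), and you supply an explicit upper bound via the Betke--Kneser embedding into $C^{d+1}$, which the paper leaves implicit.
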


\begin{proof}
    For $\ell=1,\dots,d+1$, define the valuation $\dvol^\ell(P) :=
    \dvol(\ell\cdot
    P)$. Then $\dvol^\ell$ is lattice invariant and
    \[
        \dvol^\ell(\relint (P))  \ = \ \dvol(\relint(\ell\cdot P))   \ \ge \ 0
    \]
    shows that $\dvol^\ell$ is combinatorially positive.  Moreover, 
    $\dvol^1,\dots,\dvol^{d+1}$ are linearly independent. Indeed, assume that
    $\alpha_1\dvol^1 + \cdots + \alpha_{d+1} \dvol^{d+1} = 0$.  We have
    $\dvol^\ell(n[0,1]^d) = (\ell n + 1)^d$ and 
    \[
        \alpha_1 (n+1)^d + \alpha_2 (2n + 1)^d + \cdots + \alpha_{d+1}((d+1)n
        + 1)^d \ = \ 0
    \]
    for all $n$ implies $\alpha_i = 0$ for all $i$.
    
    Now, let $m = \dim C$ and let $c_1,\dots,c_m \in C$ be linearly
    independent. The lattice-invariant valuations $\{c_i E^{\ell} : 1 \le i \le m, 1 \le \ell
    \le d+1\}$ are linearly independent and  combinatorially positive which
    proves the claim.
\end{proof}

We will give a detailed description of $\bValsCP({\Z^d},G)$ that complements the
Betke--Kneser theorem.

\begin{thm}\label{thm:bValsCP}
    A lattice-invariant valuation $\phi : \Pol{\Z^d} \rightarrow G$ is
    combinatorially positive if and only if $\phi(\relint(\Delta_i)) \succeq 0$
    for all standard simplices $\Delta_i$, $i=0,\dots,d$. In particular,
    \[
        \bValsCP(\Z^d,G) \ \cong \ C^{d+1}.
    \]
\end{thm}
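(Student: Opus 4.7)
The forward direction is immediate from Theorem \ref{thm:h-nonneg}: if $\phi$ is combinatorially positive, then $\phi(\relint(\Delta)) \succeq 0$ for every lattice simplex $\Delta$, in particular for each standard simplex $\Delta_i$. For the converse, assuming $\phi(\relint(\Delta_i)) \succeq 0$ for $i = 0, \ldots, d$, Theorem \ref{thm:h-nonneg} reduces the problem to showing $\phi(\relint(\Delta)) \succeq 0$ for every lattice simplex $\Delta$, and the plan is to prove the stronger, $\phi$-independent identity
\[
    \phi(\relint(\Delta)) \ = \ \sum_{i=0}^r c_i(\Delta)\, \phi(\relint(\Delta_i))
\]
with nonnegative integer coefficients $c_i(\Delta)$ depending only on the $\GL(d,\Z)$-equivalence type of $\Delta$; compatibility of $\preceq$ with sums and nonnegative scalars then closes the argument.

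The existence of a universal linear expression with rational coefficients is a direct consequence of Betke--Kneser (Theorem \ref{thm:betke_kneser}): applying lattice-invariance to the face structure of $\Delta_j$ yields $\phi(\Delta_j) = \sum_{k=0}^j \binom{j+1}{k+1}\phi(\relint(\Delta_k))$, so the change of basis between $(\phi(\Delta_i))_i$ and $(\phi(\relint(\Delta_i)))_i$ in $\bVals(\Z^d, G) \cong G^{d+1}$ is triangular and invertible. For nonnegativity of the $c_i(\Delta)$ I would induct on the normalized volume of $\Delta$: when $\Delta$ is unimodular it is $\GL(d,\Z)$-equivalent to $\Delta_r$ and hence $c_r(\Delta) = 1$, $c_i(\Delta) = 0$ for $i < r$, handling the base case; in the non-unimodular case a stellar subdivision from a non-vertex lattice point $p \in \Delta$, interpreted via Corollary \ref{cor:h-add} and the half-open decomposition machinery of Section \ref{sec:halfopen}, expresses $\phi(\relint(\Delta))$ as a sum of contributions from pieces of strictly smaller normalized volume, to which the induction applies.

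The main obstacle is the hollow case, when the only lattice points of $\Delta$ are its vertices (e.g.\ Reeve tetrahedra in dimensions $\geq 3$) and no stellar subdivision is available. To handle these I would pass to a dilation $N\Delta$ for $N$ large enough that a unimodular triangulation exists (Knudsen--Mumford--Waterman), carry out the argument at level $N$, and descend to $\Delta$ via Ehrhart--Macdonald reciprocity (Theorem \ref{thm:reciprocity}) in the form $\phi(\relint(\Delta)) = (-1)^r \phi_\Delta(-1)$ combined with the polynomial relation between $\phi(n\Delta)$ and $\phi(nN\Delta)$. Finally, the isomorphism $\bValsCP(\Z^d, G) \cong C^{d+1}$ is immediate: the map $\phi \mapsto (\phi(\relint(\Delta_i)))_{i=0}^d$ is a linear isomorphism onto $G^{d+1}$, and the characterization just established identifies its restriction to the combinatorially positive cone with $C^{d+1}$.
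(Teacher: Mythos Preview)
Your reduction to showing that the universal coefficients $c_i(\Delta)$ are nonnegative is exactly right; by Lemma~\ref{lem:f_star_rep} these coefficients are $c_i(\Delta) = f_i^*(\relint(\Delta))$, so the converse is equivalent to $f_i^*(\relint(\Delta)) \ge 0$ for every lattice simplex~$\Delta$. The induction on normalized volume via stellar subdivision is sound when $\Delta$ contains a non-vertex lattice point, but your handling of the empty case is where the argument breaks.

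The problem is that the ``descent'' from $N\Delta$ to $\Delta$ cannot preserve nonnegativity. A unimodular triangulation of $N\Delta$ does express each $\phi(kN\Delta)$ as a nonnegative integer combination of the $\phi(\relint(\Delta_i))$, but to recover $\phi(\relint(\Delta)) = (-1)^r\phi_\Delta(-1)$ you must interpolate the degree-$r$ polynomial $\phi_\Delta$ from its values at $0, N, 2N, \dots, rN$ and evaluate at $n=-1$. The Lagrange weights at $-1$ alternate in sign, so the resulting expression for $\phi(\relint(\Delta))$ is an \emph{alternating} combination of nonnegative quantities, and nothing forces it to land in $C$. Reciprocity only tells you what the evaluation at $-1$ means; it does not supply the missing positivity. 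The paper itself remarks, just before stating the theorem, that the direct inductive attack already requires the classification of empty $3$-simplices and does not obviously extend beyond.

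The paper sidesteps empty simplices entirely. Instead of attacking $c_i(\Delta) = f_i^*(\relint\Delta)$ one simplex at a time, it proves that each $f_i^*$ is itself a combinatorially positive valuation. This is Theorem~\ref{thm:f_star_equivalent} specialized to $\phi = \dvol$: combining reciprocity with the change of binomial bases gives
\[
    f^*_{r-k}(\relint(P)) \ = \ \sum_{i=k}^{r} \binom{i}{k}\, h_i^*(P)
\]
for every $r$-dimensional lattice polytope $P$, and Stanley's nonnegativity $h_i^*(P) \ge 0$ (Corollary~\ref{cor:Ehr_NN}) then yields $f_i^*(\relint(\Delta)) \ge 0$ for all lattice simplices in one stroke, with no induction and no case analysis. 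The isomorphism $\bValsCP(\Z^d,G) \cong C^{d+1}$ then follows exactly as in your last paragraph.
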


The theorem is equivalent to 
\begin{equation}\label{eqn:bValsCP_simplicial}
    \bValsCP(\Z^d,G) \ = \ \{ \phi \in \bVals({\Z^d},G) : \phi(\relint \Delta_i)
    \succeq 0 \text{ for all } i=0,\dots,d \}.
\end{equation}
The inclusion `$\subseteq$' follows from Theorem~\ref{thm:h-nonneg}(iii).  To
prove the reverse inclusion it is sufficient to show that every
lattice-invariant valuation $\phi$ is combinatorially positive if
$\phi(\relint(\Delta_i)) \succeq 0$ for all $i =0,\dots,d$. In dimensions $d
\le 2$, this is true since every lattice polytope can be triangulated into
unimodular simplices. In dimension $d=3$, a direct approach uses the
classification of empty lattice simplicies due to Reznick~\cite[Corollary
2.7]{Reznick86} and induction on the lattice volume similar to
Betke--Kneser~\cite{BetkeKneser}. 

Our proof of Theorem~\ref{thm:bValsCP} pursues a different strategy: Since the
right-hand side of~\eqref{eqn:bValsCP_simplicial} is a polyhedral cone, it is
sufficient to verify it is generated by a set of combinatorially positive
valuations. For the case $(G,C) = (\R,\R_{\ge0})$, such generators will be
given in the next section.

\section{A discrete Hadwiger theorem}\label{sec:hadwiger}

Hadwiger's characterization theorem~\cite{Hadwiger} states that every continuous
rigid-motion invariant valuation $\phi$ on convex bodies in $\R^d$ is uniquely
determined by the evaluations $(\phi(S_i))_{i=0,\dots,d}$ where $S_0,\dots,S_d
\subset \R^d$ are arbitrary but fixed convex bodies with $\dim S_r = r$.  
From this it is easy to deduce that the quermassintegrals $W_i$, i.e. the
coefficients of Steiner polynomial
\[
    \vol(t K + B_d) \  = \ \sum_{i=0}^d \binom{d}{i}W_{d-i}(K)\, n^i
\]
are linearly independent and hence span the space of continuous rigid-motion
invariant valuations. The quermassintegral $W_i$ is homogeneous of degree $d-i$
and hence up to scaling $W_0,\dots,W_d$ is the unique homogeneous basis for
this space.

The Betke--Kneser result (Theorem~\ref{thm:betke_kneser}) is a natural discrete
counterpart: Every lattice-invariant valuation $\phi : \Pol{\Z^d} \rightarrow
G$ is uniquely determined by its values on $d+1$ lattice simplices of
different dimensions. A homogeneous basis for the space of lattice-invariant
valuations is given by the coefficients of the Ehrhart polynomial
\[
    \dvol_P(n) \ = \ \ e_d(P) n^d + \cdots + e_0(P).
\]

However, there are many desirable properties of quermassintegrals that the
valuations $e_i$ lack.  As they are special mixed volumes, the
quermassintegrals are nonnegative and monotone. These properties distinguish
them from all other basis for the space of rigid-motion invariant valuations:
The cones of nonnegative and, equivalently, monotone rigid-motion invariant
valuations are spanned by the quermassintegrals. Unfortunately, the valuations
$e_i$ are neither monotone nor nonnegative; cf.~\cite[Chapter 3]{BR}.  This
was Stanley's original motivation for the $h^*$-monotonicity
result~\cite{Stan93} given in Corollary~\ref{cor:Ehr_NN}. In this section we
study a basis for $\bVals(\Z^d,\Z)$ that is combinatorially positive and hence
by the results of Section~\ref{sec:weak} also nonnegative and monotone. This
yields a discrete Hadwiger Theorem. 

In a different binomial basis Ehrhart's result~\eqref{eqn:hE-vec} states that 
\begin{equation}\label{eqn:disc_Had}
    \dvol_P(n) \ = \ f_0^* (P)\binom{n-1}{0}  + f_1^*
    (P)\binom{n-1}{1}  +  \cdots  +   f_d^*(P)\binom{n-1}{d}.
\end{equation}
for some $f_i^*(P) \in \Z$. These coefficients take the role of the
quermassintegrals for combinatorial positivity.

\begin{thm}\label{thm:disc_Had}
    Let $\phi : \Pol{\Z^d} \rightarrow \R$ be a lattice-invariant valuation. Then
    $\phi$ is combinatorially positive if and only if 
    \[
        \phi \ = \ \alpha_0 f_0^* + \alpha_1 f_1^* +  \cdots + \alpha_d f_d^*
    \]
    for some $\alpha_0,\dots,\alpha_d \ge 0$.
\end{thm}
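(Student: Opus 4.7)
The plan is to show that $f^*_0,\ldots,f^*_d$ form a basis of $\bVals(\Z^d,\R)$ that is essentially dual to the linear functionals $\phi \mapsto \phi(\relint \Delta_j)$, in the sense
\[
f^*_i(\relint \Delta_j) \ = \ \delta_{ij}.
\]
Combinatorial positivity will then translate directly into nonnegativity of coordinates.

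First I would verify that each $f^*_i$ is a lattice-invariant valuation. Since $\{\binom{n-1}{i}\}_{i=0}^d$ is a basis of the polynomials in $n$ of degree at most $d$, the coefficient $f^*_i(P)$ is a fixed rational linear combination of $\dvol(P),\dvol(2P),\ldots,\dvol((d+1)P)$; each of these is a lattice-invariant valuation, hence so is $f^*_i$. Concretely, $f^*_i(P) = \sum_{j=0}^{i} (-1)^{i-j}\binom{i}{j}\dvol((j+1)P)$ is the $i$-th forward difference of the Ehrhart polynomial of $P$ evaluated at $n=1$.

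The heart of the proof is combinatorial positivity of each $f^*_i$. By Theorem~\ref{thm:h-nonneg}(iii) it suffices to show $f^*_i(\relint S)\ge 0$ for every lattice simplex $S$ of dimension $r$. Interchanging the inclusion--exclusion defining $\relint$ with the linear functional that reads off $f^*_i$ shows that $f^*_i(\relint S)$ is the $i$-th forward difference at $n=1$ of the polynomial
\[
g(n) \ := \ \dvol(\relint(nS)) \ = \ (-1)^r \dvol_S(-n),
\]
the last equality being Ehrhart--Macdonald (Theorem~\ref{thm:ehrhart_macdonald}). Expanding $\dvol_S$ via its $h^*$-vector and using $\binom{-n+r-k}{r}=(-1)^r\binom{n+k-1}{r}$ gives
\[
g(n) \ = \ \sum_{k=0}^r h_k^*(S)\binom{n+k-1}{r},
\]
and iterating the standard identity $\binom{n+k}{r}-\binom{n+k-1}{r}=\binom{n+k-1}{r-1}$ yields
\[
f^*_i(\relint S) \ = \ \sum_{k=0}^r h_k^*(S)\binom{k}{r-i} \ \ge \ 0,
\]
where nonnegativity of $h_k^*(S)$ is Stanley's theorem (Corollary~\ref{cor:Ehr_NN}). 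Specialising to $S=\Delta_j$, for which $h_0^*(\Delta_j)=1$ and $h_k^*(\Delta_j)=0$ for $k\ge 1$, produces the duality $f^*_i(\relint \Delta_j)=\binom{0}{j-i}=\delta_{ij}$.

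From here everything is formal. Any nonnegative combination $\sum_i \alpha_i f^*_i$ is combinatorially positive by linearity. The duality relation forces linear independence of the $f^*_i$, and since $\bVals(\Z^d,\R)$ has dimension $d+1$ by Betke--Kneser (Theorem~\ref{thm:betke_kneser}), they form a basis. Every lattice-invariant $\phi$ therefore has a unique expansion $\phi=\sum_i \alpha_i f^*_i$ with $\alpha_i=\phi(\relint \Delta_i)$; if $\phi$ is combinatorially positive, Theorem~\ref{thm:h-nonneg}(iii) applied to $\Delta_i$ gives $\alpha_i\ge 0$. As a byproduct this also completes the proof of Theorem~\ref{thm:bValsCP} in the case $(G,C)=(\R,\R_{\ge 0})$. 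The only nontrivial step is the $h^*$-expansion of $g$ above; once that reciprocity identity is in place, the nonnegativity conclusion is immediate.
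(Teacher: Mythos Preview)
Your argument is correct and follows essentially the same route as the paper: the key identity $f^*_i(\relint S)=\sum_k h_k^*(S)\binom{k}{r-i}$ obtained via Ehrhart--Macdonald reciprocity, combined with Stanley's nonnegativity, is exactly the content of the paper's Theorem~\ref{thm:f_star_equivalent} specialised to $\phi=\dvol$, and the duality $f^*_i(\relint\Delta_j)=\delta_{ij}$ is the paper's Lemma~\ref{lem:f_star_rep}. The only organisational differences are that the paper derives the duality directly from $\dvol_{\relint(\Delta_i)}(n)=\binom{n-1}{i}$ rather than as a specialisation, and states the $f^\phi$-nonnegativity result for arbitrary $\phi$ rather than just $\dvol$; neither changes the substance.
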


Since $\binom{n-1}{0},\dots,\binom{n-1}{d}$ is a basis for univariate polynomials
of degree $\le d$, the valuations $f_0^*,\dots,f_d^*$ are a basis for
$\bVals(\Z^d,\R)$. The following lemma gives an explicit expression of $\phi$
in terms of this basis.

\begin{lem}\label{lem:f_star_rep}
    For all $i,j = 0,1,\dots,d$
    \[
        f_j^*(\relint(\Delta_i)) \ = \ \delta_{ij}.
    \]
    In particular,
    for every lattice invariant valuation $\phi \in \bVals(\Z^d,G)$
    \[
        \phi \ = \ 
        \phi(\relint(\Delta_0))\, f_0^* \ + \ 
        \phi(\relint(\Delta_1))\, f_1^* \ + \ 
        \cdots \ + \
        \phi(\relint(\Delta_d))\, f_d^*.
    \]
\end{lem}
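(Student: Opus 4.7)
The plan is to pin down $\dvol(\relint(n\Delta_i))$ as a polynomial in $n$, read off $f_j^*(\relint(\Delta_i))$ by coefficient comparison, and then bootstrap to general $\phi$ via Betke--Kneser. First, $\dvol_{\Delta_i}(n) = \binom{n+i}{i}$, so Ehrhart--Macdonald reciprocity (Theorem~\ref{thm:ehrhart_macdonald}), together with the polynomial identity $\binom{-n+i}{i} = (-1)^i\binom{n-1}{i}$, yields
\[
    \dvol(\relint(n\Delta_i)) \ = \ \binom{n-1}{i}
\]
as polynomials in $n$.

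Next, $P \mapsto f_j^*(P)$ is itself a lattice-invariant valuation: since $\{\binom{n-1}{j}\}_{j=0}^d$ is a basis for polynomials of degree $\le d$, each $f_j^*$ is a linear functional of the Ehrhart polynomial and inherits the valuation property from $\dvol$. Applying~\eqref{eqn:phi_relint} termwise to the defining expansion $\dvol(nP) = \sum_j f_j^*(P)\binom{n-1}{j}$, and using that the faces of $nP$ are precisely $\{nF : F \text{ face of } P\}$ for $n \ge 1$, I would obtain the polynomial identity
\[
    \dvol(\relint(nP)) \ = \ \sum_{j=0}^d f_j^*(\relint(P))\binom{n-1}{j}
\]
for every $P \in \Pol{\Z^d}$. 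Specializing $P = \Delta_i$ and comparing coefficients with the previous display forces $f_j^*(\relint(\Delta_i)) = \delta_{ij}$.

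For the "in particular" statement, I would set
\[
    \psi \ := \ \phi - \sum_{j=0}^d \phi(\relint(\Delta_j))\, f_j^*,
\]
which is again a lattice-invariant valuation. By the first part, $\psi(\relint(\Delta_i)) = 0$ for every $i$. Every $j$-dimensional face $F \subseteq \Delta_i$ is lattice-equivalent to $\Delta_j$: either $0 \in F$ and a coordinate permutation suffices, or after translating by a vertex of $F$ the remaining vertices become primitive lattice vectors spanning a saturated rank-$j$ sublattice of $\Z^d$, which can be completed to a full basis of $\Z^d$, yielding the required unimodular affine transformation. By lattice invariance, $\psi(\relint(F)) = \psi(\relint(\Delta_j)) = 0$ for every face $F$ of $\Delta_i$, and~\eqref{eqn:sumrelint} gives $\psi(\Delta_i) = 0$ for all $i = 0,\dots,d$. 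The Betke--Kneser theorem (Theorem~\ref{thm:betke_kneser}) then forces $\psi = 0$, which is the claimed expansion.

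The only delicate point is ensuring that the identity derived in the second paragraph holds as a \emph{polynomial} identity in $n$ rather than merely pointwise, so that coefficient comparison in the basis $\{\binom{n-1}{j}\}_{j=0}^d$ is legitimate; together with the mild lattice-equivalence observation for the faces of $\Delta_i$, the rest is routine bookkeeping.
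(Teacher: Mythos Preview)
Your proof is correct and follows essentially the same approach as the paper's: both deduce $\dvol_{\relint(\Delta_i)}(n)=\binom{n-1}{i}$ (the paper states this in one line, you justify it via reciprocity) and then read off $f_j^*(\relint(\Delta_i))=\delta_{ij}$; for the second part, both use that every $r$-face of $\Delta_i$ is lattice-isomorphic to $\Delta_r$ together with~\eqref{eqn:sumrelint} to reduce to Betke--Kneser, with your $\psi=0$ packaging being a cosmetic rephrasing of the paper's uniqueness argument. Your explicit derivation of $\dvol(\relint(nP))=\sum_j f_j^*(\relint(P))\binom{n-1}{j}$ and your handling of the polynomial-vs-pointwise issue simply spell out what the paper leaves implicit.
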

\begin{proof}
    For the first claim, we simply note that $\dvol_{\relint(\Delta_i)}(n)
    = \binom{n-1}{i}$. For the second claim, observe that if
    $\phi(\relint(\Delta_i)) = a_i$ for all $i=0,\dots,d$, then
    \eqref{eqn:sumrelint} together with the fact that every $r$-face of
    $\Delta_i$ is lattice isomorphic to $\Delta_r$ yields
    \[
        \phi(\Delta_i) \ = \ \sum_{r=0}^i \binom{i+1}{r+1} a_r.
    \]
    By Theorem~\ref{thm:betke_kneser}, there is a unique valuation taking
    these values on standard simplices and~\eqref{eqn:phi_relint} finishes the
    proof.
\end{proof}

Thus, if $\phi$ is combinatorially positive, then $\alpha_i =
\phi(\relint(\Delta_i)) \ge 0$ which proves necessity in
Theorem~\ref{thm:disc_Had}. For sufficiency, we need to show that $f_j^*$ is
combinatorially positive for all $j$. That is, we need to show that
$f_j^*(\relint \Delta) \ge 0$ for all lattice simplices $\Delta$. 

For a lattice polytope $P \in \Pol{\Z^d}$, $f^*(P) =
(f_0^*(P),\dots,f_d^*(P))$ is called the \emph{$f^*$-vector}. The
\mbox{$f^*$-vector} was introduced and studied by Breuer~\cite{breuer}. He
showed that $f_j^*(\relint(P)) \ge 0$ and gave an enumerative
interpretation for lattice simplices. We deduce the nonnegativity
result from more general considerations. For a translation-invariant valuation
$\phi: \PolL \rightarrow G$, where $\LL$ is not restricted to lattices, we
define its \Defn{$f^*$-vector} $f^\phi = (f_0^\phi,\dots,f_d^\phi)$ such that
for every $P \in \PolL$ 
\[
    \phi_P(n) \ = \ \sum_{i=0}^d f_i^\phi(P) \binom{n-1}{i}
\]
for all $n \ge 0$. Equivalently, $f_i^\phi$ is given by
\[
    f_i^\phi(P) \ := \ \sum_{k=0}^i \binom{i}{k} (-1)^{i-k} \phi((k+1)P)
\]
Notice the $f_i^\phi$ are translation-invariant $\LL$-valuations. 

\begin{thm}\label{thm:f_star_equivalent}
    Let $\LL \subset \R^d$ be a lattice or a finite-dimensional vector
    subspace over a subfield of $\R$ and $G$ a partially ordered abelian
    group.  For a $\LL$-valuation $\phi\colon \PolL \rightarrow G$ the
    following are equivalent:
    \begin{itemize}
        \item[(i)] $\phi$ is combinatorially positive.
        \item[(ii)] $f_i ^\phi$ is combinatorially positive for all $i=0,\ldots 
        d$.
   \end{itemize} 
\end{thm}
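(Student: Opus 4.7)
The implication (ii)$\Rightarrow$(i) is immediate: evaluating the defining identity $\phi_P(n) = \sum_{i=0}^d f_i^\phi(P)\binom{n-1}{i}$ at $n=1$ gives $\phi(P) = f_0^\phi(P)$, so $\phi = f_0^\phi$ as $\LL$-valuations. In particular, combinatorial positivity of $f_0^\phi$ coincides with that of $\phi$.

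For the main direction (i)$\Rightarrow$(ii), Theorem~\ref{thm:h-nonneg} reduces the task to showing $f_i^\phi(\relint\Delta)\succeq 0$ for every $\LL$-simplex $\Delta$ of dimension $r$ and every $0\le i\le d$. Since each $f_i^\phi$ is itself a $\LL$-valuation and $f_i^\phi(F)$ is, by definition, the coefficient of $\binom{n-1}{i}$ in $\phi_F(n)$, the extension~\eqref{eqn:phi_relint} together with linearity yields
\[
f_i^\phi(\relint\Delta) \ = \ \text{coefficient of } \binom{n-1}{i} \text{ in } \phi(\relint(n\Delta)),
\]
so the goal becomes expanding the polynomial $\phi(\relint(n\Delta))$ in the basis $\{\binom{n-1}{i}\}$ with nonnegative coefficients.

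The key step is to invoke Ehrhart--Macdonald reciprocity (Theorem~\ref{thm:reciprocity}) to write $\phi(\relint(n\Delta)) = (-1)^r \phi_{-\Delta}(-n)$. Inserting the $h^{\ast}$-expansion $\phi_{-\Delta}(n) = \sum_{j=0}^r h_j^\phi(-\Delta)\binom{n+r-j}{r}$ and using the identity $\binom{-m}{r} = (-1)^r \binom{m+r-1}{r}$ gives
\[
\phi(\relint(n\Delta)) \ = \ \sum_{j=0}^r h_j^\phi(-\Delta) \binom{n+j-1}{r}.
\]
Now apply the Chu--Vandermonde identity $\binom{(n-1)+j}{r} = \sum_{i=0}^r \binom{n-1}{i}\binom{j}{r-i}$ and collect the coefficient of $\binom{n-1}{i}$, obtaining the clean formula
\[
f_i^\phi(\relint\Delta) \ = \ \sum_{j=0}^r \binom{j}{r-i}\, h_j^\phi(-\Delta).
\]
Since $\LL$ is closed under negation, $-\Delta\in\PolL$, so the hypothesis (i) gives $h_j^\phi(-\Delta)\succeq 0$ for every $j$. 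The binomial coefficients $\binom{j}{r-i}$ are nonnegative integers and the partial order $\preceq$ is compatible with addition, so the right-hand side is $\succeq 0$, establishing (ii).

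\textbf{Main obstacle.} The challenge is that the direct expansion $f_i^\phi(\relint\Delta) = \sum_{k=0}^i \binom{i}{k}(-1)^{i-k}\phi(\relint((k+1)\Delta))$ contains alternating signs and resists a direct positivity argument. The insight is to apply reciprocity \emph{before} extracting $f_i^\phi$: rewriting $\phi(\relint(n\Delta))$ in the ``reciprocal'' binomial basis $\{\binom{n+j-1}{r}\}$ exposes manifestly nonnegative coefficients $h_j^\phi(-\Delta)$, and the subsequent change to the $\binom{n-1}{i}$ basis via Vandermonde is automatic and preserves nonnegativity.
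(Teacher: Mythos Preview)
Your proof is correct and takes essentially the same route as the paper: for (ii)$\Rightarrow$(i) both use $\phi=f_0^\phi$, and for (i)$\Rightarrow$(ii) both invoke reciprocity to write $\phi(\relint(n\Delta))=\sum_j h_j^\phi\binom{n+j-1}{r}$ and then pass to the $\binom{n-1}{i}$ basis (the paper's ``collecting terms'' is exactly your Vandermonde step). Your version is in fact slightly more careful than the paper's in tracking the reflection---writing $h_j^\phi(-\Delta)$ and noting $-\Delta\in\PolL$---which is the precise reading of Theorem~\ref{thm:reciprocity}.
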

\begin{proof}
    For the implication $(ii)\Rightarrow (i)$ simply observe that 
    \[
        \phi(\relint(P)) \ = \ \phi_{\relint(\Delta)}(1) \ = \ f^\phi_0
        (\relint(P)) \ge  0
    \]
    for all $P \in \PolL$. The claim now follows from
    Theorem~\ref{thm:h-nonneg}.

    For $(i)\Rightarrow (ii)$, we claim that 
    \[
        f^\phi _{r-k}(\relint(-P)) \ = \ 
        \sum_{i=k}^r 
        h^\phi_i(P)
        \binom{i}{k}
    \]
    for any $r$-dimensional $\LL$-polytope $P$. Assuming that $\phi$ is
    $h^*$-nonnegative then shows combinatorial positivity of $f_i^\phi$.
    To prove the claim, we use Theorem~\ref{thm:reciprocity} together with the
    identity
    $(-1)^r\binom{-n+r-k}{r} = \binom{n-1+k}{r}$ to get
    \[
        \phi_{\relint(-P)}(n)  \ = \ (-1)^r \phi_P(-n) \ = \
        h^\phi_0 (P) \binom{n-1}{r} + 
        h_1^\phi(P) 
        \binom{n}{r} + \cdots 
        + h_r^\phi(P) \binom{n-1+r}{r}
    \]
    and collecting terms completes the proof.
\end{proof}

To complete the proof of Theorem~\ref{thm:disc_Had}, we use Stanley's
nonnegativity of the $h^*$-vector (Corollary~\ref{cor:Ehr_NN}) together with
Theorem~\ref{thm:f_star_equivalent}. The same reasoning also yields a proof of
Theorem~\ref{thm:bValsCP}. 

\begin{proof}[Proof of Theorem~\ref{thm:bValsCP}]
    The map $\Psi : \bVals({\Z^d},G) \rightarrow G^{d+1}$ given by 
    \[
        \phi \ \mapsto (\phi(\relint \Delta_i))_{i=0,\dots,d}
    \]
    is an isomorphism by Lemma~\ref{lem:f_star_rep}. In particular $\Psi$
    takes $\bValsCP({\Z^d},G)$ into $C^{d+1}$. To show that this is a surjection,
    we use Theorem~\ref{thm:f_star_equivalent} to deduce that for every
    $a = (a_0,\dots,a_d) \in C^{d+1}$ the valuation
    \[
        \phi \ = \ a_0 f_0^* + \cdots + a_d f_d^*
    \]
    is combinatorially positive with $\Psi(\phi) = a$.
\end{proof} 
    
It turns out that there is also a Hadwiger-type result for weakly
$h^*$-monotone valuations. For this consider the Ehrhart polynomial in the
basis
\newcommand\wfs{\widetilde{f}^*}
\begin{equation}\label{eqn:fbar_basis}
    E_P(n) \ = \ \wfs_0(P) \binom{n}{0} \ + \ \cdots \ + \ \wfs_0(P)
    \binom{n}{d}.
\end{equation}

\begin{thm}\label{thm:disc_Had_weak}
    A lattice-invariant valuation $\phi : \Pol{\Z^d} \rightarrow \R$ is
    weakly $h^*$-monotone if and only if 
    \[
        \phi \ = \ \alpha_0 \wfs_0 + \alpha_1 \wfs_1 +  \cdots +
        \alpha_d \wfs_d
    \]
    for some $\alpha_0,\dots,\alpha_d \ge 0$.
\end{thm}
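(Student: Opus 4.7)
The plan is to mirror the proof architecture of Theorem~\ref{thm:disc_Had}, substituting the binomial basis $\binom{n-1}{i}$ by $\binom{n}{i}$ and combinatorial positivity by weak $h^\ast$-monotonicity. For any $\LL$-valuation $\phi$, define $\widetilde{f}^\phi_k$ to be the $\LL$-valuation extracting the coefficient of $\binom{n}{k}$ in $\phi_P(n)$. Since $\binom{n}{0},\dots,\binom{n}{d}$ is a basis for polynomials of degree at most $d$, the $\wfs_k := \widetilde{f}^\dvol_k$ form a basis for $\bVals(\Z^d,\R)$. The key intermediate step is a weak-monotonicity analog of Theorem~\ref{thm:f_star_equivalent}: a $\LL$-valuation $\phi$ is weakly $h^\ast$-monotone if and only if each $\widetilde{f}^\phi_k$ is weakly $h^\ast$-monotone.

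For this equivalence I would use Theorem~\ref{thm:h-weaknonneg}(iii) as the working characterization throughout. One direction is a one-line evaluation at $n=1$: for every half-open $\LL$-simplex $\HO{S}$,
\[
\phi(\HO{S}) \ = \ \phi_{\HO{S}}(1) \ = \ \widetilde{f}^\phi_0(\HO{S}) + \widetilde{f}^\phi_1(\HO{S}),
\]
since $\binom{1}{k}=0$ for $k \ge 2$, so nonnegativity of the two terms forces that of $\phi(\HO{S})$. For the converse, I would combine the half-open Ehrhart expansion~\eqref{eqn:halfopenpoly} with the Vandermonde identity $\binom{n+r-i}{r} = \sum_{k=i}^r \binom{r-i}{k-i}\binom{n}{k}$ to derive, for every $r$-dimensional half-open $\LL$-simplex $\HO{S}$,
\[
\widetilde{f}^\phi_k(\HO{S}) \ = \ \sum_{i=0}^k h^\phi_i(\HO{S}) \binom{r-i}{k-i}.
\]
Each $h^\phi_i(\HO{S}) = \phi(\Pi_i(\HO{S}))$ is nonnegative by the partly-open hypersimplex dissection used in the proof of Theorem~\ref{thm:h-weaknonneg}, so every summand is nonnegative.

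With this equivalence the theorem splits into two steps. Sufficiency: since $\dvol$ is combinatorially positive by Corollary~\ref{cor:Ehr_NN} and hence weakly $h^\ast$-monotone, the equivalence applied to $\phi = \dvol$ shows that each $\wfs_k$ is weakly $h^\ast$-monotone, and nonnegative linear combinations stay in this convex cone. Necessity: introduce the half-open standard simplex $\HO{\Delta}_i := \Delta_i \setminus (F_1 \cup \cdots \cup F_i)$ obtained by deleting the facets through the origin. A direct lattice-point count gives $\dvol_{\HO{\Delta}_i}(n) = \binom{n}{i}$, so $\wfs_j(\HO{\Delta}_i) = \delta_{ij}$, and writing $\phi = \sum_k \alpha_k \wfs_k$ yields $\alpha_i = \phi(\HO{\Delta}_i) \ge 0$ via Theorem~\ref{thm:h-weaknonneg}(iii). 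The main technical obstacle is the Vandermonde identity together with verifying $h^\phi_i(\HO{S}) \ge 0$ under the merely weak hypothesis: unlike in the combinatorially positive setting, this cannot be read off from values on relative interiors of simplices but requires the full half-open dissection machinery of Section~\ref{sec:halfopen}.
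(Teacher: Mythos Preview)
Your approach is essentially the one sketched in the paper: establish the equivalence ``$\phi$ weakly $h^*$-monotone $\Leftrightarrow$ every $\widetilde{f}^\phi_k$ weakly $h^*$-monotone'' via the Vandermonde expansion, apply it to $\dvol$ for sufficiency, and read off $\alpha_i$ from a suitable half-open test simplex for necessity. The paper only hints at this; your explicit identification $\alpha_i = \phi(\HO{\Delta}_i)$ with $\dvol_{\HO{\Delta}_i}(n)=\binom{n}{i}$ is the natural analog of Lemma~\ref{lem:f_star_rep} and is a clean way to do it.

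One small but genuine imprecision: your claim that $h^\phi_i(\HO{S}) = \phi(\Pi_i(\HO{S})) \ge 0$ under the weak hypothesis is only valid for \emph{proper} half-open simplices. The dissection of $\Pi_i$ into half-open simplices in the proof of Theorem~\ref{thm:h-weaknonneg} requires $I_q(S)\neq\emptyset$ (the choice of $q'$ with $q_j>1$ for all $j\notin I$ is impossible when $I=\emptyset$), and indeed for closed simplices the inequality can fail: take $\phi=\chi$, which is weakly $h^*$-monotone, yet $h^\chi_1(S)=-\dim S<0$. The paper is careful here, explicitly invoking only proper half-open simplices. The fix is easy: either observe that Theorem~\ref{thm:h-weaknonneg}(iii) restricted to proper half-open simplices together with $\widetilde{f}^\phi_k(\{0\})\ge 0$ (trivially $\phi(\{0\})$ for $k=0$ and $0$ otherwise) already forces weak $h^*$-monotonicity, or handle a closed simplex $S$ by writing $S=(S\setminus F)\uplus F$ and inducting on dimension.
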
        

As for the proof of Theorem~\ref{thm:disc_Had}, the crucial observation is
that $\phi$ is weakly $h^*$-monotone if and only if an analogous extension
$\widetilde{f}^\phi_i$ is weakly $h^*$-monotone for all $i$. Necessity follows from
the proof of Theorem~\ref{thm:h-weaknonneg} where it is shown that if $\phi$
is weakly $h^*$-monotone then $h^*_i(\HO{S}) \succeq 0$ for all proper half-open
simplices $\HO{S}$.

\subsection{Dimension $d=2$} \label{ssec:d2}
In this section we study in detail the
cone $\bValsCP(\Z^2,\R)$ in relation to the cones
\begin{align*}
    \bValsM(\Z^2,\R) \ &:= \ \{ \phi \in \bVals(\Z^2,\R) : \phi(P) \ge
    \phi(Q) \text{ for lattice polytopes } Q \subseteq P \} \text{ and }\\
    \bValsN(\Z^2,\R) \ &:= \ \{ \phi \in \bVals(\Z^2,\R) : \phi(P) \ge 0
     \text{ for } P \in \Pol{\Z^2}\}.
\end{align*}

The results of Section~\ref{sec:weak} imply 
\[
    \bValsCP(\Z^2,\R) \ \subsetneq \ \bValsM(\Z^2,\R) \ \subsetneq \
    \bValsN(\Z^2,\R).
\]

We study these cones in the usual monomial basis.  From Pick's theorem
(cf.~\cite[Theorem 2.8]{BR}) the Ehrhart polynomial of a lattice polytope can
be expressed as
\[
    E_P(n) \ = \ \vol_2(P) n^2 + b(P) n + \chi(P),
\]
where $b(P)$ was introduced in Example~\ref{ex:d2}.  In particular, the
coefficients $\vol_2, b, \chi$ are lattice-invariant, nonnegative and
homogeneous of
degrees $2,1,0$, respectively.

\begin{prop}\label{prop:nonnegdim2}
    The cone $\bValsN$ is the simplicial cone generated by $\vol_2, b$
and $\chi$.
\end{prop}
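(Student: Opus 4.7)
The plan is to show both inclusions explicitly, using that $\bVals(\Z^2,\R)$ has dimension $3$ by Betke--Kneser. First I would verify that $\vol_2$, $b$, and $\chi$ all lie in $\bValsN(\Z^2,\R)$: this is immediate, since $\vol_2$ and $\chi$ are manifestly nonnegative and $b$ is nonnegative as noted in Example~\ref{ex:d2} (it equals $\vol_1$ on low-dimensional polytopes and half the number of boundary lattice points on full-dimensional ones). Since these three valuations are homogeneous of different degrees $2$, $1$, $0$ respectively, they are linearly independent, and hence form a basis of $\bVals(\Z^2,\R)$ by Theorem~\ref{thm:betke_kneser}.

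The nontrivial direction is that any nonnegative lattice-invariant valuation $\phi$ on $\Pol{\Z^2}$ must be a nonnegative combination of $\vol_2$, $b$, and $\chi$. Write $\phi = \alpha\vol_2 + \beta b + \gamma \chi$ and test on a small family of polytopes to isolate the coefficients:
\begin{itemize}
    \item Evaluating at the point $\{0\}$ gives $\phi(\{0\}) = \gamma$, forcing $\gamma \ge 0$.
    \item Evaluating at the segment $n[0,e_1]$ gives $\phi(n[0,e_1]) = n\beta + \gamma$, which must be nonnegative for all $n \ge 1$; letting $n \to \infty$ forces $\beta \ge 0$.
    \item Evaluating at $nC_2$ where $C_2 = [0,1]^2$ gives $\phi(nC_2) = n^2\alpha + 2n\beta + \gamma$ (using $\vol_2(C_2) = 1$, $b(C_2) = 2$, $\chi(C_2) = 1$), and letting $n \to \infty$ then forces $\alpha \ge 0$.
\end{itemize}

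Since the three generators are linearly independent, the cone they span is simplicial. The main obstacle is really only bookkeeping: making sure the chosen test polytopes decouple the three coefficients in the correct order so that each sign conclusion follows from nonnegativity on polytopes of the appropriate dimension. No deeper structural tool beyond Pick's formula and Betke--Kneser is needed.
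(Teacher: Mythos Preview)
Your argument is correct. The paper does not actually supply a proof of this proposition: it states just before it that $\vol_2$, $b$, and $\chi$ are lattice-invariant, nonnegative, and homogeneous of degrees $2$, $1$, $0$, and then records the proposition as an evident consequence. Your write-up makes explicit exactly the reasoning the paper leaves implicit---nonnegativity gives one inclusion, and evaluating a general $\phi = \alpha\vol_2 + \beta b + \gamma\chi$ on dilates of polytopes of increasing dimension isolates the coefficients and forces $\alpha,\beta,\gamma \ge 0$.
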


From Theorem \ref{thm:bValsCP} we know that $\bValsCP$ is simplicial and
generated by 
\[
        \dvol = \vol_2+ b + \chi, \quad \vol_2, \quad 3 \vol_2 + b.
\]

Determining the cone of monotone valuations is harder since $b$, as opposed to
$\vol_2$ and $\chi$, is \emph{not} monotone; see Example~\ref{ex:d2}.

\begin{thm}\label{prop:monotonedim2}
    The cone $\bValsM$ is simplicial and generated by
    \[
    \chi, \quad  b + \vol_2, \quad  \vol_2.
    \]
\end{thm}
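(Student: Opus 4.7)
The plan is to establish both inclusions between $\bValsM(\Z^2,\R)$ and the cone $\mathcal{C}$ generated by $\chi$, $b + \vol_2$, and $\vol_2$. The inclusion $\mathcal{C} \subseteq \bValsM$ is the easy direction: $\chi$ only takes the values $0$ and $1$ (with $0$ on $\emptyset$) and so is trivially monotone; $\vol_2$ is monotone by Lebesgue monotonicity; and $b + \vol_2 = \dvol - \chi$ is monotone because between nonempty polytopes the $\chi$-term is constant while $\dvol$ is monotone by Corollary~\ref{cor:Ehr_NN}, and for $\emptyset \subseteq Q \neq \emptyset$ we simply have $\dvol(Q) - 1 \ge 0$.

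For the reverse inclusion $\bValsM \subseteq \mathcal{C}$, I write any lattice-invariant $\phi$ in the basis $\chi, b, \vol_2$ coming from Pick's theorem as $\phi = \alpha_0 \chi + \alpha_1 b + \alpha_2 \vol_2$. The three generators of $\mathcal{C}$ are linearly independent (the transition matrix from $\chi, b, \vol_2$ is upper triangular with unit diagonal), so $\mathcal{C}$ is simplicial, and the rewriting
\[
    \phi \ = \ \alpha_0 \chi + \alpha_1 (b + \vol_2) + (\alpha_2 - \alpha_1) \vol_2
\]
reduces the theorem to showing that monotonicity of $\phi$ forces $\alpha_0 \ge 0$, $\alpha_1 \ge 0$, and $\alpha_2 \ge \alpha_1$.

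The first two inequalities are read off from individual nested pairs: monotonicity on $\emptyset \subseteq \{0\}$ gives $\alpha_0 \ge 0$, and monotonicity on $\{0\} \subseteq [0,1]\times\{0\}$ gives $\alpha_1 \ge 0$. For the crucial inequality $\alpha_2 \ge \alpha_1$, I consider the family of nested pairs $S_N := [0,N] \times \{0\} \subseteq T_N := \conv((0,0),(N,0),(N,1))$. A direct calculation using $b(S_N) = N$ for the $1$-dimensional segment and $\vol_2(T_N) = N/2$, $b(T_N) = (N+2)/2$ for the triangle (by counting boundary lattice points and applying Pick's theorem) yields
\[
    \phi(T_N) - \phi(S_N) \ = \ \tfrac{N}{2}\alpha_2 - \tfrac{N-2}{2}\alpha_1.
\]
Monotonicity forces this to be nonnegative for every $N$, so $\alpha_2 \ge \tfrac{N-2}{N}\alpha_1$. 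Combined with $\alpha_1 \ge 0$ already proven, letting $N \to \infty$ yields $\alpha_2 \ge \alpha_1$.

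The main obstacle is exhibiting a witness for the facet inequality $\alpha_2 \ge \alpha_1$, since for any two strictly nested $2$-dimensional lattice polytopes $P \subsetneq Q$ the identity $\Delta b = \Delta \vol_2 - \Delta I$ from Pick's theorem together with $\Delta\dvol \ge 1$ prevents $(\Delta b, \Delta \vol_2)$ from lying on the extremal ray $(-1,1)$ of the dual cone. The key idea is to mix dimensions and use a $1$-dimensional segment inside a $2$-dimensional triangle, so that the ratio approaches $(-1,1)$ only in the limit $N \to \infty$.
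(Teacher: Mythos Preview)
Your proof is correct and follows the same strategy as the paper: write $\phi$ in the Pick basis $\chi, b, \vol_2$, verify the easy inclusion via $b + \vol_2 = \dvol - \chi$, and extract the facet inequality $\alpha_2 \ge \alpha_1$ from an asymptotic family of nested lattice polytopes. The only real difference is the choice of witness family: the paper uses the \emph{full-dimensional} pair $Q_n = [0,n]^2 \subset P_n = \conv(Q_n \cup \{(-1,-1)\})$, for which $\phi(P_n) - \phi(Q_n) = (\alpha_2 - \alpha_1)n + \alpha_1$, so your closing remark that one is forced to mix dimensions is not accurate --- a purely $2$-dimensional pair also approaches the extremal direction $(-1,1)$ in the limit, it just cannot hit it exactly.
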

\begin{proof}
    First we observe that $b + \vol_2 = \dvol - \chi$ and hence the given
    valuations are indeed monotone.
    
    Now, let $\phi =\alpha \vol_2 +\beta b + \gamma \chi$ be a monotone
    translation-invariant valuation. Since $\phi$ is monotone, we have
    $\alpha,\beta,\gamma \ge 0$. We can assume that $\gamma = 0$ as $\phi -
    \phi(0)$ is still monotone.  Let $Q_n = [0,n]^2$ be the $n$-th dilated unit
    square and set  $P_n = \conv(Q_n \cup \{(-1,-1)\})$. 
    \begin{eqnarray*}
        \phi(Q_n) & = & \alpha n^2 + 2 \beta n, \text{ and }\\
        \phi(P_n) & = & \alpha (n^2 + n)+\beta (n+1), 
    \end{eqnarray*}
    By monotonicity, we obtain
    \[
        0 \ \leq \ \phi(P_n) - \phi(Q_n) \ = \ (\alpha -  \beta)n + \beta
    \]
    for all $n\geq 0$ and thus $\alpha \geq  \beta$. The cone generated by
    the inequalities $\alpha \geq 0$, $\gamma \geq 0$ and $\alpha \geq \beta$
    is generated by the rays $\vol_2,  \vol _2 + b ,$ and $ \chi$.
\end{proof}

In the space $\bVals(\Z^2,\R) = \{ \alpha \vol_2 + \beta b + \gamma \chi :
\alpha,\beta,\gamma \in \R\}$, a cross-section of the cones with
\mbox{$\{\alpha +
\beta + \gamma = 1\}$} is given in Figure~\ref{fig:crosssec}.

\begin{figure}[H]
\centering
\begin{tikzpicture}[scale=5]
    \path[draw, fill=yellow!20] 
        (  1.0000,  0.00000) node[right] {$\vol_2$} --
        (-0.50000,  0.86603) node[above] {$b$} --
        (-0.50000, -0.86603) node[below] {$\chi$} -- cycle;
    \path (-0.1, 0.1) node[above left] {$\bValsN$};

    \path[draw, fill=green!20] 
        (  1.0000,  0.00000) -- 
        ( 0.25000,  0.43301) node[above right] {$\tfrac{1}{2} \vol_2 +
                                            \tfrac{1}{2}b$} -- 
        (-0.50000, -0.86603) -- cycle;
    \path (0.25000, -0.14434) node[below left] {$\bValsM$};

    \path[draw, fill=red!20] 
        (0.00000, 0.00000) node[left] {$\tfrac{1}{3}\dvol$} -- 
        ( 1.0000, 0.00000) -- 
        (0.62500, 0.21651) node[above right] {$\tfrac{3}{4}\vol_2
                        + \tfrac{1}{4}b$}
        -- cycle;
    \path (0.54167, 0.072169) node {$\bValsCP$};
    \end{tikzpicture}
\caption{Cross-section of the nested cones $\bValsCP \subset
\bValsM \subset \bValsN$ for $\Lambda = \Z^2$.}\label{fig:crosssec}
\end{figure}

It would be very interesting to see if a Hadwiger-type result can be given for
monotone or nonnegative valuations. In the language of cones, we conjecture
the following.
\begin{conj}
    The cones of lattice-invariant valuations $\phi: \Pol{\Z^d} \rightarrow
    \R$ that are monotone or respectively nonnegative are simplicial.
\end{conj}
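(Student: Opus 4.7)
The plan is to mirror the $d = 2$ analysis by producing, for each cone, an explicit list of $d + 1$ generating rays and verifying via test polytopes that these suffice. By Betke--Kneser (Theorem~\ref{thm:betke_kneser}), the evaluation map $\Psi : \phi \mapsto (\phi(\Delta_0), \ldots, \phi(\Delta_d))$ identifies $\bVals(\Z^d, \R)$ with $\R^{d+1}$, so simpliciality of $\bValsN(\Z^d,\R)$ or $\bValsM(\Z^d,\R)$ is equivalent to showing that the image of each cone under $\Psi$ is simplicial in $\R^{d+1}$.

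First I would fix candidate generators. The homogeneous Ehrhart basis $e_0,\ldots,e_d$ fails past $d = 2$ because the intermediate coefficients $e_i$ are not nonnegative in general (Reeve tetrahedra in $\Z^3$ witness $e_1 < 0$), so any generating set must mix homogeneities. A natural pool is the family of scaled discrete volumes $\dvol^{\ell}(P) := \dvol(\ell P)$ for $\ell = 1, \ldots, d+1$, shown to be linearly independent and combinatorially positive in the proof preceding Theorem~\ref{thm:bValsCP}; an appropriate basis change of these or a related family built from $f^*$-coefficients should yield the extremal rays. For $\bValsM$ the $d = 2$ case suggests the volume and Euler characteristic remain extremal, accompanied by $d-1$ further valuations interpolating between degrees that are monotone but not combinatorially positive.

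Next, one must verify that every $\phi \in \bValsN$ (respectively $\bValsM$) decomposes as a nonnegative combination of the chosen generators. Following the template of Theorem~\ref{prop:monotonedim2}, this requires producing, for each putative facet, a sequence of test polytopes witnessing the corresponding inequality. For $\bValsM$ one takes pairs $(P_n, Q_n)$ with $Q_n \subseteq P_n$ of fixed dimension and controlled asymptotic Ehrhart data, so that the requirement $\phi(P_n) \geq \phi(Q_n)$ for all $n$ forces a single linear constraint on the coefficients of $\phi$ in the chosen basis; for $\bValsN$, single polytope families suffice. Assembling these inequalities and checking that they cut out exactly the proposed simplicial cone is the technical heart.

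The principal obstacle is ruling out additional facets arising from exotic lattice polytopes. In $d = 2$ Pick's formula completely pins down the Ehrhart data of a lattice polygon, which keeps the facet structure finite; in higher dimensions there is no such classification, so a priori every new family of lattice polytopes could contribute a new essential constraint $\phi(P) \geq 0$. A plausible route around this is induction on $d$ combined with the face decomposition~\eqref{eqn:sumrelint}, expressing $\phi(P)$ as a sum of relatively-open-face contributions whose signs are inductively controlled via the lower-dimensional cones. Making this sign bookkeeping across all dimensions compatible with simpliciality of the full $(d{+}1)$-dimensional cone is where the real difficulty lies, and is presumably the reason the statement is posed as a conjecture rather than a theorem.
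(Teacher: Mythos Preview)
The statement you are addressing is a \emph{conjecture} in the paper, not a theorem; the paper offers no proof whatsoever, only the $d=2$ computation in Section~\ref{ssec:d2} as supporting evidence. There is therefore nothing to compare your proposal against.

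Your write-up is not a proof either, and you seem aware of this: the final paragraph correctly isolates the real obstruction. The outline up to that point is a reasonable plan of attack---fix candidate extremal rays, then manufacture test polytopes to certify each facet inequality, as done in Theorem~\ref{prop:monotonedim2}---but the core difficulty is exactly the one you name: in dimension $d\ge 3$ there is no classification of lattice polytopes analogous to Pick's theorem, so one cannot a priori bound the number of independent linear constraints that nonnegativity or monotonicity imposes. Your suggested induction on $d$ via the face decomposition~\eqref{eqn:sumrelint} does not obviously help, because the relative-interior contributions $\phi(\relint F)$ can have either sign for a merely nonnegative or monotone $\phi$ (that is the whole point of the distinction with combinatorial positivity), so no sign control propagates. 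In short, your proposal is an honest sketch of where one would start, but it does not close the gap, and the paper does not close it either.
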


In dimension $d=2$, it can also be observed that the cone of lattice-invariant
monotone valuations coincides with the cone of weakly $h^*$-monotone
valuations. Example~\ref{ex:notweaklyhmonotone} shows that this is not true
without the restriction to \emph{lattice-invariant} valuations. We do not
believe that these cones coincide in general.  However, we currently do not
have a counterexample.

\nocite{jochemko2015combinatorics}
\bibliographystyle{siam}
\bibliography{CombAspectsValuations}

\end{document}